\title[ ]{Cucker-Smale type flocking models on a sphere}
\author[Sun-Ho Choi]{Sun-Ho Choi}
\address[Sun-Ho Choi]{Department of Applied Mathematics and the Institute of Natural Sciences, Kyung Hee University, 1732 Deogyeong-daero, Giheung-gu, Yongin 17104, Republic of Korea}
\email{sunhochoi@khu.ac.kr}
\author[Dohyun Kwon]{Dohyun Kwon}
\address[Dohyun Kwon]{Department of Mathematics, University of Wisconsin-Madison, 480 Lincoln Dr., Madison, WI 53706, USA}
\email{dkwon7@wisc.edu}
\author[Hyowon Seo]{Hyowon Seo}
\address[Hyowon Seo]{Department of Applied Mathematics and the Institute of Natural Sciences, Kyung Hee University, Yongin, 446-701, South Korea}
\email{hyowseo@gmail.com}
\begin{document}

\newtheorem{theorem}{Theorem} [section]
\newtheorem{maintheorem}{Theorem}
\newtheorem{lemma}[theorem]{Lemma}
\newtheorem{proposition}[theorem]{Proposition}
\newtheorem{remark}[theorem]{Remark}
\newtheorem{example}[theorem]{Example}
\newtheorem{exercise}{Exercise}
\newtheorem{definition}{Definition}[section]
\newtheorem{corollary}[theorem]{Corollary}

%%%%%%%%%%%%%%%%%%%%%%%%%%%%%%%%%%%%%

\newcommand{\noi}{\noindent}
\newcommand{\Z}{\mathbb{Z}}
\newcommand{\R}{\mathbb{R}}
\newcommand{\C}{\mathbb{C}}
\newcommand{\T}{\mathbb{T}}
\newcommand{\bul}{\bullet}
\newcommand{\E}{\mathcal{E}}
\newcommand{\N}{\mathcal{N}}
\newcommand{\RR}{\mathcal{R}}
\newcommand{\D}{\mathcal{D}}
\newcommand{\HH}{\mathcal{H}}

\newcommand{\al}{\alpha}
\newcommand{\dl}{\delta}
\newcommand{\Dl}{\Delta}
\newcommand{\eps}{\varepsilon}
\newcommand{\kk}{\kappa}
\newcommand{\g}{\gamma}
\newcommand{\G}{\Gamma}
\newcommand{\ld}{\lambda}
\newcommand{\lam}{\lambda}
\newcommand{\Ld}{\Lambda}
\newcommand{\s}{\sigma}
\newcommand{\ft}{\widehat}
\newcommand{\wt}{\widetilde}
\newcommand{\cj}{\overline}
\newcommand{\dx}{\partial_x}
\newcommand{\dt}{\partial_t}
\newcommand{\dd}{\partial}
\newcommand{\invft}[1]{\overset{\vee}{#1}}
\newcommand{\lrarrow}{\leftrightarrow}
\newcommand{\embeds}{\hookrightarrow}
\newcommand{\LRA}{\Longrightarrow}
\newcommand{\LLA}{\Longleftarrow}

\newcommand{\wto}{\rightharpoonup}

%Japanese Bracket
\newcommand{\jb}[1]
{\langle #1 \rangle}

%color
%\newcommand{\dk}[1]{{\color{blue} #1  }}
\definecolor{caribbean green}{rgb}{0.0, 0.8, 0.6}
\newcommand{\dk}[1]{{\color{caribbean green}[#1]}}

%%%%%%%%%%%%%%%%%%%%%%%%%%%%%%%%%%

\renewcommand{\theequation}{\thesection.\arabic{equation}}
\renewcommand{\thetheorem}{\thesection.\arabic{theorem}}
\renewcommand{\thelemma}{\thesection.\arabic{lemma}}
\newcommand{\bbr}{\mathbb R}
\newcommand{\bbz}{\mathbb Z}
\newcommand{\bbn}{\mathbb N}
\newcommand{\bbs}{\mathbb S}
\newcommand{\bbp}{\mathbb P}
\newcommand{\ddiv}{\textrm{div}}
\newcommand{\bn}{\bf n}
\newcommand{\rr}[1]{\rho_{{#1}}}
\newcommand{\thh}{\theta}
\def\charf {\mbox{{\text 1}\kern-.24em {\text l}}}
\renewcommand{\arraystretch}{1.5}

\thanks{
%\textbf{Acknowledgment.}
}

\maketitle

\begin{abstract}
We present a Cucker-Smale (C-S) type flocking model on a sphere. We study velocity alignment on a sphere and prove the emergence of flocking for the proposed model. Our model includes three new terms: a centripetal force, multi-agent interactions on a sphere and inter-particle bonding forces. To compare velocity vectors on different tangent spaces, we introduce a rotation operator in our new interaction term. Due to the geometric restriction, the rotation operator is singular at antipodal points and the relative velocity between two agents located at these points is not well-defined. Based on an energy dissipation property of our model and a variation of Barbalat's lemma, we show the alignment of velocities for an admissible class of communication weight functions. In addition, for sufficiently large bonding forces we prove time-asymptotic flocking which includes the avoidance of antipodal points.

%a flocking estimate.
%
%we prove the flocking theorem for
%For the velocity alignment, we use an energy dissipation property of our model and a variation of Barbalat's lemma.
%The first term is related to a generic rule from the geometric restriction, which represents the centripetal force. The second one is an interaction term that comes from the modification of C-S interaction. %\dk{Shorten above two sentences?}
%and the singularity of the rotation operator, even if velocities are aligned
%\dk{Can we say that our velocities are aligned?}
%the velocity alignment does not guarantee the formation of a group
%even if the flocking estimate is derived,
%we cannot control the position difference between two agents.
%We also provide a sufficient condition of initial data to obtain a flocking estimate.
%Therefore, we need the sufficient condition such that the initial data is located in an almost flat surface to obtain the flocking estimate.
%Our proposed model does not have Galilean invariant and the conservation of total momentum and energy.

%We construct the Lyapunov functional, which has been used  in previous articles for C-S models. Instead, we provide a new approach to obtain a flocking theorem for our proposed model.

%from a point to its antipodal one
% and the relative velocity between two antipodal points is not well
% and the singularity of the rotation operator,
%momentum is not conserved and an energy dissipation property itself does not guarantee the emergence of flocking.
%, the rotation operator is singular.

%Based on
\end{abstract}

%\tableofcontents

%%%%%%%%%%%%%%%%%%%%%%%%%%%%%%%%%%%%%%%%%%%%%%%%%%%%%%%%%%%%%%%%%%%%%%%%%%%%%%%%%%%%%%%%%%%%%%%%%%%%%%%
%
%  Section
%
%%%%%%%%%%%%%%%%%%%%%%%%%%%%%%%%%%%%%%%%%%%%%%%%%%%%%%%%%%%%%%%%%%%%%%%%%%%%%%%%%%%%%%%%%%%%%%%%%%%%%%
\section{Introduction}%\label{Sec introduction}
\setcounter{equation}{0}

Collective behaviors are ubiquitous phenomena in nature. % All activities of life phenomena can be referred to as collective behaviors in a broad sense.
Many organisms employ collective behaviors to survive in nature: examples include the aggregation of bacteria, the flocking of birds and fish schools. Recently, these phenomena have been intensively studied in engineering communities due to their applications. In engineering, a model with a constant speed is preferred for practical reasons and control problems are often addressed. For instance, in \cite{J-K1,J-K2}, Justh and Krishnaprasad considered a unit speed particle model satisfying a Frenet-Serret equation with curvature control. Generalizations of a discrete time Vicsek model with leadership and without leadership were discussed in \cite{J-L-M}.  A planar model that has a gyroscopic steering force with unit speed was studied in \cite{P-L-S}. Their coupling depends on the position and angle of velocity. Leonard et al. designed a particle motion control to collect information in \cite{L-P-L}. They focused on developing and solving an optimal control problem for cost functions.

In the mathematics and physics communities, many researchers  have also studied these phenomena in various perspectives and in various forms.  For examples, Topaz and Bertozzi considered a fluid type model describing social interaction in two spatial dimensions and studied swarming patterns in \cite{T-B}. Their model contains a nonlocal velocity alignment term. In \cite{F-E}, Fetecau and Eftimie presented a discrete velocity model with a nonlocal force and turning rate. They considered the global existence and aggregation phenomena. Especially,  after the mathematical model of Winfree and Kuramoto for collective dynamics \cite{Ku1,Ku2,Wi}, many researchers have proposed  agent-based models to study the emergent behaviors analytically and numerically. Vicsek et al. in \cite{V-C-B-C-S} proposed a second-order flocking model with discrete time scheme, the so-called  Vicsek model. In this model, all agents in the system  have a constant speed. We refer to \cite{D-M1, E-E-M, T-T} for  noise and position dependent force, derivation of a macroscopic model with a diffusion coefficient, and a statistical view point, respectively.

In this paper, we continue to study the flocking model from Cucker and Smale \cite{C-S2}, which is a kind of $N$-body system.  Cucker and Smale \cite{C-S2} introduced a system of ordinary differential equations such that acceleration is  described by  weighted internal relaxation forces:
\begin{align}
\begin{aligned}\label{C-S}
\frac{dx_i}{dt} &= v_i, \\
\frac{dv_i}{dt} &= \sum_{j=1}^{N} \frac{\psi_{ij}}{N}(v_j - v_i),
\end{aligned}
\end{align}
where $x_i$ and $v_i$ are the position and velocity in $\bbr^d$ of the $i$th agent, respectively, and $\psi_{ij}$ is the communication rate between the $i$th and $j$th agents. Cucker and Smale \cite{C-S2} also provided sufficient conditions for initial data leading to flocking configuration. We refer to \cite{A-C-H-L,C-F-R-T,H-L,H-T,M-T,M-T2} for results related to C-S models.

While the original C-S model \eqref{C-S} describes  $N$-body agents in $\bbr^d$, collective behaviors can occur on other manifolds. For example, all living organisms in nature are located in the Earth and the geometry of the Earth may not be negligible in  long-distance travel.
%Prior to our paper, there have been studies in manifolds for various models.
We provide a brief historical remark of flocking dynamics on manifolds.
%Before the C-S model, there are various equations describing flocking phenomena.
The Kuramoto model in \cite{Ku1, Ku2} is  a first-order differential equation type model, and the oscillators are arranged in  $\mathbb{S}^1$. Lohe generalized the Kuramoto model to a matrix model in \cite{Lo-1, Lo-2}.  From the matrix model, Lohe derived a dynamical model in $\mathbb{S}^4$ that was inspired by quantum information theory. The ellipsoid model was studied in \cite{zhu}. Vicsek and his collaborators in \cite{V-C-B-C-S} considered the second-order discrete time model with $v\in \mathbb{S}^1$. See \cite{H-J-K} for the continuous time model and \cite{C-H} for general dimension cases.
%all agents in nature are located on a sphere after scaling out.
%This is the reason why we consider other manifold such as a unit sphere.
%The main issue in this paper is the derivation of Cucker-Smale type flocking model on a unit sphere.
%Whenever we derive some model on a sphere the geometric effect of background environment may not be negligible.

The main objective of this paper is to derive a C-S type flocking model on a unit sphere. Additionally, we need a new definition of flocking for the model to describe the flocking phenomena on a sphere. The main difficulty comes from analysis of  the velocity difference, a so-called  relative velocity, on a manifold. The concept of a relative velocity on a manifold has been well-developed and widely used in general relativity (See \cite{Car97,McG03,Sch85,Tal08}). The relative velocity can be considered as parallel transport along geodesics (See \cite[Equation (3.109)]{Car97}). On a sphere, a geodesic is a part of a great circle and a parallel transport along a great circle is characterized by a rotation matrix, given in Definition~\ref{mat rot}.
%However, the new definition should be admissible in some sense.  By means of counterpart on the other manifold, our definition has to follow the original definition of flocking from the C-S model.
Furthermore, as our domain is not a Euclidean domain but a unit sphere, %the Lyapunov functional approach applied in the previous articles for the C-S model is not available. Unfortunately,
the classical Lyapunov functional approach which provides an exponential convergence rate in the previous articles cannot be applied directly. Instead, we use an integrable system property to obtain the flocking result.

%The C-S model is a Newton type agent based model.
The model proposed in this paper is an agent-based Newton's equation type model as the original C-S model. We assume that each particle has unit mass. Consider an ensemble of agents on a unit sphere. Let $x_i, v_i \in \bbr^3$ be the position and momentum of the $i$th agent respectively. Note that $x_i$ is a unit vector corresponding to the position on the unit sphere.
Because of the geometric restrictions, we need an extra term to conserve the modulus of $x_i$,  called the centripetal force. In the original C-S model, the sum of velocity differences between two particles is considered as acceleration. Since the surface is not flat in our model, we cannot obtain a relative velocity between the other two agents by a simple vector difference. More precisely, if we simply take the difference $v_i-v_j$ between $i$th and $j$th agents as the relative velocity, then %it cannot perform as acceleration because
it may not be contained in the tangent space of the given position vector $x_i$ at time $t>0$. Therefore, we need a new interaction rule between the two velocities $v_i$ and $v_j$. %Moreover, the solution operator of the equation is needed to preserves the modulus of $x_i$. Actually, this is highly related to the connecting rule between two tangential spaces of positions. %In our paper, we consider natural choice among these operators. %For the detailed, see Section 2.

The result of this paper are threefold. First, we derive a C-S type system on a unit sphere, based on the original C-S model \eqref{C-S}, by using the centripetal force, a rotation operator, and an inter-particle bonding force. The following system of first-order ordinary differential equations is a counterpart of the C-S model on  the unit sphere: for $x_i, v_i\in \bbr^3$ and $i =1, \ldots, N$,
\begin{align}
\begin{aligned}\label{main}
\dot{x}_i&=v_i,\\
\dot{v}_i&=-\frac{\|v_i\|^2}{\|x_i\|^2}x_i+\sum_{j=1}^N\frac{\psi_{ij}}{N}\big(R_{x_j \shortrightarrow x_i}(v_j)-v_i\big)+\sum_{k=1}^N \frac{\sigma }{N}(\|x_i\|^2x_k - \langle x_i,x_k \rangle  x_i),
\end{aligned}
\end{align}
where $\psi_{ij}=\psi(\|x_i - x_j\|)$ is the communication rate and $\sigma$ is the inter-particle bonding parameter. Here, $R_{x_1\shortrightarrow x_2}(y)$ is the rotation operator, which consists of a matrix $R(x_1, x_2)$ and its matrix product (See Definition \ref{mat rot}). The detailed definition and properties of the operator $R_{x_1\shortrightarrow x_2}(y)$ are  discussed in Section 2.  We refer to \cite{D-S,M-P-T} for C-S models on $\mathbb{T}^d$.

%It can be shown that the relative velocity of $x_j$ with respect to $x_i$ on a sphere is given by $R_{x_1\shortrightarrow x_2}(y)$.

\begin{figure}[h]
  \centering
{\small
\begin{minipage}[t]{5cm}
\centering
     \includegraphics[width=5cm]{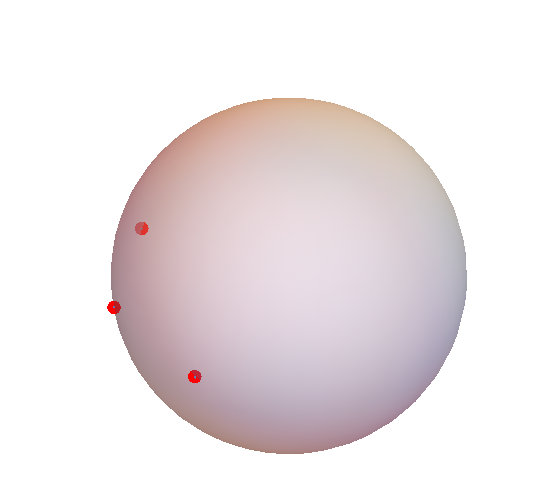}\newline
(a) $t=0$
\end{minipage}
\begin{minipage}[t]{5cm}
\centering
     \includegraphics[width=5cm]{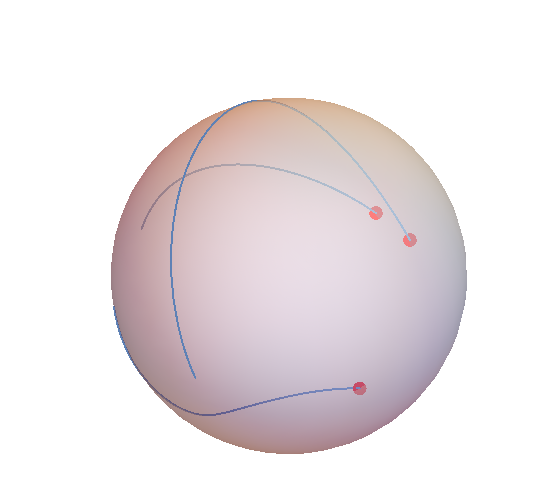}\newline
(b) $t=1$
\end{minipage}
\begin{minipage}[t]{5cm}
\centering
     \includegraphics[width=5cm]{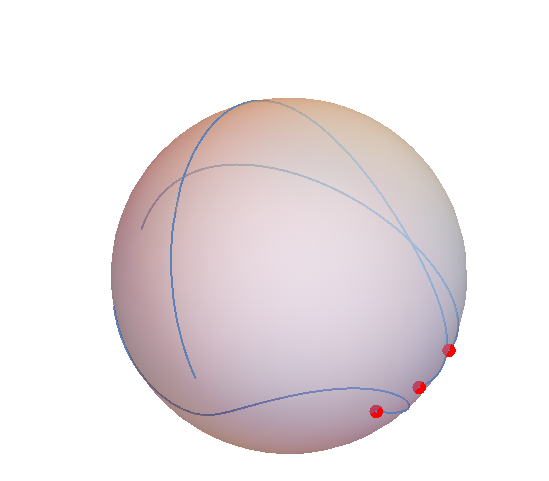}\newline
(c) $t=2$
\end{minipage}

%\caption{Pulse type traveling wave with $\kappa(m)=m^{1/2}$ }
%\label{fig1}

%\bigskip
\begin{minipage}[t]{5cm}
\centering
     \includegraphics[width=5cm]{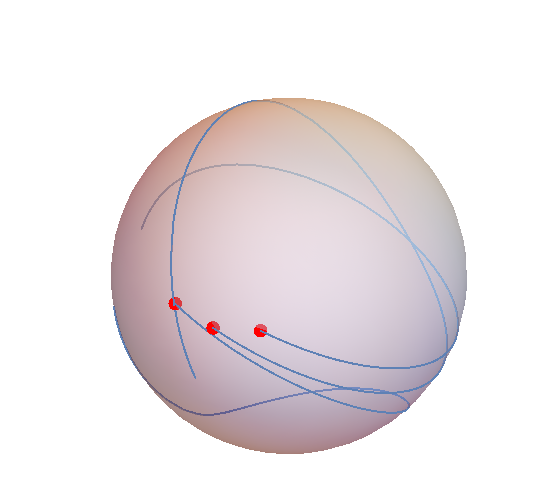}\newline
(d) $t=3$
\end{minipage}
\begin{minipage}[t]{5cm}
\centering
     \includegraphics[width=5cm]{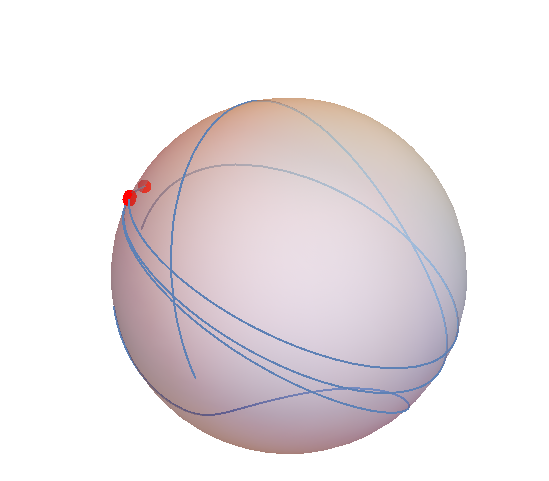}\newline
(e) $t=4$
\end{minipage}
\begin{minipage}[t]{5cm}
\centering
     \includegraphics[width=5cm]{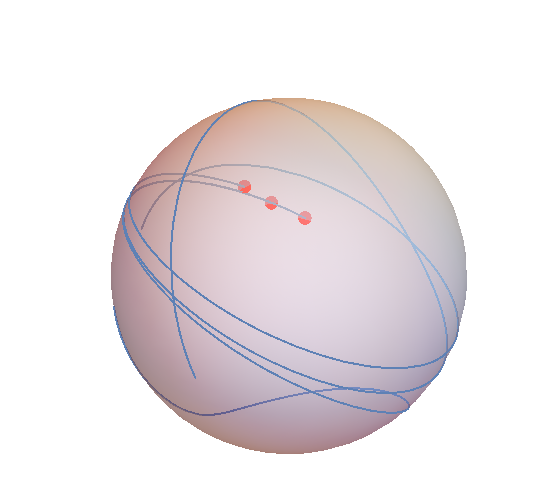}\newline
(f) $t=5$
\end{minipage}

\begin{minipage}[t]{5cm}
\centering
     \includegraphics[width=5cm]{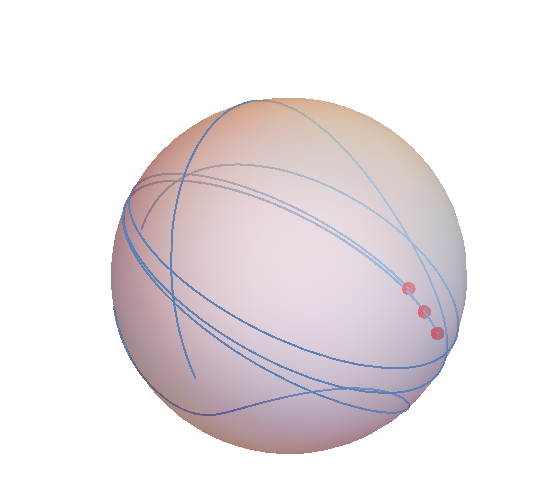}\newline
(g) $t=6$
\end{minipage}
\begin{minipage}[t]{5cm}
\centering
     \includegraphics[width=5cm]{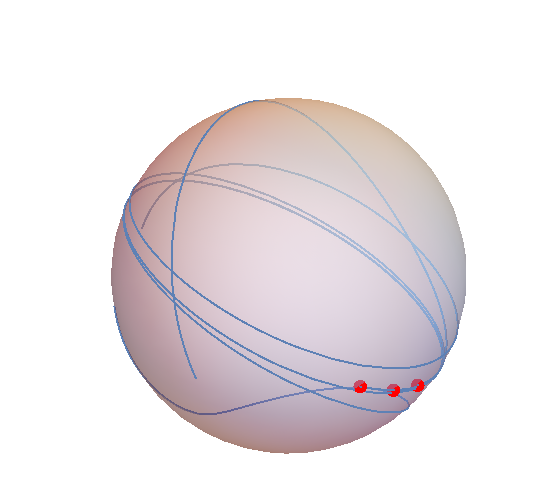}\newline
(h) $t=7$
\end{minipage}
\begin{minipage}[t]{5cm}
\centering
     \includegraphics[width=5cm]{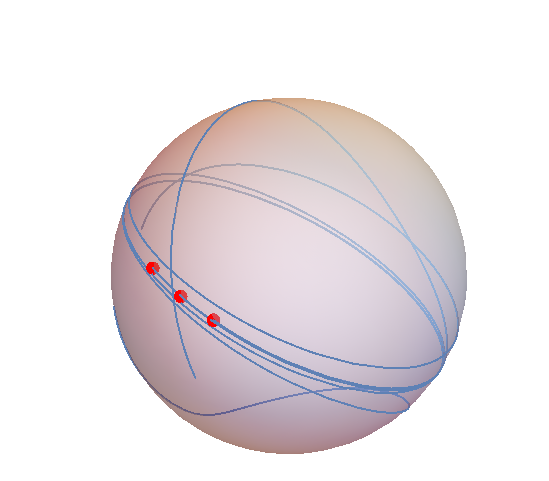}\newline
(i) $t=8$
\end{minipage}

%\begin{minipage}[t]{7cm}
%\centering
%     \includegraphics[width=6cm]{fig2c.eps}\newline
%(c) $c=\sqrt{2},\ \xi<0$
%\end{minipage}
%\begin{minipage}[t]{7cm}
%\centering
%     \includegraphics[width=6cm]{fig2d.eps}\newline
%(d) $c=\sqrt{0.5},\ \xi<0$
%\end{minipage}
%\caption{Pulse type traveling wave with $\kappa(m)=m^{2}$ }
%\label{fig2}
%
%\bigskip
%\begin{minipage}[t]{7cm}
%\centering
%     \includegraphics[width=6cm]{fig3a.eps}\newline
%(a) $c=\sqrt{2}$
%\end{minipage}
%\begin{minipage}[t]{7cm}
%\centering
%     \includegraphics[width=6cm]{fig3b.eps}\newline
%(b) $c=\sqrt{0.5}$
%\end{minipage}
%\caption{Front type traveling wave with $\kappa(m)=m^{3}$ }
%\label{fig3}
%
%\bigskip
%\begin{minipage}[t]{7cm}
%\centering
%     \includegraphics[width=6cm]{fig4a.eps}\newline
%(a) $c=\sqrt{2}$
%\end{minipage}
%\begin{minipage}[t]{7cm}
%\centering
%     \includegraphics[width=6cm]{fig4b.eps}\newline
%(b) $c=\sqrt{0.5}$
%\end{minipage}
\caption{Numerical experiment of flocking on a unit sphere for three particles.}
\label{fig1}
}
\end{figure}

%Note that the usual C-S type models consider the communication rate to be $\frac{\psi_{ij}}{N}=\psi(\|x_j - x_i\|)$ for some decreasing function $\psi$

%we consider that agents in the near position are affected a lot and agents in the distance are less affected.

In the original C-S model \cite{C-S2}, the communication rate $\psi_{ij}=\psi(\|x_i-x_j\|)$ quantifies how the agents affect each other and $\psi$ is a decreasing function of distance $\|x_i-x_j\|$ between two agents $x_i$ and $x_j$. The main concern in system \eqref{main} is to determine $\psi_{ij}$ when two points $x_i$ and $x_j$ in $\mathbb{S}^2$ are antipodal. As there are infinitely many geodesics connecting two antipodal points in $\mathbb{S}^2$, it is unclear what effect the corresponding antipodal point has. Rather, it is natural to assume that the influence of one agent on another agent is negligible if their positions are antipodal. To illustrate this, we assume that $\psi_{ij}=\psi(\|x_j - x_i\|)$ and
\begin{align}
\label{eqn:psi}
\hbox{ a decreasing $C^1$ function } \psi : [0,2] \to [0, +\infty) \hbox{ satisfies } \psi(2) = 0 \hbox { and } \psi'(2)<0.
\end{align}
%\dk{``strictly'' is removed}

Second, we develop flocking on a unit sphere as follows.
\begin{definition}
\label{def:flo}
A system has time-asymptotic flocking on a sphere in $\R^3$ if and only if a solution $(x_i,v_i)_{i=1}^N $ of the system satisfies the following condition:
\begin{itemize}
\item (velocity alignment) the relative velocity on the unit sphere goes to zero time-asymptotically:
\begin{align}\nonumber
\lim_{t \rightarrow \infty} \max_{1\leq i,j\leq N} \|x_i(t) + x_j(t)\| \|  R _{x_j(t)\shortrightarrow x_i(t)}(v_j(t)) - v_i(t) \| = 0.
\end{align}

\item (antipodal points avoidance) any two agent are not located at the antipodal points:
\[\liminf_{t\geq 0}\min_{1\leq i,j\leq N}\|x_i(t)+x_j(t)\|>0.\]
\end{itemize}
%\dk{Use the same definition in this paper and next paper}
\end{definition}

It is worth noting that Definition~\ref{def:flo} has the avoidance of antipodal points while the boundedness of position fluctuations is included in the original definition of flocking in $\bbr^3$. As the domain is compact in our case, the boundedness condition for the position difference $x_i-x_j$ does not guarantee the formation of a group. Instead, the avoidance of antipodal points is required because antipodal points are as far away from each other as possible.

The rotation operator from one point to its antipodal point cannot be defined (See Remark~\ref{rem:22}(2)). For this reason, the relative velocity will be only considered if  points are not antipodal. In fact, the conditions below
\begin{align}\nonumber
\lim_{t \rightarrow +\infty} \|x_i(t) + x_j(t) \| \|  R _{x_j(t)\shortrightarrow x_i(t)}(v_j(t)) - v_i(t) \|^k = 0
\end{align}
are equivalent for any $k >0$ as long as $v_i$ and $v_j$ are uniformly bounded in time (See Lemma~\ref{lem:equ}). The simplest example of flocking on a sphere is a set of unit speed circular motions.
\begin{example}
\label{ex:1}
For $t \geq 0$ and $1 \leq i \leq N$, consider
\begin{align}
\label{eqn:ex1}
x_i(t) := (\cos(t+\alpha_i), \sin(t+\alpha_i), 0), \quad v_i(t) := (-\sin(t+\alpha_i), \cos(t+\alpha_i), 0) \hbox{ and } 0 \leq \alpha_i < \pi.
\end{align}
By direct computation, it holds that for all $t \geq 0$ and $1 \leq i \leq N$,
\begin{align}
\nonumber %\label{eqn:2ex1}
R _{x_j(t)\shortrightarrow x_i(t)}(y) =
\left(\begin{matrix}
\cos(\alpha_i - \alpha_j) & -\sin(\alpha_i - \alpha_j) &0  \\
\sin(\alpha_i - \alpha_j) & \cos(\alpha_i - \alpha_j)  &0 \\
0 & 0 & 1
\end{matrix}\right)
y.
\end{align}
Thus, we have $R _{x_j(t)\shortrightarrow x_i(t)} (v_j(t)) = v_i(t)$ for all $t \geq 0$, and $(x_i,v_i)_{i=1}^N$ given in \eqref{eqn:ex1} satisfies the conditions in Definition~\ref{def:flo}.
\end{example}

Third, we provide the global-in-time existence and  flocking result for an admissible class of communication weight functions. To obtain the flocking estimate for the solution to \eqref{main}, we consider the following total energy functional $\E$ given by the sum of the kinetic energy $\E_K$ and configuration energy $\E_C$ motivated by \cite{PKH10}: for a given ensemble $(x_i,v_i)_{i=1}^N$,  we define energy functional $\E(x(t),v(t))$ such as
\begin{align}
\label{eqn:e}
\E:= \E_K + \E_C, \quad \E_K(t):= \frac{1}{N}\sum_{k=1}^N\|v_k(t)\|^2, \quad  \E_C(t) := \frac{\sigma}{2N^2 } \sum_{k,l=1}^N \| x_k(t) - x_l(t) \|^2.
\end{align}
If the bonding force rate $\sigma$ is large enough comparing the differences of agents' velocities and positions, then the following flocking result holds. What follows is a summary of our results from Theorem~\ref{thm:wel}, Theorem~\ref{thm:43} and Theorem ~\ref{thm 4.8}.
\begin{maintheorem}
\label{thm:0}
For $\psi$ satisfying \eqref{eqn:psi} and $\sigma \geq 0$, there exists a unique global solution to \eqref{main} and the system in \eqref{main} has the velocity alignment on a sphere.
Moreover, for $\sigma > N^2\E(0)/2$, the solution to \eqref{main} has time-asymptotic flocking on a unit sphere. Here, $\E(0)$ is the initial energy of the system given in \eqref{eqn:e}.
\end{maintheorem}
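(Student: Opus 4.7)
The statement combines three claims---global well-posedness, unconditional velocity alignment, and antipodal avoidance when $\sigma$ is large---so my plan is to address them in that order. A preliminary remark is that the centripetal term is exactly what preserves the geometric constraints: if one starts with $\|x_i(0)\|=1$ and $v_i(0)\cdot x_i(0)=0$, then differentiating $\|x_i\|^2$ and $v_i\cdot x_i$ and using that both $R_{x_j\shortrightarrow x_i}(v_j)$ and the bonding vector lie in $T_{x_i}\bbs^2$ (inspection from Definition~\ref{mat rot}) yields a closed linear identity for $v_i\cdot x_i$ that Gr\"onwall propagates to all times. The only remaining source of non-smoothness in the RHS of \eqref{main} is the rotation singularity at $x_j=-x_i$; but the coefficient $\psi_{ij}=\psi(\|x_i-x_j\|)$ vanishes there, and the first-order information $\psi'(2)<0$ in \eqref{eqn:psi}, combined with $\|x_i-x_j\|^2+\|x_i+x_j\|^2=4$, shows that this vanishing occurs at exactly the rate needed to cancel the blow-up of $R_{x_j\shortrightarrow x_i}$. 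The RHS then extends as a locally Lipschitz field on the full product sphere, so local existence is standard and the energy identity below provides the a priori bound on $\E_K$ giving global existence.

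For the velocity-alignment assertion I will work with the energy $\E=\E_K+\E_C$ from \eqref{eqn:e}. Along solutions, the centripetal piece is orthogonal to $v_i$ and drops out, while a direct computation shows that the bonding contribution to $\dot\E_K$ equals $+\frac{2\sigma}{N^2}\sum_{i,k}v_i\cdot x_k$ after using $\|x_i\|=1$ and $v_i\cdot x_i=0$, and the same quantity appears as $-\dot\E_C$ via $x_k\cdot v_k=0$ and the symmetry $k\leftrightarrow l$; the two cancel. Symmetrizing the rotation interaction in $(i,j)$, and using that $R_{x_i\shortrightarrow x_j}$ is the inverse (hence transpose) of $R_{x_j\shortrightarrow x_i}$ together with the polarization identity $\|R(v_j)-v_i\|^2=\|v_j\|^2+\|v_i\|^2-2\,v_i\cdot R(v_j)$, I expect to obtain
\begin{equation*}
\frac{d}{dt}\E(t) \;=\; -\,\frac{1}{N^{2}}\sum_{i,j=1}^{N}\psi_{ij}\,\bigl\|R_{x_j\shortrightarrow x_i}(v_j)-v_i\bigr\|^{2}.
\end{equation*}
Integrating on $[0,\infty)$ yields an integrable nonnegative quantity; the promised variation of Barbalat's lemma---needed because uniform continuity of the integrand near antipodal configurations is delicate---then forces $\psi_{ij}\|R_{x_j\shortrightarrow x_i}(v_j)-v_i\|^{2}\to 0$. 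By the same $\psi'(2)<0$ argument as in the first paragraph, $\psi_{ij}$ is comparable to $\|x_i+x_j\|^2$ near antipodal pairs, so this upgrades to $\|x_i+x_j\|\,\|R_{x_j\shortrightarrow x_i}(v_j)-v_i\|\to 0$, which is Definition~\ref{def:flo}(i).

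Finally, for $\sigma>N^2\E(0)/2$ the monotonicity $\dot\E\le 0$ yields $\E_C(t)\le\E(0)$, hence for each pair $(k,l)$,
\begin{equation*}
\|x_k(t)-x_l(t)\|^{2} \;\le\; \sum_{m,n=1}^{N}\|x_m(t)-x_n(t)\|^{2} \;=\; \frac{2N^{2}}{\sigma}\E_C(t) \;\le\; \frac{2N^{2}}{\sigma}\E(0) \;<\; 4.
\end{equation*}
Since $\|x_k(t)+x_l(t)\|^2=4-\|x_k(t)-x_l(t)\|^2$, this stays uniformly bounded away from zero, which is Definition~\ref{def:flo}(ii); combined with the previous paragraph, this is time-asymptotic flocking. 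The main obstacle throughout is the rotation singularity at antipodal pairs: it intervenes both in proving the RHS is Lipschitz and in upgrading the $L^{2}$-in-time dissipation to the weighted alignment limit, and in both places is controlled solely by the first-order vanishing $\psi(2)=0$, $\psi'(2)<0$ of the communication weight.
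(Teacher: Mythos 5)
Your proposal is correct and follows essentially the same route as the paper: preservation of the sphere constraints plus a Lipschitz extension of $\psi_{ij}R_{x_j\shortrightarrow x_i}$ across antipodal pairs for well-posedness (Lemma~\ref{lem:con}), the energy dissipation identity \eqref{eqn:prop26} combined with the Barbalat-type Lemma~\ref{lemma 3.3} and the two-sided comparison $\psi(\|x-y\|)\sim\|x+y\|^2$ of Lemma~\ref{lem:psi} for velocity alignment, and the bound $\|x_k-x_l\|^2\le 2N^2\E(0)/\sigma<4$ for antipodal avoidance. The only parts you leave as assertions are exactly the paper's technical lemmas (in particular the estimate $\|dR/dt\|\lesssim \mathcal{V}_{max}/\|x_i+x_j\|$ of Lemma~\ref{lemma 4.3} and its compensation by $\psi_{ij}$ in the $K_2^{ij}$ bound), but you correctly identify where they are needed and what controls them.
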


We note that it remains open to show the emergence of flocking for $\sigma = 0$ and the complete position flocking for $\sigma>0$. For the original C-S model defined in a flat space, the a priori assumption that the spatial diameter of agents is uniformly bounded yields an exponential decay for the maximum velocity differences between agents. Conversely, the exponential decay of the maximum velocity differences also leads to the uniform boundedness of the position difference. From this a priori estimate argument, the emergence of the flocking for the original C-S model is attained. However, this standard methodology is not applicable  to our model on the sphere.

The main difficulty comes from estimating the position difference between two agents. We expect that the exponential decay of the relative velocity $\|R_{x_j \shortrightarrow x_i}(v_j)-v_i\|$ yields the boundedness of the position difference. However, in our case since the sphere is a curved space, $R_{x_j \shortrightarrow x_i}$ depends on agents' location. Thus, unlike the original C-S model, it is not clear whether there is a Gronwall-type dissipative differential inequality for the relative velocity, even assuming a priori that the position difference at $t=0$ is small.

The rest of this paper is organized as follows. In Section 2, we present a derivation of the C-S type model \eqref{main} on the unit sphere from the original C-S model. In Section 3, we provide the global well-posedness of the solution to the derived model \eqref{main}. In Section 4, we prove the asymptotic flocking theorem for the system.  Finally, Section 5 is devoted to the summary of our main results.  \newline

{\bf Notation:} For given $z\in \bbr^3$, we use the symbols  $\|z\|$ and  $\|z\|_{\infty}$ to denote the $\ell_2$-norm and $\ell_{\infty}$-norm, respectively. For three-dimensional vectors $y$ and $z$, we denote the standard inner product between $y$ and $z$ as $\langle y,z \rangle$.\newline

\section{Motivation and derivation of the C-S type model on a sphere}
\setcounter{equation}{0}
\subsection{Derivation}
In this section, we present a derivation of the C-S type flocking model on a sphere. After normalization, we can assume that the domain is a unit sphere:
\begin{align*}
\mathbb{S}^2=\{(a_1,a_2,a_3)\in \bbr^3: a_1^2+a_2^2+a_3^2=1 \}.
\end{align*}
We first consider the classical form of the C-S model in $\bbr^3$.
\begin{align*}
\dot{x}_i&=v_i,\\
\dot{v}_i&=\sum_{j=1}^N \frac{\psi_{ij}}{N}(v_j-v_i).
\end{align*}
Here, $x_i$ and $v_i$ represent the position and velocity of the $i$th agent, respectively, and $\psi_{ij}=\psi(\|x_i-x_j\|)$ is the communication rate or weight function for the interaction between $i$th and $j$th agents. In the original C-S model, the communication rate $\psi$ has a key role in the emergence of flocking as a control parameter. Note that  to obtain global flocking result in the original C-S model, the power of denominator is important, i.e., if $\psi(x)=1/(1+|x|^2)^{\beta/2}$, the following statement holds.

\begin{enumerate}
\item $\beta\in [0,1]$, unconditional flocking occurs,
\item $\beta\in (1,\infty)$, conditional flocking occurs.
\end{enumerate}
%Here, controlling communication range between agents is important  in the flocking dynamics due to that base space is $\bbr^3$.
%On the other hand,  the base geometry in our case is a compact manifold, so the order of the denominator of the communication rate function is not crucial.
\begin{remark}
Unlike in the case of $\bbr^3$, the flocking model on the unit sphere does not require the long-range communications for unconditional flocking. On the other hand, since it has singularity in the antipodal positions, a vanishing condition in the antipodal positions such as \eqref{eqn:psi} is required.
\end{remark}

In the following, we construct a C-S type flocking model on a unit sphere that  shares the same structure as the original C-S model, including the communication rate and velocity difference. Essentially, the C-S flocking model consists of  three components: (1) the classic relation between the position $x_i$ and velocity $v_i$ in the first equation, $\dot{x}_i = v_i$, (2) the velocity difference $v_j-v_i$ in the second equation, and (3) the communication rate $\psi_{ij}$.

For the first trial, we fix the first two %terms
components and we try to find $\psi_{ij}$ to conserve the modulus of $x_i$. To make dynamics on the sphere, we need to obtain
\begin{align}\label{eq 1.0}
\|x_i\|\equiv 1,\quad \mbox{for all time}~t>0.
\end{align}
If we have compatible initial data $x_i(0)$ and $v_i(0)$, an equivalent relation to the above is
\begin{align}\label{eq 1.05}
\langle x_i,v_i\rangle \equiv 0,\quad \mbox{for all time}~t>0.
\end{align}
Under this compatible initial condition, we can also obtain another equivalent relation as follows:\phantom{\eqref{eq 1.05}}
\begin{align}
\label{equiv rel}
\langle x_i(t),\dot{v}_i(t)\rangle +\|v_i(t)\|^2\equiv 0,\quad \mbox{for all time}~t>0,
\end{align}
and
\begin{align}
\label{equiv rel1}
 \langle x_i(0),v_i(0)\rangle=0.
\end{align}

For the details, see Proposition \ref{prop 3.1}.

\begin{proposition}Let $\psi_{ij}$ be  scalar functions depending on $\{x_1, \cdots, x_N\}$ for all $i,j\in \{1,\ldots,N\}$. Then a solution to the C-S flocking model \eqref{C-S} with communication rate $\psi_{ij}$ lying on the unit sphere does not exist.
\end{proposition}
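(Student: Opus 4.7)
The plan is to reduce the invariance requirement $\|x_i(t)\|\equiv 1$ to a pointwise algebraic identity and then exhibit an admissible initial configuration that violates it. Suppose, for contradiction, that there exist scalar weights $\psi_{ij}(x_1,\dots,x_N)$ for which the unit sphere is invariant under \eqref{C-S}: every initial configuration with $x_i(0)\in\mathbb{S}^2$ and the necessary compatibility $\langle x_i(0),v_i(0)\rangle=0$ produces a solution satisfying $\|x_i(t)\|\equiv 1$ for all $t\geq 0$ and every $i$.

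By the equivalence to be established in the forthcoming Proposition~\ref{prop 3.1}, this constraint is equivalent to \eqref{equiv rel}, i.e.
\[
\langle x_i(t),\dot v_i(t)\rangle + \|v_i(t)\|^2 = 0,\qquad i=1,\dots,N,\ t\geq 0.
\]
Substituting the second equation of \eqref{C-S} and using $\langle x_i,v_i\rangle=0$, this reduces to
\[
\|v_i\|^2 \;=\; -\frac{1}{N}\sum_{j=1}^{N}\psi_{ij}\,\langle x_i,v_j\rangle,\qquad i=1,\dots,N,
\]
which, evaluated at $t=0$, must hold for \emph{every} admissible configuration $(x_j,v_j)_{j=1}^N$ with $x_j\in\mathbb{S}^2$ and $v_j\perp x_j$.

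The obstruction is now immediate. Fix any index $i$, choose any $x_i\in\mathbb{S}^2$ and any nonzero $v_i\perp x_i$, pick the remaining positions $x_j$ ($j\neq i$) arbitrarily on the sphere, and set $v_j=0$ for every $j\neq i$. This is an admissible initial configuration, yet since $\langle x_i,v_i\rangle=0$ every term on the right-hand side of the displayed identity vanishes, while the left-hand side equals $\|v_i\|^2>0$. The contradiction shows that no choice of scalar weights $\psi_{ij}$, regardless of how it depends on the positions, can render $\mathbb{S}^2$ invariant under the original C-S dynamics.

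The only potentially delicate step is the first one, the equivalence between $\|x_i\|\equiv 1$ (under the compatibility \eqref{equiv rel1}) and the derivative identity \eqref{equiv rel}; this is an elementary ODE/Gronwall computation for the scalar $\langle x_i,v_i\rangle$ which is deferred to Proposition~\ref{prop 3.1}. The rest of the argument is purely algebraic and, crucially, uses only that $\psi_{ij}$ depends on positions and not on velocities. This structural deficiency is exactly what motivates the introduction of the centripetal term $-(\|v_i\|^2/\|x_i\|^2)x_i$ and the rotation operator $R_{x_j\shortrightarrow x_i}$ in the proposed model \eqref{main}.
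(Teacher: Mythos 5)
Your proof is correct and follows essentially the same route as the paper's: both reduce sphere-invariance (via $\langle x_i,v_i\rangle\equiv 0$ and $\langle x_i,\dot v_i\rangle+\|v_i\|^2\equiv 0$) to the identity $\|v_i\|^2=-\frac{1}{N}\sum_{j}\psi_{ij}\langle x_i,v_j\rangle$ and then observe that, because $\psi_{ij}$ is independent of the velocities, this cannot hold for every admissible configuration. Your explicit witness ($v_j=0$ for $j\neq i$, $v_i\neq 0$ tangent at $x_i$) just makes concrete the paper's terser remark that the identity would have to hold for arbitrary $v_i$ with $\langle v_i,x_i\rangle=0$.
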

\begin{proof}
We assume that each agent lies on a unit sphere, i.e., for all $1\leq i\leq N$,
\begin{align*}x_i\in\mathbb{S}^2. \end{align*}
Assume that the solution $(x_i,v_i)_{i=1}^N$ to the following C-S type equation is as follows:
\begin{align*}
\dot{x}_i&=v_i,\\
\dot{v}_i&=\sum_{j=1}^N\frac{\psi_{ij}}{N}(v_j-v_i),
\end{align*}
and take the inner product between $\dot{v}_i$ and $x_i$ to obtain
\begin{align*}
\langle \dot{v}_i, x_i\rangle&=\sum_{j=1}^N\frac{\psi_{ij}}{N}\langle v_j-v_i~,x_i\rangle.
\end{align*}
Note that from the argument in \eqref{eq 1.0}-\eqref{equiv rel1}, we have
\begin{align*}
\langle x_i,v_i\rangle \equiv 0
\end{align*}
and
\begin{align*}
\langle x_i,\dot{v}_i\rangle +\|v_i\|^2\equiv 0.
\end{align*}
It follows that
\begin{align*}
-\|v_i\|^2=\langle \dot{v}_i, x_i\rangle=\sum_{j=1}^N\frac{\psi_{ij}}{N}\langle v_j-v_i~,x_i\rangle=\sum_{j=1}^N\frac{\psi_{ij}}{N}\langle v_j~,x_i\rangle.
\end{align*}
As $\psi_{ij}$ does not depend on $\{v_1, \cdots, v_N\}$, the above equation holds for any $v_i$ such that $\langle v_i , x_i \rangle = 0$. Therefore, there is no such  solution $\psi_{ij}$ to the equation.
%\textcolor{blue}{If we consider $\frac{\psi_{ij}}{N}$ as indeterminant variable, we have a system of linear equations which has $N$-numbers of equations at any fixed time $t>0$. Therefore, there is no such a solution to the equation.  In conclusion, there is no $\frac{\psi_{ij}}{N}$ such that the C-S type flocking model with weight function $\frac{\psi_{ij}}{N}$ on the unit sphere.}
\end{proof}

If we do not change the first equation, the only possibility to construct a unit sphere model is modification of the $v_j-v_i$ terms for $ i,j \in \{1,\ldots,N\}$. As mentioned in Section 1, from geometrical consideration, it is natural to consider a relative velocity to keep all agents' positions within the given manifold. However, without interactions between each agent, the positions of each agent are not maintained  in the manifold. Therefore, we need an additional external force term. In the sense of \eqref{eq 1.0} - \eqref{equiv rel1}, the acceleration $\dot{v}_i$ in our model has to satisfy
\begin{align*}
\langle x_i,\dot{v}_i\rangle = -\|v_i\|^2.\end{align*}
We propose one possible model as follows. We add a self-consistency term, the so-called centripetal force, as $\displaystyle -\frac{\|v_i\|^2}{\|x_i\|^2}x_i$.
Consider the following centripetal equation:
\begin{align*}\dot{v}_i = -\frac{\|v_i\|^2}{\|x_i\|^2}x_i.\end{align*}

It is well known that the solution to the centripetal equation gives  uniform circular motion on a sphere. Then, the remaining interaction term for the $i$th agent between agents must be orthogonal to $x_i$. Thus, we need an operator map from the tangent space of $x_j$ to the tangent space of  $x_i$.

\subsection{Model}

Our proposed model is as follows: for $x_i, v_i\in \bbr^3$ and $i =1, \ldots, N$,
\begin{align}
\begin{aligned}\nonumber
\dot{x}_i&=v_i,\\
\dot{v}_i&=-\frac{\|v_i\|^2}{\|x_i\|^2}x_i+\sum_{j=1}^N\frac{\psi_{ij}}{N}\big(R_{x_j\shortrightarrow x_i}(v_j)-v_i\big)+\sum_{k=1}^N \frac{\sigma }{N}(\|x_i\|^2x_k - \langle x_i,x_k \rangle  x_i).
\end{aligned}
\end{align}
Here, $R_{x_1\shortrightarrow x_2}(y)=R(x_1,x_2)\cdot y$ is the rotation operator defined below and $\sigma > 0$ is the rate of the inter-particle bonding force.

\begin{definition}\label{mat rot}Let  $x_1,x_2\in \mathbb{S}^2$ be column vectors with $x_1\ne -x_2$.  We define a $3\times 3$ matrix
  \begin{align}\nonumber R(x_1,x_2) :=\left\{ \begin{aligned}
  &\langle x_1 , x_2\rangle I - x_1 x_2^T + x_2 x_1^T + (1-  \langle x_1, x_2\rangle) \Big( \frac{x_1 \times x_2}{|x_1 \times x_2|} \Big) \Big( \frac{x_1 \times x_2}{|x_1 \times x_2|} \Big)^T,\quad&\mbox{if}\quad x_1\ne x_2,\\
  &I,\quad&\mbox{if}\quad x_1=x_2,
  \end{aligned}
  \right.\end{align}
where $I$ is the identity matrix in $\mathbb{R}^3$ and $M^T$ is the transpose of a matrix $M$.
%\end{definition}
%\begin{definition}
%\label{def 2.2}
The operation $R_{x_1\shortrightarrow x_2}(y)$ is defined by the linear transform as
\begin{align*}
%\label{rot op}
R_{x_1\shortrightarrow x_2}(y)=R(x_1, x_2)\cdot y.
\end{align*}
Here, $y$ is a column vector and $\cdot$ is the matrix product. Furthermore, we set
\begin{align}\nonumber
\|x_1 + x_2\| R(x_1, x_2) := 0 \hbox{ at } x_1 = -x_2.
\end{align}

\end{definition}

\begin{remark}
\label{rem:22}
\begin{enumerate}
\item By direct calculation, we can  rewrite the rotation matrix as
\begin{align}\label{rotating}
R(x_1, x_2)=\cos \theta ~I+\sin\theta ~[{\bf u}]+(1-\cos \theta)~ {\bf u}\otimes {\bf u},
\end{align}
  where $\displaystyle {\bf u}=\frac{x_1\times x_2}{\|x_1\times x_2\|}$ is the normalized cross product between the vectors $x_1$ and $x_2$, $\theta$ is the angle between $x_1$ and $x_2$, and $[{\bf u}]$, ${\bf u}\otimes {\bf u}$ are given by
\begin{align*}
[{\bf u}]=\left(\begin{matrix}
0&-u_{e_3}&u_{e_2}\\u_{e_3}&0&-u_{e_1}\\-u_{e_2}&u_{e_1}&0
\end{matrix}\right) ,\quad {\bf u}\otimes {\bf u}=\left(\begin{matrix}
u_{e_1}^2&u_{e_1} u_{e_2}&u_{e_1} u_{e_3}
\\
u_{e_1} u_{e_2}&u_{e_2}^2&u_{e_2} u_{e_3}
\\
u_{e_1}u_{e_3}&u_{e_2} u_{e_3}&u_{e_3}^2
\end{matrix}\right).
\end{align*}
Here, $u_{e_1}$, $u_{e_2}$ and $u_{e_3}$ are the first, second and third components of the vector $u$, respectively.

\item For $x_1=-x_2$, the third term in the right-hand side of \eqref{rotating} is not well-defined. Geometrically, if two points are located  at opposite poles (for example, the north pole and south pole), then there are infinitely many geodesics connecting the two points. This means that we cannot determine a unique parallel transport.  From this property, we need to take $\psi_{ij}=\psi(\|x_i-x_j\|)$ with some decay assumption at $x_1=-x_2$.  Thus, the model \eqref{main} is  valid although $R(x_1,x_2)$ is not defined at $x_1=-x_2$. Our assumptions on $\psi_{ij}$ and the well-posedness will be shown later.

\item  For any $x_1 \in \mathbb{S}^2$, it holds that
\begin{align*}\lim_{x_2\to x_1} R(x_1,x_2)=I.\end{align*}
%(See Lemma~\ref{lem:con})

\item Note that $R(x_1,x_2)$ is bounded for any $x_1,x_2\in \mathbb{S}^2$ (See Lemma~\ref{lemma 2.4}). Thus, although $R(x_1,x_2)$ is not defined at $x_1= - x_2$, it is natural to define $\|x_1+x_2\|R(x_1,x_2)=0$ at $x_1 = - x_2$.
\end{enumerate}\end{remark}

Next, we provide elementary properties of the rotation matrix.

\begin{lemma}\label{lemma 2.3}
For $x_1, x_2 \in \mathbb{S}^2$ with $x_1 \ne -x_2$, the following holds.
\begin{align}
\nonumber
R_{x_1\shortrightarrow x_2}(x_1) = x_2, \quad R_{x_1\shortrightarrow x_2}(x_2) = 2\langle x_1, x_2\rangle x_2 - x_1~  \hbox{ and }  ~R_{x_1\shortrightarrow x_2} (x_1 \times x_2) = x_1 \times x_2.
\end{align}
Furthermore, we have
\begin{align}
\nonumber
R_{x_1\shortrightarrow x_2}^{T} = R_{x_2\shortrightarrow x_1},\quad  %= R_{x_1\shortrightarrow x_2}^{-1}
R_{x_1\shortrightarrow x_2}^T \circ R_{x_1\shortrightarrow x_2} = I_{\mathbb{S}^2}.
\end{align}

\end{lemma}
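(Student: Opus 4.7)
The plan is to verify each identity by direct computation from the explicit formula
\begin{align*}
R(x_1,x_2) = \langle x_1,x_2\rangle I - x_1 x_2^T + x_2 x_1^T + (1-\langle x_1,x_2\rangle)\hat{\mathbf{u}}\hat{\mathbf{u}}^T,
\end{align*}
where $\hat{\mathbf{u}} = (x_1\times x_2)/\|x_1\times x_2\|$. The trivial case $x_1=x_2$ reduces to $R=I$, so I may assume $x_1\ne\pm x_2$; then $\{x_1,x_2,x_1\times x_2\}$ is linearly independent and $\hat{\mathbf{u}}$ is well-defined. Two elementary facts drive the whole calculation: $\|x_i\|=1$ implies $x_i^T x_i=1$, and $\hat{\mathbf{u}}\perp x_1,x_2$ implies $\hat{\mathbf{u}}^T x_i=0$.

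For the first three identities I apply $R(x_1,x_2)$ term-by-term. Acting on $x_1$: the first and second terms contribute $\pm\langle x_1,x_2\rangle x_1$ and cancel, the third gives $x_2$, the fourth vanishes. Acting on $x_2$: the first and third terms each contribute $\langle x_1,x_2\rangle x_2$, the second gives $-x_1$, the fourth vanishes, for a total of $2\langle x_1,x_2\rangle x_2-x_1$. Acting on $x_1\times x_2$: the middle two terms vanish since $x_1\times x_2$ is orthogonal to both $x_1$ and $x_2$, while the first and fourth combine with coefficient $\langle x_1,x_2\rangle+(1-\langle x_1,x_2\rangle)=1$ to return $x_1\times x_2$.

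For $R_{x_1\shortrightarrow x_2}^T = R_{x_2\shortrightarrow x_1}$, I transpose the defining formula. The antisymmetric piece $-x_1 x_2^T + x_2 x_1^T$ has transpose $-x_2 x_1^T + x_1 x_2^T$, which is the same expression with $x_1\leftrightarrow x_2$; the outer-product term $\hat{\mathbf{u}}\hat{\mathbf{u}}^T$ is symmetric and invariant under the swap (since $x_2\times x_1=-\hat{\mathbf{u}}$ produces the same outer product); and $\langle x_1,x_2\rangle$ is symmetric. So the swapped formula coincides with $R(x_2,x_1)$.

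Finally, for $R_{x_1\shortrightarrow x_2}^T\circ R_{x_1\shortrightarrow x_2}=I_{\mathbb{S}^2}$, it suffices to check the identity on the basis $\{x_1,x_2,x_1\times x_2\}$. The first three identities supply the images under $R_{x_1\shortrightarrow x_2}$, and applying $R_{x_2\shortrightarrow x_1}$ (i.e., the same identities with roles exchanged, using $x_2\times x_1=-(x_1\times x_2)$) returns each basis vector after a short cancellation; for instance $R_{x_2\shortrightarrow x_1}(2\langle x_1,x_2\rangle x_2 - x_1) = 2\langle x_1,x_2\rangle x_1 - (2\langle x_1,x_2\rangle x_1 - x_2) = x_2$. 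The only step requiring anything beyond bookkeeping is the coefficient cancellation $\langle x_1,x_2\rangle+(1-\langle x_1,x_2\rangle)=1$ in the computation of $R_{x_1\shortrightarrow x_2}(x_1\times x_2)$; everything else is routine.
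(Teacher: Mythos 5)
Your proof is correct and follows essentially the same route as the paper: verify the action of $R(x_1,x_2)$ on the basis $\{x_1,x_2,x_1\times x_2\}$, obtain $R(x_1,x_2)^T=R(x_2,x_1)$ by transposing the defining formula, and deduce orthogonality by checking $R^TR$ on that basis and invoking linear independence. The only difference is that you spell out the term-by-term cancellations that the paper compresses into ``direct computation shows.''
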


\begin{proof}
By definition of the rotation map $R_{x_1\shortrightarrow x_2} : \mathbb{S}^2 \shortrightarrow \mathbb{S}^2$, it is enough to consider the equivalent properties for a matrix $R(x_1,x_2)$ defined in Definition \ref{mat rot}. For the case of $x_1=x_2$, we can easily check that $R(x_1,x_2)$ satisfies Lemma~\ref{lemma 2.3}, since $R(x_1,x_2)=I$.
%we rewrite  the result of Lemma \ref{lemma 2.3} as follows: for $x_1, x_2 \in \mathbb{S}^2$ with $x_1 \ne -x_2$, let $R(x_1,x_2)$ be the matrix defined in Definition \ref{mat rot}.

Next, we consider the case, $x_1\ne x_2$. As the two vectors $x_1$ and $x_2$ are perpendicular to $x_1\times x_2$, direct computation shows that
\begin{align}
\label{eqn:1rot}
R(x_1,x_2)\cdot x_1 = x_2, \quad R(x_1,x_2)\cdot x_2 = 2\langle x_1, x_2\rangle x_2 - x_1  \hbox{ and }  R(x_1,x_2) \cdot (x_1 \times x_2) = x_1 \times x_2.
\end{align}
Furthermore, since we have
\begin{align}\begin{aligned}\nonumber
R(x_1,x_2)^T &= \langle x_1 , x_2\rangle I - x_2 x_1^T + x_1 x_2^T + (1-  \langle x_1 , x_2\rangle) \left( \frac{x_1 \times x_2}{|x_1 \times x_2|} \right) \left( \frac{x_1 \times x_2}{|x_1 \times x_2|} \right)^T,
%\\&=R(x_2,x_1),
\end{aligned}
\end{align}
we conclude that
\begin{align}
\label{eqn:2rot}
R(x_1,x_2)^{T} = R(x_2,x_1). %= R_{x_1\shortrightarrow x_2}^{-1}.
\end{align}

We show that $R(x_1,x_2)$ is an orthogonal matrix, that is
\begin{align}
\label{eqn:rot11}
R(x_1,x_2)^T R(x_1,x_2) = I.
\end{align}
From \eqref{eqn:1rot} and \eqref{eqn:2rot}, it holds that
\begin{equation*}
R(x_1,x_2)^T R(x_1,x_2)\cdot x_1 = R(x_2,x_1)\cdot x_2 = x_1
\end{equation*}
and
\begin{equation*}
R(x_1,x_2)^T R(x_1,x_2) \cdot x_2 =R(x_2,x_1)\cdot(2\langle x_1, x_2\rangle x_2 - x_1) = 2\langle x_1, x_2\rangle x_1 - (2\langle x_1, x_2\rangle x_1 - x_2) = x_2.
\end{equation*}
Furthermore, the  last equality in \eqref{eqn:1rot} implies that
\begin{equation*}
R(x_1,x_2)^TR(x_1,x_2)\cdot (x_1\times x_2) = x_1\times x_2.
\end{equation*}
As $x_1, x_2$ and $x_1\times x_2$ are linearly independent, we conclude that for any $y \in \R^3$
\begin{align*}
\big(R(x_1,x_2)^TR(x_1,x_2) - I\big)\cdot y = 0,
\end{align*}
and thus we conclude \eqref{eqn:rot11}.
%\begin{align*}R(x_1,x_2)^TR(x_1,x_2) = I.\end{align*}
\end{proof}

As a consequence of Lemma \ref{lemma 2.3}, we have the following property.

\begin{lemma}\label{lemma 2.4}
Let  $x_1, x_2 \in \mathbb{S}^2$ with $x_1 \ne -x_2$. Then, we have
\begin{align}
\nonumber
\|v\|=\|R_{x_1\shortrightarrow x_2}(v)\| \hbox{ for any } v \in \mathbb{R}^n,
\end{align}
where $R_{x_1\shortrightarrow x_2}(y)$ is the rotation operator from Definition \ref{mat rot}.
\end{lemma}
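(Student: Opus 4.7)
The plan is to reduce the isometry claim immediately to the orthogonality of the matrix $R(x_1,x_2)$, which was already established in the proof of Lemma~\ref{lemma 2.3}. Indeed, by Definition~\ref{mat rot}, $R_{x_1\shortrightarrow x_2}(v) = R(x_1,x_2)\cdot v$, so the norm-preservation assertion is equivalent to showing that $R(x_1,x_2)$ acts as an isometry on $\mathbb{R}^3$.

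The key step I would invoke is the identity $R(x_1,x_2)^T R(x_1,x_2) = I$ proved in Lemma~\ref{lemma 2.3} (formula \eqref{eqn:rot11}). Given this, the chain of equalities
\begin{equation*}
\|R_{x_1\shortrightarrow x_2}(v)\|^2 = \langle R(x_1,x_2)v,\, R(x_1,x_2)v\rangle = \langle v,\, R(x_1,x_2)^T R(x_1,x_2)v\rangle = \langle v,v\rangle = \|v\|^2
\end{equation*}
yields the result after taking square roots. No case distinction between $x_1 = x_2$ and $x_1 \ne x_2$ is needed, since in the former case $R(x_1,x_2) = I$ and the claim is trivial, while the latter case is covered directly by Lemma~\ref{lemma 2.3}.

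There is no substantive obstacle: the content of this lemma is a one-line consequence of orthogonality, and all the real work was done in verifying $R(x_1,x_2)^T R(x_1,x_2) = I$ in the preceding lemma. The only minor bookkeeping is to note that the statement as written references $v\in\mathbb{R}^n$, but the rotation matrix is a $3\times 3$ object, so one should read this as $v \in \mathbb{R}^3$; the argument is valid verbatim in that dimension.
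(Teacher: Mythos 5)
Your proposal is correct and matches the paper's own argument: the paper likewise deduces the isometry from the orthogonality identity $R(x_1,x_2)^T R(x_1,x_2) = I$ established in Lemma~\ref{lemma 2.3}, via the same chain of inner-product equalities. Your remark that $v\in\mathbb{R}^n$ should be read as $v\in\mathbb{R}^3$ is a fair observation about a typo in the statement, but otherwise there is nothing to add.
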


\begin{proof}
From Lemma \ref{lemma 2.3}, it holds that
\begin{align*}
\|v\|^2=\langle v,v\rangle&=\langle v,R(x_1,x_2)^T R(x_1,x_2)\cdot v\rangle\\
&=\langle R(x_1,x_2)\cdot v, R(x_1,x_2)\cdot v\rangle
=\langle R_{x_1\shortrightarrow x_2}(v),R_{x_1\shortrightarrow x_2}(v)\rangle=
\|R_{x_1\shortrightarrow x_2}(v)\|^2.
\end{align*}
\end{proof}

\begin{remark}%\label{remark 2.4}
\begin{enumerate}
\item For $x_1, x_2 \in \mathbb{S}^2$ with $x_1\ne -x_2$, the rotation map $R_{x_1\shortrightarrow x_2} : \mathbb{S}^2 \shortrightarrow \mathbb{S}^2$ is a  linear bijection and isometry between two spheres. Furthermore, the differential of $R_{x_1\shortrightarrow x_2}$ at $x_1$ gives a linear map between the tangent spaces at $x_1$ and $x_2$, which contain the velocity vectors. Note that the differential of the map $R_{x_1\shortrightarrow x_2}$ at $x_1$ is the same as the matrix $R(x_1,x_2)$ since the rotation map is linear.

\item The rotation operator is a rather admissible choice. If we assume that $\|x_i\|\equiv 1$, then we have \begin{align*}\langle v_i,x_i\rangle=0.\end{align*}
    When the force equation has  ~$\displaystyle -\frac{\|v_i\|^2}{\|x_i\|^2}x_i$ term, then we have to replace the $v_j$ term by some of the tangential vectors of the sphere at $x=x_i$. In this point of view, the rotation vector $R_{x_j\shortrightarrow x_i}(v_j)$ is the most natural choice for replacement.
\end{enumerate}

\end{remark}

Lastly, our model includes the inter-particle bonding force. For the original C-S model in $\bbr^3$, the definition of the flocking contains the uniform boundedness of position differences. This property has been proven for the unconditional and conditional cases using the corresponding initial data and  means that the velocity alignment is faster than the dissipation of the ensemble. This is achieved by obtaining the exponential decay rate of velocity difference and from the exponential decay, the uniform boundedness of position differences was obtained.  However, for $\bbs^2$ case in this paper, we cannot control the diameter of agent's positions, even with an exponential decay rate of velocity difference, due to the geometric property of $\bbs^2$ and the rotation operator. Thus, the argument that used in the original C-S model cannot be applied  to our model. To guarantee the antipodal points avoidance, we included the inter-particle bonding force similar to one for the augmented C-S model in \cite{PKH10},
\[\displaystyle \frac{\sigma}{N} \sum_{k=1}^N (x_k - x_i).\]
We note that tighter spatial configurations can be achieved by adding this term to the original C-S model \cite{PKH10}. As the agent needs to be located on the sphere for all time in our case, so we need some modification in the above terms and we develop the inter-particle bonding force on a sphere based on Lohe operator in \cite{Lo-1,Lo-2}:
\begin{align}
\sum_{k=1}^N \frac{\sigma }{N}(\|x_i\|^2x_k - \langle x_i,x_k \rangle  x_i).
\end{align}
From this modification, we can prove that the ensemble $(x_i,v_i)_{i=1}^N$  satisfying \eqref{main} is located on the sphere and obtain an energy dissipation property that plays a crucial role in the proof of the flocking theorem. For the detailed, see Proposition  \ref{prop 3.1} and \ref{prop 2.6}.
\section{The global well-posedness of a unit sphere model}
\setcounter{equation}{0}
In this section, we prove the global existence and uniqueness of the solution to \eqref{main}.

\begin{theorem}
\label{thm:wel} If $\psi$ satisfies \eqref{eqn:psi}, then there exists a unique solution $(x_i,v_i)_{i=1}^N$ to the system \eqref{main} for all time. In particular, $(x_i)_{i=1}^N$ are located in a unit sphere for all time $t>0$.
\end{theorem}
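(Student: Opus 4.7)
My plan is to combine classical ODE theory on the regular part of phase space with the algebraic structure enforcing the sphere constraint, and then to extend globally by a velocity bound together with a careful treatment of the antipodal singularity of the rotation matrix $R$.

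On the open set $\Omega := \{(x,v)\in(\mathbb R^3\setminus\{0\})^N\times(\mathbb R^3)^N : x_i\neq -x_j \text{ for all } i\neq j\}$, the right-hand side of~\eqref{main} is $C^1$: $\psi$ is $C^1$, $R(x_i,x_j)$ depends smoothly on $(x_i,x_j)$ off the antipodal locus, and the centripetal term is smooth where $x_i\neq 0$. Picard--Lindel\"of yields a unique local solution on a maximal interval $[0,T^*)$. To verify that positions remain on the sphere I set $\phi_i := \langle x_i, v_i\rangle$, for which $\frac{d}{dt}\|x_i\|^2 = 2\phi_i$. Taking the inner product of $\dot v_i$ with $x_i$: the centripetal term contributes $-\|v_i\|^2$, the bonding term vanishes identically (it is orthogonal to $x_i$ by construction), and Lemma~\ref{lemma 2.3} (with $R(x_j,x_i)^T = R(x_i,x_j)$ and $R(x_i,x_j)x_i = x_j$) yields the key identity
\begin{equation*}
\langle x_i, R(x_j, x_i) v_j\rangle = \langle R(x_i,x_j)x_i, v_j\rangle = \langle x_j, v_j\rangle = \phi_j,
\end{equation*}
so that $\phi$ obeys the closed linear system $\frac{d}{dt}\phi_i = \frac{1}{N}\sum_j \psi_{ij}(\phi_j - \phi_i)$. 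The compatibility condition $\phi_i(0)=0$ then forces $\phi_i\equiv 0$, hence $\|x_i(t)\|\equiv 1$ on $[0,T^*)$.

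For global continuation, with positions confined to the compact unit sphere, only growth of $\|v_i\|$ or contact with the antipodal locus can obstruct the solution. Summing $\frac{d}{dt}\|v_i\|^2$ and using the isometry $\|R(x_j,x_i)v_j\|=\|v_j\|$ (Lemma~\ref{lemma 2.4}), the symmetry $\psi_{ij}=\psi_{ji}$, and Cauchy--Schwarz produces an inequality of the form $\frac{d}{dt}\sum_i\|v_i\|^2 \leq C_\sigma \bigl(\sum_i\|v_i\|^2\bigr)^{1/2}$, ruling out finite-time velocity blow-up. The antipodal obstruction is resolved by the identity
\begin{equation*}
\|x_i+x_j\|^2 R(x_i,x_j) = \|x_i+x_j\|^2\bigl[\langle x_i,x_j\rangle I - x_i x_j^T + x_j x_i^T\bigr] + 2(x_i\times x_j)(x_i\times x_j)^T,
\end{equation*}
which follows from $|x_i\times x_j|^2 = (1-\langle x_i,x_j\rangle)(1+\langle x_i,x_j\rangle)$ and $\|x_i+x_j\|^2 = 2(1+\langle x_i,x_j\rangle)$; the right-hand side is polynomial in $(x_i,x_j)$, hence smooth. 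Combined with the factorization $\psi(\|x_i-x_j\|) = g(\|x_i+x_j\|^2)\,\|x_i+x_j\|^2$, where $g$ is bounded and continuous on $[0,4]$ with $g(0) = -\psi'(2)/4 > 0$ (thanks to $\psi(2)=0$ and $\psi'(2)<0$), the interaction term $\psi_{ij} R(x_j,x_i)v_j$ extends continuously to antipodal configurations in accord with the convention in Definition~\ref{mat rot}, and standard ODE continuation delivers the global solution.

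The main obstacle is exactly this antipodal singularity: the unit normal $(x_i\times x_j)/|x_i\times x_j|$ entering $R$ is undefined at $x_j = -x_i$, so the raw vector field is only defined off a codimension-one subset of phase space. The factorization above pins down the precise cancellation---the apparent $1/\|x_i+x_j\|^2$ blow-up in $R$ is absorbed by the quadratic vanishing of $\psi$ at distance $2$---so that the product contributing to $\dot v_i$ is genuinely bounded and continuous. All remaining steps reduce to a straightforward Gronwall argument and the classical continuation theorem once this regularity is secured.
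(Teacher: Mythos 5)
Your overall strategy parallels the paper's: local existence, propagation of the constraint $\langle x_i,v_i\rangle=0$ to keep the positions on the sphere (your closed linear system for $\phi_i=\langle x_i,v_i\rangle$ is exactly the paper's identity \eqref{eq 2.7}, which they resolve by a Gronwall estimate on $\sum_i|\phi_i|^2$ rather than by linear uniqueness --- equivalent), an a priori velocity bound, and continuation. Your factorization $\|x_i+x_j\|^2R(x_i,x_j)=\|x_i+x_j\|^2\bigl[\langle x_i,x_j\rangle I-x_ix_j^T+x_jx_i^T\bigr]+2(x_i\times x_j)(x_i\times x_j)^T$ combined with $\psi=g(\|x_i+x_j\|^2)\,\|x_i+x_j\|^2$ is a clean way to exhibit the cancellation that the paper encodes in Lemma~\ref{lem:psi} and the auxiliary map $V$.

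There is, however, a genuine gap at the antipodal locus. The theorem asserts \emph{uniqueness}, and your continuation must carry the solution up to, and possibly through, configurations with $x_i=-x_j$: antipodal avoidance is only established later and only under $2\sigma>N^2\E(0)$, so for general $\sigma\geq 0$ nothing prevents the trajectory from reaching the locus; moreover the admissible data \eqref{eqn:adm} may already contain an antipodal pair, in which case your Picard--Lindel\"of step on $\Omega$ does not even apply at $t=0$. Mere continuity of the extended interaction term gives Peano existence but not uniqueness; what is needed is a \emph{local Lipschitz} (or Osgood) bound for the extension at the locus. This is precisely the content of the paper's Lemma~\ref{lem:con}, the technical core of their proof: they construct the globally defined extension $T$, prove it is locally Lipschitz on all of $Q$ including the antipodal set, solve the extended system \eqref{eqn:wel11}, and only then deduce that its solutions solve \eqref{main}. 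Your factorization can likely be upgraded to supply the missing estimate --- $P:=\|x_i+x_j\|^2R$ is polynomial with $\|P\|=O(\|x_i+x_j\|^2)$, while $g(s)=\psi(\sqrt{4-s})/s$ satisfies $g'(s)=o(1/s)$ as $s\to 0$ by the $C^1$ assumption together with $\psi(2)=0$, so the product rule shows $gP$ has bounded gradient near the locus --- but this must actually be stated and proved; as written, ``extends continuously \dots and standard ODE continuation delivers the global solution'' does not justify the uniqueness claim.
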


Note that %In order that the flocking definition in Definition~\ref{def:flo} becomes meaningful,
in our model, the position of each agent is located on a unit sphere. The following natural conditions are required for this property to appear: We say that the initial data are admissible if it holds that
\begin{align}
\label{eqn:adm}
%\label{eqn:adm}
%\mathcal{I} :=\left\{(x_i(0),v_i(0))_{i=1}^N:\langle v_i(0),x_i(0)\rangle=0 \quad \mbox{and}\quad  \|x_i(0)\|= 1  \quad \mbox{for all} \quad i\in \{ 1,\ldots,N \} \right\}.
\langle v_i(0),x_i(0)\rangle=0 \quad \mbox{and}\quad  \|x_i(0)\|= 1  \quad \mbox{for all} \quad i\in \{ 1,\ldots,N \}.
\end{align}
%\end{definition}
The following proposition shows that the modulus of $x_i$ is conserved and $v_i$ is in the tangent space of a unit sphere at $x_i$.

\begin{proposition}\label{prop 3.1}
Let $(x_i(t),v_i(t))_{i=1}^N $ be a solution to \eqref{main} and assume that the initial data are admissible and $\psi_{ij}$ are nonnegative bounded functions for all $i,j\in \{1,\ldots,N\}$. Then  for all $i\in \{1,\ldots,N\}$ and  $t>0$,
\begin{align*}\langle v_i(t),x_i(t)\rangle=0 \hbox{ and } \quad \|x_i(t)\|= 1.\end{align*}
\end{proposition}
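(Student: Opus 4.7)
My plan is to introduce the scalar quantities $A_i(t) := \|x_i(t)\|^2$ and $B_i(t) := \langle x_i(t), v_i(t)\rangle$, which by \eqref{eqn:adm} satisfy $A_i(0)=1$ and $B_i(0)=0$. Differentiating gives $\dot A_i = 2B_i$, so it is enough to prove $B_i\equiv 0$ on the existence interval; then $A_i\equiv 1$ follows by integration. All computations will be performed on a local solution supplied by the standard Picard--Lindel\"of argument, and the goal is to show that the relevant constraint set is invariant.

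Next I would take the inner product of the $\dot v_i$ equation in \eqref{main} with $x_i$ and compute $\dot B_i = \|v_i\|^2 + \langle x_i,\dot v_i\rangle$ summand by summand. The centripetal term contributes $-\|v_i\|^2 \cdot \|x_i\|^2/\|x_i\|^2 = -\|v_i\|^2$, cancelling the $\|v_i\|^2$ coming from the product rule. The bonding term drops out identically, since each summand $\|x_i\|^2 \langle x_k,x_i\rangle - \langle x_i,x_k\rangle \|x_i\|^2$ equals zero regardless of whether $\|x_i\|=1$. What remains is
\begin{align*}
\dot B_i \;=\; \sum_{j=1}^N \frac{\psi_{ij}}{N}\bigl(\langle R_{x_j\shortrightarrow x_i}(v_j),\,x_i\rangle - B_i\bigr).
\end{align*}
The crucial observation is that on the submanifold $\mathcal{M} := \{\|x_k\|=1 \text{ for all }k\}$, Lemma~\ref{lemma 2.3} provides both $R_{x_j\shortrightarrow x_i}(x_j) = x_i$ and the orthogonality $R_{x_j\shortrightarrow x_i}^T R_{x_j\shortrightarrow x_i} = I$, whence $\langle R_{x_j\shortrightarrow x_i}(v_j), x_i\rangle = \langle R_{x_j\shortrightarrow x_i}(v_j), R_{x_j\shortrightarrow x_i}(x_j)\rangle = \langle v_j,x_j\rangle = B_j$. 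Thus on $\mathcal{M}$ the vector $(B_1,\dots,B_N)$ obeys the linear Laplacian system $\dot B_i = \sum_j \frac{\psi_{ij}}{N}(B_j - B_i)$, which admits $B\equiv 0$ as its unique solution from zero initial data.

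To assemble the conclusion I would argue by invariance: the vector field associated with \eqref{main} is tangent to the set $\{A_i=1,\ B_i=0 \text{ for all }i\}$, because there we have both $\dot A_i = 2B_i = 0$ and $\dot B_i = \sum_j \frac{\psi_{ij}}{N}(B_j-B_i) = 0$. Uniqueness of the local solution to \eqref{main} then forces the trajectory issuing from admissible data to remain in this set, giving $\|x_i(t)\|=1$ and $\langle x_i(t),v_i(t)\rangle=0$ for all $t$ in the existence interval. The main obstacle I anticipate is precisely that Lemma~\ref{lemma 2.3} presupposes $\|x_i\|=\|x_j\|=1$, which is the very statement to be proved; I sidestep this circularity by working on the constraint manifold $\mathcal{M}$, closing the ODE for $(B_i)$ there, and deferring to ODE uniqueness rather than attempting to control how $A_i$ might drift away from unity directly.
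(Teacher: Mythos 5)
Your proposal is correct and, at its core, follows the same route as the paper's proof: both reduce the two claims to showing $\langle x_i,v_i\rangle\equiv 0$ via $\frac{d}{dt}\|x_i\|^2=2\langle x_i,v_i\rangle$, and both rest on the identical computation that the centripetal term cancels the $\|v_i\|^2$ coming from the product rule, the bonding term vanishes identically, and the identities $R_{x_j\shortrightarrow x_i}^T=R_{x_i\shortrightarrow x_j}$ and $R_{x_i\shortrightarrow x_j}(x_i)=x_j$ of Lemma~\ref{lemma 2.3} convert the interaction term into $\sum_j\frac{\psi_{ij}}{N}\big(\langle v_j,x_j\rangle-\langle v_i,x_i\rangle\big)$. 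The only difference is how the argument is closed: the paper applies a Gronwall inequality to $\sum_i|\langle v_i,x_i\rangle|^2$ (after discarding the nonpositive $-\psi_{ij}\langle v_i,x_i\rangle^2$ contributions), whereas you invoke tangency of the vector field to the constraint set $\{\|x_i\|=1,\ \langle x_i,v_i\rangle=0\}$ together with ODE uniqueness; both devices ultimately encode the fact that the linear homogeneous system for $B_i$ with zero data has only the zero solution, and both face the circularity you correctly flag, since the key identity uses Lemma~\ref{lemma 2.3} and hence presupposes $x_i\in\mathbb{S}^2$. One caveat on your closing step: tangency plus uniqueness yields invariance only if the right-hand side of \eqref{main} is defined and locally Lipschitz on an ambient neighborhood of the constraint set in $\R^{6N}$, so that the trajectory constructed on the submanifold can be compared with the given one; since $R$ in Definition~\ref{mat rot} is only defined for unit vectors, this requires the normalized extension $T$ of Lemma~\ref{lem:con}, which is precisely how the paper discharges the same issue in the proof of Theorem~\ref{thm:wel}. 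With that extension in hand, your argument is sound.
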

\begin{proof}
%We recall the system of equations.
%\begin{align}
%\begin{aligned}\label{ECMre_pre}
%\dot{x}_i&=v_i,\\
%\dot{v}_i&=-\frac{\|v_i\|^2}{\|x_i\|^2}x_i+ \sum_{j=1}^N\frac{\psi_{ij}}{N}(R_{x_j\shortrightarrow x_i}(v_j)-v_i).
%\end{aligned}
%\end{align}

We take the inner product between $\dot{x}_i$ and $x_i$. From the first equation of \eqref{main}, it follows that
\begin{align*}
\frac{d}{dt}\| x_i\|^2=2\langle \dot{x}_i,x_i\rangle =2\langle v_i,x_i\rangle.
\end{align*}
Thus, the necessary and sufficient condition for conservation of the modulus $\|x_i(t)\|\equiv 1$ is
\begin{align*}\langle v_i(t),x_i(t)\rangle\equiv 0,\end{align*} since  initial conditions satisfy $\|x_i(0)\|=1$ and $\langle v_i(0),x_i(0)\rangle=0$, $i\in \{1,\ldots,N\}$.

Note that
\begin{align*}0=\frac{d}{dt}\langle v_i,x_i\rangle=\langle \dot{v}_i,x_i\rangle+\langle v_i,\dot{x}_i\rangle=\langle \dot{v}_i,x_i\rangle+\langle v_i,v_i\rangle.\end{align*}
Thus,  an equivalent relation to $\langle v_i,x_i\rangle\equiv 0$ is
 \begin{center}
 $\langle x_i,\dot{v}_i\rangle +\|v_i\|^2\equiv 0.$\footnote{Therefore, we have \eqref{equiv rel}.}
 \end{center}
From the above argument, it suffices to prove that $\langle x_i,\dot{v}_i\rangle +\|v_i\|^2\equiv 0$.
Taking the inner product between  the second equation on the system and $x_i$ leads that
\begin{align*}
\langle \dot{v}_i, x_i\rangle &=-\|v_i\|^2+\sum_{j=1}^N\frac{\psi_{ij}}{N}(\langle R_{x_j\shortrightarrow x_i}(v_j), x_i\rangle-\langle v_i, x_i\rangle)
+\sum_{k=1}^N \frac{\sigma }{N}(\|x_i\|^2\langle x_k, x_i\rangle  - \langle x_i,x_k \rangle \langle x_i,x_i\rangle)
\\&=-\|v_i\|^2+\sum_{j=1}^N\frac{\psi_{ij}}{N}(\langle R_{x_j\shortrightarrow x_i}(v_j), x_i\rangle-\langle v_i, x_i\rangle)
.
\end{align*}
By Lemma \ref{lemma 2.3}, the operator $R_{x_j\shortrightarrow x_i}$ satisfies
\begin{align*}R_{x_j\shortrightarrow x_i}^T=R_{x_j\shortrightarrow x_i}^{-1}=R_{x_i\shortrightarrow x_j},\end{align*}
and $R_{x_i\shortrightarrow x_j}(x_i)=x_j$.

The above equalities yield
\begin{align*}
\langle \dot{v}_i, x_i\rangle &=-\|v_i\|^2+\sum_{j=1}^N\frac{\psi_{ij}}{N}(\langle R_{x_j\shortrightarrow x_i}(v_j), x_i\rangle-\langle v_i, x_i\rangle)\\
&=-\|v_i\|^2+\sum_{j=1}^N\frac{\psi_{ij}}{N}( \langle v_j,R_{x_i\shortrightarrow x_j}(x_i)\rangle-\langle v_i, x_i\rangle)\\
&=-\|v_i\|^2+\sum_{j=1}^N\frac{\psi_{ij}}{N}(\langle v_j, x_j\rangle-\langle v_i, x_i\rangle).
\end{align*}
Thus, we have
\begin{align}\label{eq 2.7}
\langle \dot{v}_i, x_i\rangle +\|v_i\|^2=\sum_{j=1}^N\frac{\psi_{ij}}{N}(\langle v_j, x_j\rangle-\langle v_i, x_i\rangle).
\end{align}
We sum up \eqref{eq 2.7} with respect to index $i$ to obtain
\begin{align*}
\frac{d}{dt}\sum_{i=1}^N|\langle v_i,x_i\rangle|^2&=2\sum_{i=1}^N ( \langle \dot{v}_i,x_i\rangle+\langle v_i,\dot{x}_i\rangle)\langle v_i,x_i\rangle  \\
&=  2\sum_{i=1}^N(\langle \dot{v}_i, x_i\rangle +\|v_i\|^2)~\langle v_i,x_i\rangle
\\
&= 2\sum_{i=1}^N \sum_{j=1}^N\frac{\psi_{ij}}{N}(\langle v_j, x_j\rangle-\langle v_i, x_i\rangle)~\langle v_i,x_i\rangle
\\
&\leq  2\sum_{i=1}^N \sum_{j=1}^N\frac{\psi_{ij}}{N}\langle v_j, x_j\rangle~\langle v_i,x_i\rangle.
\end{align*}
%Let $\displaystyle \psi_M^\epsilon=\max_{i,j}\sup_{t} \psi_{i,j}(t)$.
Then we have
\begin{align*}
\frac{d}{dt}\sum_{i=1}^N |\langle v_i(t),x_i(t)\rangle|^2&\leq2N \max_{1\leq j,k\leq N}\frac{\psi_{jk}}{N} \sum_{i=1}^N \big|\langle v_i(t), x_i(t)\rangle \big|^2.
\end{align*}
Since we assume that initial data satisfy $\sum_{i=1}^N |\langle v_i(0),x_i(0)\rangle|^2=0$, the Gronwall inequality implies that
\begin{align*}\sum_{i=1}^N |\langle v_i(t),x_i(t)\rangle|\equiv0,~\mbox{for}~ t>0.\end{align*}
As a consequence, the above argument shows that
\begin{align*}\|x_i(t)\|\equiv 1, \quad \mbox{for}~t>0,~ i\in\{1,\ldots N\} . \end{align*}
\end{proof}

 As we mentioned before, we cannot use the Lyapunov functional approach to obtain the flocking estimate due to the geometric modification. %Thus, we must consider the alternate way to prove the flocking theorem. %The following proposition gives a motivation of the proof of flocking theorem in this paper and
Instead, the following integrability in Proposition \ref{prop 2.6} plays an important role in the proof of the flocking theorem.

\begin{proposition}\label{prop 2.6}
Let $(x_i(t),v_i(t))_{i=1}^N $ be a solution to \eqref{main}. Assume that  $(x_i(0),v_i(0))_{i=1}^N $ satisfies the admissible initial data condition in \eqref{eqn:adm} and  $\psi_{ij}$ are nonnegative bounded  functions and $\psi_{ij}=\psi_{ji}$ for all $i,j\in \{1,\ldots,N\}$. Then we have
\begin{align}
\label{eqn:prop26}
\frac{d\E}{dt}=-\sum_{i,j=1}^N\frac{\psi_{ij}}{N^2}\| R_{x_j\shortrightarrow x_i}(v_j)-v_i\|^2.
\end{align}
Moreover,  the following estimate holds:
\begin{align}
\label{eqn:prop261}
\mathcal{V}(t)\leq \sqrt{N \E(0)},
\end{align}
where $\mathcal{V} : [0,+\infty) \rightarrow [0,+\infty)$ is the maximal speed given by
\begin{align}
\label{eqn:prop262}
\mathcal{V}(t):=\max_{1\leq i\leq N}\|v_i(t)\|.
\end{align}

\end{proposition}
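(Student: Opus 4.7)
The plan is to differentiate $\mathcal{E}_K$ and $\mathcal{E}_C$ separately, exploit the conservation laws from Proposition~\ref{prop 3.1} to kill the centripetal contribution and to show that the bonding-force contributions to $\dot{\mathcal{E}}_K$ and $\dot{\mathcal{E}}_C$ cancel, and then symmetrize the remaining alignment term into a sum of squares via Lemma~\ref{lemma 2.3} and Lemma~\ref{lemma 2.4}.

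\textbf{Step 1: differentiate $\mathcal{E}_K$.} Using the second equation of \eqref{main} and $\langle v_i,x_i\rangle=0$, $\|x_i\|=1$ from Proposition~\ref{prop 3.1}, I would show that $\langle v_i, -\tfrac{\|v_i\|^2}{\|x_i\|^2}x_i\rangle=0$ and that the bonding term contributes
\begin{align*}
\sum_{k=1}^N \frac{\sigma}{N}\bigl(\|x_i\|^2\langle v_i,x_k\rangle-\langle x_i,x_k\rangle\langle v_i,x_i\rangle\bigr)=\sum_{k=1}^N\frac{\sigma}{N}\langle v_i,x_k\rangle.
\end{align*}
Hence
\begin{align*}
\frac{d\mathcal{E}_K}{dt}=\frac{2}{N^2}\sum_{i,j=1}^N\psi_{ij}\langle v_i,R_{x_j\shortrightarrow x_i}(v_j)-v_i\rangle+\frac{2\sigma}{N^2}\sum_{i,k=1}^N\langle v_i,x_k\rangle.
\end{align*}

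\textbf{Step 2: differentiate $\mathcal{E}_C$.} Using $\dot{x}_i=v_i$, expanding $\langle x_k-x_l,v_k-v_l\rangle$, applying $\langle v_k,x_k\rangle=0$, and symmetrizing the indices, I obtain
\begin{align*}
\frac{d\mathcal{E}_C}{dt}=-\frac{2\sigma}{N^2}\sum_{i,k=1}^N\langle v_i,x_k\rangle,
\end{align*}
which exactly cancels the bonding contribution in Step~1.

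\textbf{Step 3: symmetrize the alignment term.} Adding Steps~1 and~2 yields
\begin{align*}
\frac{d\mathcal{E}}{dt}=\frac{2}{N^2}\sum_{i,j=1}^N\psi_{ij}\langle v_i,R_{x_j\shortrightarrow x_i}(v_j)-v_i\rangle.
\end{align*}
Swapping the dummy indices $i\leftrightarrow j$, using $\psi_{ij}=\psi_{ji}$ and the identity $\langle v_j,R_{x_i\shortrightarrow x_j}(v_i)\rangle=\langle R_{x_j\shortrightarrow x_i}(v_j),v_i\rangle$ from $R_{x_i\shortrightarrow x_j}=R_{x_j\shortrightarrow x_i}^T$ (Lemma~\ref{lemma 2.3}), I would rewrite the right-hand side as
\begin{align*}
\frac{1}{N^2}\sum_{i,j=1}^N\psi_{ij}\bigl(2\langle v_i,R_{x_j\shortrightarrow x_i}(v_j)\rangle-\|v_i\|^2-\|v_j\|^2\bigr).
\end{align*}
The isometry $\|R_{x_j\shortrightarrow x_i}(v_j)\|=\|v_j\|$ from Lemma~\ref{lemma 2.4} then allows me to complete the square, producing exactly $-\sum_{i,j}\psi_{ij}N^{-2}\|R_{x_j\shortrightarrow x_i}(v_j)-v_i\|^2$, which is \eqref{eqn:prop26}.

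\textbf{Step 4: the maximal speed bound.} Since $\psi_{ij}\geq 0$, identity \eqref{eqn:prop26} gives $\mathcal{E}(t)\leq \mathcal{E}(0)$ for all $t\geq 0$. Because $\mathcal{E}_C\geq 0$, we get $\mathcal{E}_K(t)\leq \mathcal{E}(0)$, and the elementary bound $\mathcal{V}(t)^2\leq \sum_k\|v_k(t)\|^2=N\mathcal{E}_K(t)$ immediately yields \eqref{eqn:prop261}. The main delicate point in the whole argument is the symmetrization in Step~3, which requires handling the pair $(R_{x_j\shortrightarrow x_i},R_{x_i\shortrightarrow x_j})$ correctly at configurations where some pairs $(x_i,x_j)$ approach antipodal positions; this is controlled by the convention $\|x_1+x_2\|R(x_1,x_2)=0$ at $x_1=-x_2$ together with the decay $\psi(2)=0$ from \eqref{eqn:psi}, which makes every summand in the dissipation continuous up to the antipodal limit.
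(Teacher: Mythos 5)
Your proposal is correct and follows essentially the same route as the paper's proof: both kill the centripetal term via $\langle v_i,x_i\rangle=0$, cancel the bonding contributions to $\dot{\E}_K$ and $\dot{\E}_C$ by an index swap, symmetrize the alignment term using $R_{x_i\shortrightarrow x_j}=R_{x_j\shortrightarrow x_i}^T$ and the isometry of $R$ to complete the square, and then deduce the speed bound from $\mathcal{V}^2\leq N\E_K\leq N\E\leq N\E(0)$. The only difference is organizational (you differentiate $\E_K$ and $\E_C$ separately while the paper treats $\E$ at once), which is immaterial.
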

\begin{proof}
%Recall the main system.
%\begin{align}
%\begin{aligned}\nonumber
%\dot{x}_i&=v_i,\\
%\dot{v}_i&=-\frac{\|v_i\|^2}{\|x_i\|^2}x_i+\sum_{j=1}^N\frac{\psi_{ij}}{N}(R_{x_j\shortrightarrow x_i}(v_j)-v_i).
%\end{aligned}
%\end{align}
We take the time-derivative of $\E$ and use \eqref{main} to obtain
\begin{align*}\begin{aligned}
%\label{eqn:vt12}
\frac{d\E}{dt} &= \frac{2}{N}\sum_{i=1}^N \langle \dot v_i , v_i \rangle + \frac{\sigma}{N^2} \sum_{i,j=1}^N  \langle x_i - x_j , v_i - v_j \rangle\\
&= -\frac{2}{N}\sum_{i=1}^N\frac{\|v_i\|^2}{\|x_i\|^2}\langle x_i, v_i \rangle+ \frac{2}{N^2}\sum_{i,j=1}^N \psi_{ij} \langle R_{x_j\shortrightarrow x_i}(v_j) - v_i , v_i \rangle  \\&\qquad+  \frac{2\sigma}{N^2}\sum_{i,j=1}^N  \langle x_j - x_i ,  v_i \rangle +  \frac{\sigma}{N^2}\sum_{i,j=1}^N  \langle x_i - x_j ,  v_i - v_j \rangle.
\end{aligned}
\end{align*}
%If we assume that initial data satisfy admissible condition, then
%$\langle x_i(t), v_i(t) \rangle=0$ by Proposition \ref{prop 3.1} so that we have

By interchanging indices $i$ and $j$, we have
\begin{align*}
%\label{eqn:vt13}
2 \sum_{i,j=1}^N  \langle x_j - x_i , v_i \rangle = \sum_{i,j=1}^N  \langle x_j - x_i, v_i \rangle + \langle x_i - x_j , v_j \rangle = - \sum_{i,j=1}^N \langle x_i - x_j, v_i - v_j \rangle.
\end{align*}

From Proposition \ref{prop 3.1}, it holds that $\langle x_i(t), v_i(t) \rangle=0$ and thus
\begin{align}\label{eq 2.6}
%\label{eqn:vt12}
\frac{d\E}{dt} &= \frac{2}{N^2}\sum_{i,j=1}^N \psi_{ij} \langle R_{x_j\shortrightarrow x_i}(v_j) - v_i , v_i \rangle  .
\end{align}

From the symmetric assumption  $\psi_{ij}=\psi_{ji}$, it follows that
\begin{align*}
\frac{d\E}{dt}=\sum_{i,j=1}^N\frac{\psi_{ij}}{N^2}\Big(\langle R_{x_i\shortrightarrow x_j}(v_i)-v_j,v_j\rangle+\langle R_{x_j\shortrightarrow x_i}(v_j)-v_i,v_i\rangle\Big).
\end{align*}
Note that from the properties of operator $R_{x_i\shortrightarrow x_j}$ in Lemmas \ref{lemma 2.3} and \ref{lemma 2.4},
\begin{align*}\langle R_{x_i\shortrightarrow x_j}(v_i),v_j\rangle=\langle v_i,R_{x_j\shortrightarrow x_i}(v_j)\rangle ,\quad \langle v_j,v_j\rangle=\langle R_{x_j\shortrightarrow x_i}(v_j),R_{x_j\shortrightarrow x_i}(v_j)\rangle.
\end{align*}
Therefore, we conclude \eqref{eqn:prop26}.
%that
%\begin{align*}
%\frac{1}{2}\frac{d}{dt}\sum_{i=1}^N\|v_i\|^2=-\sum_{i,j=1}^N\frac{\frac{\psi_{ij}}{N}}{2}\| R_{x_j\shortrightarrow x_i}(v_j)-v_i\|^2.
%\end{align*}
%The remaining part of this proposition is a corollary of the above identity.

%Let the maximal speed be the following quantity.
%\begin{align*}
%\mathcal{V}(t)=\max_{1\leq i\leq N}\|v_i(t)\|.
%\end{align*}
Next, we consider \eqref{eqn:prop261}. By the definition of the energy functional $\E$, for each time $t>0$, we have
\begin{align*}
\mathcal{V}^2(t)\leq N\E_K(t)\leq N\E(t),
\end{align*}
for $\mathcal{V}$ given in \eqref{eqn:prop262}.
From \eqref{eqn:prop26}, we obtain
\begin{align*}
\mathcal{V}^2(t)\leq N\E(0),
\end{align*}
we conclude \eqref{eqn:prop261}.
%Thus, we obtain
%\begin{align*}
%\mathcal{V}(t)\leq \mathcal{V}(0).
%\end{align*}
\end{proof}

\begin{remark}
The energy dissipation in \eqref{eqn:prop26} holds if  $R_{x_i\shortrightarrow x_j}$ satisfies
\begin{equation}\label{essential}
R_{x_i\shortrightarrow x_j}^{-1} = R_{x_i\shortrightarrow x_j}^{T} = R_{x_j\shortrightarrow x_i}.
\end{equation}
Therefore, even if we choose another sphere transformation $T$, if $T$ satisfies \eqref{essential}, then the energy dissipation in \eqref{eqn:prop26} also holds.\end{remark}

\begin{lemma}
\label{lem:con}
For $Q:= (\R^3 \setminus \{0\}) \times (\R^3 \setminus \{0\}) \times \R^3$, a function $T : Q \rightarrow \R^3$ defined by
\begin{align}
\label{eqn:1con}
T(x_1,x_2,v) =\left \{
\begin{aligned}
&\psi \Big( \Big\| \frac{x_1}{\|x_1\|} - \frac{x_2}{\|x_2\|}\Big\| \Big) R_{\small \frac{x_2}{\|x_2\|}\shortrightarrow \frac{x_1}{\|x_1\|}}(v),\quad  &\hbox{ if }  \frac{x_1}{\|x_1\|} + \frac{x_2}{\|x_2\|} \neq 0,\\
&0,\quad  &\hbox{ if } \frac{x_1}{\|x_1\|} + \frac{x_2}{\|x_2\|} = 0
\end{aligned}\right.
\end{align}
is locally Lipschitz continuous in $Q$. Here, $R$ is the rotation operator given in Definition \ref{mat rot} and $\psi$ satisfies assumptions in Theorem~\ref{thm:0}.
\end{lemma}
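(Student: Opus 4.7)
The function $T$ factors through the normalization $x_i \mapsto u_i := x_i/\|x_i\|$, which is $C^\infty$ on $\mathbb R^3\setminus\{0\}$, and is clearly smooth (hence locally Lipschitz) on the open set $\{u_1+u_2\neq 0\}\subset Q$. The work is therefore at a point $(x_1^0,x_2^0,v^0)\in Q$ with $u_1^0+u_2^0 = 0$. There, I would split the rotation matrix into $R(u_2,u_1) = A(u_1,u_2) + B(u_1,u_2)$ with
\begin{equation*}
A := \langle u_1,u_2\rangle I - u_2 u_1^T + u_1 u_2^T,\qquad B := \frac{1}{1+\langle u_1,u_2\rangle}(u_2\times u_1)(u_2\times u_1)^T,
\end{equation*}
using the identity $\|u_2\times u_1\|^2 = 1-\langle u_1,u_2\rangle^2$. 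The piece $\psi(r)A(u_1,u_2)v$ (with $r:=\|u_1-u_2\|$) is a product of $C^1$ and smooth functions, hence locally Lipschitz on $Q$. All the singular behaviour is concentrated in $\psi(r)B$.

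Using $r^2 = 2-2\langle u_1,u_2\rangle$, I would rewrite $1+\langle u_1,u_2\rangle = (2-r)(2+r)/2$, so that
\begin{equation*}
\psi(r) B(u_1,u_2) = h(r)\,\tilde g(u_1,u_2),\quad h(r):=\frac{\psi(r)}{2-r},\quad \tilde g:=\frac{2}{2+r}(u_2\times u_1)(u_2\times u_1)^T.
\end{equation*}
The hypotheses $\psi\in C^1$, $\psi(2)=0$, $\psi'(2)<0$ give (via $\psi(r)=\int_2^r\psi'(s)\,ds$) that $h$ extends continuously to $[0,2]$ with $h(2)=-\psi'(2)>0$, and supply the key identity
\begin{equation*}
h'(r)(2-r) = \psi'(r)+h(r) \longrightarrow \psi'(2)+(-\psi'(2)) = 0\quad\text{as }r\to 2.
\end{equation*}
On the other side, the identity $u_2\times u_1 = (u_1+u_2)\times u_1$ shows that $w := u_2\times u_1$ vanishes linearly in $\delta := u_1+u_2$; an explicit computation yields $\|\tilde g\| = r^2(2-r)/2 = O(2-r)$ and, from $\nabla(ww^T) = (\nabla w)w^T + w(\nabla w)^T$ with $\|w\|=O(\sqrt{2-r})$, also $\|\nabla \tilde g\|=O(\sqrt{2-r})$.

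Combining these on the open set $\{u_1+u_2\neq 0\}$,
\begin{equation*}
\|\nabla(h\tilde g)\| \leq |h'(r)|\,\|\nabla r\|\,\|\tilde g\| + |h(r)|\,\|\nabla\tilde g\| \leq C\,|h'(r)|(2-r) + C\sqrt{2-r},
\end{equation*}
which is uniformly bounded (and in fact tends to $0$) as $r\to 2$. Hence on any small convex neighbourhood $U$ of $(u_1^0,u_2^0)$ the mean value inequality gives $\|h\tilde g(p)-h\tilde g(p')\|\le L\|p-p'\|$ for $p,p'\in U$ with $p+p'$ staying in the smooth region; by continuity of $h\tilde g$ up to the antipodal locus the same estimate extends to all $p,p'\in U$. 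Multiplying by $v$ (linear, hence Lipschitz on bounded sets) and precomposing with the smooth map $x\mapsto x/\|x\|$ yields local Lipschitz continuity of $T$ on $Q$.

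The main obstacle is the fact that under only $\psi\in C^1$ the factor $h(r)$ need not itself be Lipschitz near $r=2$. This is reconciled by the quantitative cancellation encoded in $h'(r)(2-r)=\psi'(r)+h(r)\to 0$, together with the quadratic vanishing of $\|u_2\times u_1\|^2$ in $u_1+u_2$; neither fact alone suffices, and the proof really depends on combining them.
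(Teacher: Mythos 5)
Your proof is correct, but it takes a genuinely different route from the paper's. The paper keeps the normalized cross product $\frac{x_1\times x_2}{\|x_1\times x_2\|}$ from Definition~\ref{mat rot}, and therefore must run a three-case analysis: away from the singular sets, near the coincidence set $A=\{u_1=u_2\}$ (where that unit vector is discontinuous), and near the antipodal set $B$. Its key device is the square-root factorization of the singular part of $\psi R$ as $VV^T$ with $V=\psi^{1/2}\,(1-\langle u_1,u_2\rangle)^{1/2}\frac{x_1\times x_2}{\|x_1\times x_2\|}$, reducing everything to local Lipschitz continuity of the single vector field $V$; the vanishing of $V$ at $B$ comes from Lemma~\ref{lem:psi} (i.e.\ $\psi\lesssim\|u_1+u_2\|^2$) combined with the bound \eqref{eqn:con23}, and near $A$ from $\|V\|\lesssim\|u_1-u_2\|$. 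Your algebraic simplification $(1-\langle u_1,u_2\rangle)\,\|u_2\times u_1\|^{-2}=(1+\langle u_1,u_2\rangle)^{-1}$ removes the coincidence-set singularity outright, so only the antipodal locus needs work, and you organize the cancellation there around the quotient $h(r)=\psi(r)/(2-r)$ and the identity $h'(r)(2-r)=\psi'(r)+h(r)\to 0$ rather than around $\psi^{1/2}$. This buys a shorter case analysis, the slightly stronger conclusion that the gradient tends to zero at the antipodal locus, and the observation that only $\psi\in C^1$ with $\psi(2)=0$ is used (the sign condition $\psi'(2)<0$ is not needed for this lemma). Two small points to make explicit: the phrase ``with $p+p'$ staying in the smooth region'' should read ``with the segment $[p,p']$ staying in the smooth region,'' and the extension of the mean value estimate to segments that meet the antipodal locus deserves a sentence --- that locus has codimension two in $Q$, so such segments can be perturbed away and the Lipschitz bound passes to the limit by the continuity of $h\tilde g$ (which extends by $0$ there). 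Neither affects correctness.
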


\begin{proof}
Let
\begin{align}\nonumber
A:= \left\{ (x_1, x_2, v) \in Q : \frac{x_1}{\|x_1\|} = \frac{x_2}{\|x_2\|} \right\} \hbox{ and } B:= \left\{ (x_1, x_2, v) \in Q : \frac{x_1}{\|x_1\|} = -\frac{x_2}{\|x_2\|} \right\} .
\end{align}
We claim that $T$ is Lipschitz in small neighborhood of $(x_1^*, x_2^*, v^*) \in Q$.

First, we consider the case that $(x_1^*, x_2^*, v^*) \in Q \setminus (A \cup B)$. Note that the first three terms of $\psi R$ for the rotation operator $R$ are Lipschitz continuous in $Q$. It remains to show that the last term is locally Lipschitz continuous in $Q$.  We denote the last term of $\psi R$ by $VV^T v$, where $V: Q \to \R^n$ is given by
\begin{align}
\nonumber
V(x_1, x_2,v) =\left \{
\begin{aligned}
&\psi\left(\left\|\frac{x_1}{\|x_1\|}-\frac{x_2}{\|x_2\|}\right\|\right)^{\frac{1}{2}} \Big(1 - \Big\langle \frac{x_1}{\|x_1\|} , \frac{x_2}{\|x_2\|} \Big\rangle \Big)^{\frac{1}{2}} \frac{x_1 \times x_2}{\| x_1 \times x_2 \|},\quad  &\hbox{ in } Q\setminus(A\cup B),\\
&0,\quad  &\hbox{ in } A \cup B.
\end{aligned}\right.
\end{align}
In a small neighborhood of $~(x_1^*, x_2^*, v^*) \in Q \setminus (A \cup B)$, $~\| x_1 \times x_2\|$ is nonzero and  $V(x_1,x_2)V(x_1,x_2)^T v$ is Lipschitz continuous with respect to $x_1$, $x_2$ and $v$.

Next, we consider the second case, $(x_1^*, x_2^*, v^*) \in A$. As the previous step, we focus on the last term of $\psi R$ by $VV^T v$. By the definition of $V$,
\begin{align}
\label{eqn:con12}
\| V(x_1, x_2,v) \| \leq C_1 \left(1 - \left\langle \frac{x_1}{\|x_1\|} , \frac{x_2}{\|x_2\|} \right\rangle \right)^{\frac{1}{2}} = \frac{C_1}{\sqrt{2}} \left\| \frac{x_1}{\|x_1\|} - \frac{x_2}{\|x_2\|} \right\|,
\end{align}
for some constant $C_1>0$. Furthermore, \eqref{eqn:con12} and $  V(x_1^*, x_2^*,v^*) = 0$ at $x_1^*/\|x_1^*\| = x_2^*/\|x_2^*\|$ imply that
\begin{align}
\begin{aligned}
\label{eqn:con13}
\| V(x_1, x_2,v) - V(x_1^*, x_2^*,v^*) \| &\leq \frac{C_1}{\sqrt{2}}  \left\| \frac{x_1}{\|x_1\|} - \frac{x_1^*}{\|x_1^*\|} - \frac{x_2}{\|x_2\|} + \frac{x_2^*}{\|x_2^*\|} \right\|\\&  \leq \frac{C_2 \| (x_1, x_2) - (x_1^*, x_2^*) \|}{\min\{ \|x_1^*\|, \|x_2^*\|\}}
\end{aligned}
\end{align}
for some constant $C_2>0$. Here, we used the following simple inequality:
for any $a,b \in \R^n \setminus \{0\}$,
\begin{align}\nonumber
\left\| \frac{a}{\|a\|} - \frac{b}{\|b\|} \right\| \leq \left\| \frac{a-b}{\|a\|} \right\| + \left\| \frac{b}{\|a\|} - \frac{b}{\|b\|} \right\| \leq  \frac{2\|a-b\|}{\|a\|}.
\end{align}

We choose a small ball of center $(x_1^*, x_2^*,v^*)$ in $Q \setminus B$. For any two points $(x_1, x_2, v)$ and $(y_1, y_2, w)$ in the ball, if either $(x_1, x_2, v)$ or $(y_1, y_2, w)$ in $A$, we apply \eqref{eqn:con13} to obtain the Lipschitz constant
\begin{align}\nonumber
\frac{C_2}{\min\{ \|x_1\|, \|x_2\|\}} \hbox{ or } \frac{C_2}{\min\{ \|y_1\|, \|y_2\|\}}.
\end{align}
If both $(x_1, x_2, v)$ or $(y_1, y_2, w)$ are in $Q \setminus (A \cup B)$, we consider the line segment $\alpha (x_1, x_2, v) + (1- \alpha) (y_1, y_2, w)$ for all $\alpha \in [0,1]$. If the line segment intersects with $A$, then we apply \eqref{eqn:con13} with the triangle inequality to obtain the Lipschitz constant. Thus, it is enough to consider
\begin{align}\nonumber
\{  \alpha (x_1, x_2, v) + (1- \alpha) (y_1, y_2, w) : 0 \leq \alpha \leq 1 \} \subset Q \setminus (A \cup B).
\end{align}
We point out that $V$ is differentiable at $(x_1, x_2, v)$ in $Q \setminus (A \cup B)$ and
\begin{align}\nonumber
\left\| \frac{\partial V}{\partial x_1} (x_1, x_2, v) \right\|, \quad \left\| \frac{\partial V}{\partial x_2} (x_1, x_2, v) \right\| \leq \frac{C_0}{\min\{\|x_1\|, \|x_2\|\}}
\end{align}
for some constant $C_0>0$. By the fundamental theorem of calculus, %for any two points $(x_1, x_2, v)$ and $(y_1, y_2, w)$ in $Q$ such that
we have
\begin{align}
\label{eqn:con10}
\| V(x_1, x_2, v) - V(y_1, y_2, w) \| \leq \frac{2C_0\| (x_1, x_2, v) - (y_1, y_2, w)\|}{\displaystyle\min_{i =1,2}\min_{0 \leq \alpha \leq 1} \| \alpha x_i + (1- \alpha) y_i\| } ,
\end{align}
%Applying \eqref{eqn:con10} and \eqref{eqn:con13} with the triangle inequality, %If the line segment contains a point in $A$, \eqref{eqn:con12} combining with the triangle inequality implies
and we conclude that $V$ is Lipschitz in the small ball of center $(x_1^*, x_2^*,v^*)$. %Therefore, $T$ is locally Lipschitz at $(x_1^*, x_2^*,v^*)$.

%we conclude that $V$ is locally Lipschitz at $(x_1^*, x_2^*)$ and $T$ is locally Lipschitz at $(x_1^*, x_2^*,v^*)$.

\medskip

Lastly, we consider the case that $(x_1^*, x_2^*, v^*) \in B$.
%\begin{align*}\frac{x_1^*}{\|x_1^*\|} = -\frac{x_2^*}{\|x_2^*\|}.\end{align*}
We denote $(x_1^*, x_2^*) = (k_1a,-k_2a)$ for some $a \in \mathbb{S}^2$ and $k_1, k_2 >0$. %at $\frac{x_1}{\|x_1\|}$ and $\frac{x_2}{\|x_2\|}$ such that $\frac{x_1}{\|x_1\|} = -\frac{x_2}{\|x_2\|}$.
%is continuous with respect to $\frac{x_1}{\|x_1\|}$ and $\frac{x_2}{\|x_2\|}$.
Since $\displaystyle \frac{1}{2}\Big\|\frac{x_1}{\|x_1\|} + \frac{x_2}{\|x_2\|} \Big\|^2 = 1+\Big\langle \frac{x_1}{\|x_1\|}, \frac{x_2}{\|x_2\|} \Big\rangle$ and  $\| x_1 \times x_2\|^2 = \|x_1\|^2 \|x_2\|^2 - \langle x_1, x_2 \rangle^2$, it holds that
\begin{align}\begin{aligned}
\label{eqn:con23}
\bigg\| \Big(1 - \Big\langle \frac{x_1}{\|x_1\|} , \frac{x_2}{\|x_2\|} \Big\rangle \Big)^{\frac{1}{2}} \frac{x_1 \times x_2}{\| x_1 \times x_2\|}\bigg\| & =
\bigg\|\frac{\big(\|x_1\|\|x_2\| - \langle x_1 ,x_2 \rangle \big)^{\frac{1}{2}} }{\sqrt{\|x_1\|\|x_2\|}} \frac{x_1 \times x_2}{\| x_1 \times x_2\|}\bigg\| \\
& =
\bigg\|\frac{1}{\sqrt{\|x_1\|\|x_2\|}}  \frac{x_1 \times x_2}{
\big(\|x_1\|\|x_2\| + \langle x_1 ,x_2 \rangle \big)^{\frac{1}{2}}
}\bigg\| \\
&=  \sqrt{2} ~\bigg\|\frac{ \big(\frac{x_1}{\|x_1\|} + \frac{x_2}{\|x_2\|}\big) \times \frac{x_2}{\|x_2\|}}{\| {\frac{x_1}{\|x_1\|} + \frac{x_2}{\|x_2\|}}\|} \bigg\|\\& \leq \sqrt{2}.
\end{aligned}
\end{align}
From \eqref{eqn:con23} and Lemma~\ref{lem:psi} below, we have
\begin{align}
\label{eqn:con24}
\left\| V(x_1, x_2, v) \right\| \leq C_3 \left\| \frac{x_1}{\|x_1\|} + \frac{x_2}{\|x_2\|} \right\|
\end{align}
for some constant $C_3>0$. Thus, $V(k_1a, -k_2 a) = 0$ and \eqref{eqn:con24} imply
\begin{align}
\begin{aligned}
\label{eqn:con31}
\| V(k_1a, -k_2a,v^*) - V(x_1, x_2,v)\| &\leq C_3\bigg\| \frac{x_1}{\|x_1\|}+ \frac{x_2}{\|x_2\|} \bigg\|\\
&=C_3\left\| \frac{x_1}{\|x_1\|} - \frac{k_1 a}{\|k_1 a\|} + \frac{x_2}{\|x_2\|} - \frac{- k_2 a}{\|-k_2 a\|} \right\|\\
&\leq \frac{C_4 \| (k_1a, -k_2a) - (x_i, x_j) \|}{\min\{k_1,k_2\}}
\end{aligned}
\end{align}
for some constant $C_4>0$, which implies that \begin{align*}\| V(k_1a, -k_2a, v^*) - V(x_1, x_2,v)\|  \rightarrow 0 \quad \mbox{as} \quad  (x_1, x_2,v) \rightarrow (k_1a, -k_2a,v^*).\end{align*}
Thus $V$ is continuous at $(k_1 a,-k_2a, v^*)$. As discussed in the second case, we consider a small ball of center $(x_1^*, x_2^*, v^*)$ in $Q\setminus A$ and apply \eqref{eqn:con10} and \eqref{eqn:con31}. This shows that $\psi VV^T$ and $T$ are Lipschitz continuous in a small ball of center $(x_1^*, x_2^*, v^*)$. %the third term of $R_{x_j\shortrightarrow x_i}$, $T$ is continuous at $(k_1 a,-k_2a, v)$ for any $v \in \R^3$ and we conclude.
\end{proof}

\begin{lemma}
\label{lem:psi}
For $\psi$ satisfying \eqref{eqn:psi}, there exists $C>0$ such that for all $x, y \in \mathbb{S}^2$
\begin{align}
 \frac{\| x + y \|^2}{C}\leq \psi(\|x - y\|) \leq C \| x + y \|^2.
\end{align}
\end{lemma}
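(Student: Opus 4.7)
The plan is to reduce everything to a one-variable inequality on $r := \|x-y\| \in [0,2]$, exploiting the algebraic identity
\[
\|x+y\|^2 + \|x-y\|^2 = 2\|x\|^2 + 2\|y\|^2 = 4
\]
for $x,y\in\mathbb{S}^2$. Thus $\|x+y\|^2 = (2-r)(2+r)$, and since $2+r \in [2,4]$, I get the two-sided bound
\[
2(2-r) \;\leq\; \|x+y\|^2 \;\leq\; 4(2-r).
\]
So the lemma reduces to showing that $\psi(r)$ is comparable to $(2-r)$ on $[0,2]$ up to multiplicative constants.

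To handle this, I would introduce the auxiliary function $g:[0,2)\to(0,\infty)$ defined by $g(r) := \psi(r)/(2-r)$. Since $\psi$ is $C^1$ with $\psi(2)=0$, a Taylor expansion (equivalently L'Hôpital) gives $\lim_{r\to 2^-} g(r) = -\psi'(2) > 0$, so $g$ extends to a continuous function on the compact interval $[0,2]$ with $g(2) = -\psi'(2) > 0$. I must also verify that $g$ is strictly positive on $[0,2)$: because $\psi$ is decreasing with $\psi(2)=0$ and $\psi'(2)<0$, any vanishing of $\psi$ at some $r_0 < 2$ would by monotonicity force $\psi \equiv 0$ on $[r_0,2]$, contradicting $\psi'(2)<0$. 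Hence $\psi > 0$ on $[0,2)$, so $g > 0$ on all of $[0,2]$.

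By continuity and compactness, there exist constants $0 < m \leq M < \infty$ with $m \leq g(r) \leq M$ on $[0,2]$, i.e.
\[
m(2-r) \;\leq\; \psi(r) \;\leq\; M(2-r) \qquad \text{for all } r\in[0,2].
\]
Combining this with the earlier bound $2(2-r) \leq \|x+y\|^2 \leq 4(2-r)$ yields
\[
\frac{m}{4}\|x+y\|^2 \;\leq\; \psi(\|x-y\|) \;\leq\; \frac{M}{2}\|x+y\|^2,
\]
so the conclusion holds with $C := \max\{4/m,\ M/2\}$.

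There is no real obstacle here; the only conceptual point to be careful about is ensuring $g$ does not degenerate, which is precisely why the hypothesis $\psi'(2)<0$ (rather than merely $\psi(2)=0$) is imposed in \eqref{eqn:psi}. Everything else is the identity $\|x+y\|^2+\|x-y\|^2=4$ plus a compactness argument.
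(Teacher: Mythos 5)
Your proof is correct and follows essentially the same route as the paper: both reduce the statement to the comparison $\psi(r)\asymp 2-r$ on $[0,2]$ and then use the identity $\|x+y\|^2=(2-\|x-y\|)(2+\|x-y\|)$ with $2+\|x-y\|\in[2,4]$. Your compactness argument via $g(r)=\psi(r)/(2-r)$ simply spells out in more detail the lower bound $\psi(a)\geq C_2(2-a)$ that the paper asserts tersely from \eqref{eqn:psi}.
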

\begin{proof}
From the Lipschitzness of $\psi$ and $\psi(2) = 0$, we have $\psi(a) \leq C_1(2-a)$ for $a \in [0,2]$. On the other hand, as $x, y \in \mathbb{S}^2$,
\[2 - \|x - y\| = \frac{\|x + y\|^2}{2+\|x - y\|} \leq \frac{1}{2}\|x + y\|^2.\]
As a consequence, we conclude that
\begin{align*}
\psi(\|x - y\|) \leq C_1(2-\|x - y\|) \leq \frac{C_1}{2} \|x + y\|^2.
\end{align*}
Similarly, from \eqref{eqn:psi}, there exists $C_2>0$ such that $\psi(a) \geq C_2(2-a)$ for all $a \in [0,2]$. Note that
 \[2 - \|x - y\| = \frac{\|x + y\|^2}{2+\|x - y\|} \geq \frac{1}{4}\|x + y\|^2.\] Thus, we conclude that
\begin{align*}
\psi(\|x - y\|) \geq C_2(2-\|x - y\|) \geq \frac{C_2}{4} \|x + y\|^2.
\end{align*}

\end{proof}

We are ready to prove the existence and uniqueness theorem.

\begin{proof}[Proof of Theorem~\ref{thm:wel}]
We consider the following system of ordinary differential equations:
\begin{align}
\begin{aligned}\label{eqn:wel11}
\dot{x}_i&=v_i,\\
\dot{v}_i&=-\frac{\|v_i\|^2}{\|x_i\|^2}x_i+\sum_{j=1}^N\frac{\psi_{ij}}{N}(T(x_i, x_j,v_j)-v_i)+\sum_{k=1}^N \frac{\sigma }{N}(\|x_i\|^2x_k - \langle x_i,x_k \rangle  x_i),
\end{aligned}
\end{align}
where $T(\cdot, \cdot,\cdot)$ is given in \eqref{eqn:1con}.
From the parallel argument of Proposition~\ref{prop 3.1}, if $(x_i,v_i)_{i=1}^N$ is a solution of \eqref{eqn:wel11}, then $\{x_i\}_{i=1}^N \subset \mathbb{S}^2$. Therefore, $(x_i,v_i)_{i=1}^N$ is also a solution of \eqref{main}. It remains to show that the solution to \eqref{eqn:wel11} uniquely exists for all time $t>0$.

For the admissible initial data, Lemma~\ref{lem:con} implies that the right-hand side of \eqref{eqn:wel11} is Lipschitz continuous with respect to $(x_i,v_i)_{i=1}^N$ in a small neighborhood of $(x_i(0),v_i(0))_{i=1}^N$ in $\R^{6N}$. From the Picard-Lindel\"{o}f Theorem (See \cite[Theorem 2.2]{Tes12}), a solution of \eqref{eqn:wel11} exists in an interval $[0,\epsilon]$ for some $\epsilon>0$.

We now prove that the solution of \eqref{eqn:wel11} uniquely exists for all $t>0$. Suppose that $I_M=[0,t_{M})$ is the maximal interval of existence of a solution starting at $t=0$. Proposition~\ref{prop 3.1} implies that $(x_i)_{i=1}^N \subset \mathbb{S}^2$. Furthermore, from the energy inequality in Proposition~\ref{prop 2.6}, we conclude that $(v_i)_{i=1}^N$ are uniformly bounded for all time $t>0$. As $(x_i,v_i)_{i=1}^N$ are uniformly bounded, we can apply the extensibility of solutions in \cite[Corollary 2.2]{Tes12} and conclude that $t_{M} =  \infty$.
\end{proof}
~
\newline
\section{Flocking theorem}
\setcounter{equation}{0}
In this section, we prove the flocking theorem for the unit sphere model.  Because of the curved geometry, we adapt a vector difference $v_j-v_i$ of the original C-S model to $R_{x_j\shortrightarrow x_i}(v_j)-v_i$. Mainly, this new term causes difficulty in analyzing the asymptotic behavior of the solution to the C-S model on a unit sphere. As we mentioned before, we cannot use the Lyapunov functional approach that is  used in the previous articles to obtain the flocking theorem for $\bbr^d$.   Our plan for the proof of the main theorem contains the following three steps. First, we prove the modified version of  Barbalat's lemma in which an integrable function with bounded derivative converges to zero. Then, Proposition \ref{prop 2.6} implies that
$\psi_{ij}\| R_{x_j\shortrightarrow x_i}(v_j)-v_i\|^2$
is integrable with respect to time. Finally, from Lemma~\ref{lemma 3.4}, we verify that the above quantity has a bounded time derivative, and this fact combined with the Barbalat type lemma yields the velocity alignment result for $\sigma\geq 0$ and the flocking estimate for $\sigma>0$ under a sufficient condition for the initial data. We notice that  we need to control the diameter of position difference to obtain flocking behavior. To control the diameter, we consider the additional bonding force under the initial data condition: $2\sigma>N^2\E(0)$.

The following lemma provides our main framework for the velocity alignment.

\begin{lemma}\label{lemma 3.3}
Suppose that a continuous nonnegative function $f : [0,\infty) \to \mathbb{R}$ satisfies
\begin{align}
\label{eqn:1lem33}
\lim_{b\to \infty}\int_a^b f(\tau)d\tau<\infty
\end{align}
%\begin{align*}\lim_{b\to \infty}\int_a^b f(\tau)d\tau<\infty,\end{align*}
for some constant $a\in \bbr$. Assume that on the support $\{t\in [a,\infty):f(t)>0\}$, $f$ has uniformly bounded time-derivative: that is, there is a constant $C>0$ such that
\begin{align}
\label{eqn:11lem33}
\Big|\frac{df}{dt}\Big|<C \quad \mbox{on}~ \quad \{t\in [a,\infty):f(t)>0\}.
\end{align}
%\begin{align*}\Big|\frac{df}{dt}\Big|<C \quad \mbox{on}~ \quad \{t\in [a,\infty):f(t)>0\}.\end{align*}
Then %$f(t)$ converges to zero as $t$ goes to infinity:
\begin{align}
\label{eqn:2lem33}
\lim_{t\to \infty} f(t)=0.
\end{align}

\end{lemma}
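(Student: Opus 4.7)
\medskip
\noindent\textbf{Proof plan.} The plan is to argue by contradiction and to extract from the failure of \eqref{eqn:2lem33} a sequence of disjoint intervals on which $f$ is uniformly bounded below, which would violate the finiteness of the integral \eqref{eqn:1lem33}. The novelty compared to the classical Barbalat lemma is that the derivative bound is assumed only on the open set $\{f>0\}$, so some care is needed in transferring this control to a genuine uniform continuity estimate valid near points where $f$ is large.

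\medskip
\noindent\textbf{Step 1: Set up the contradiction.} Suppose that $f(t)$ does not tend to $0$ as $t\to\infty$. Then there exist $\varepsilon>0$ and a sequence $t_n\uparrow\infty$ with $f(t_n)\ge \varepsilon$. The goal is to produce a sequence of disjoint time intervals on each of which $f$ is at least $\varepsilon/2$, contradicting \eqref{eqn:1lem33}.

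\medskip
\noindent\textbf{Step 2: Propagate the lower bound around each $t_n$.} Set $\delta := \varepsilon/(2C)$. I claim that $f(t)\ge \varepsilon/2$ for all $t\in[t_n-\delta,t_n+\delta]\cap[a,\infty)$. Indeed, since $f(t_n)\ge\varepsilon>0$ and $f$ is continuous, $f$ is strictly positive on a maximal open interval $I_n\subset[a,\infty)$ containing $t_n$. On $I_n$ the bound \eqref{eqn:11lem33} applies, so the fundamental theorem of calculus yields
\begin{equation*}
f(t)\ge f(t_n)-C|t-t_n|\ge \varepsilon - C|t-t_n|\ge \varepsilon/2
\quad\text{for all }t\in I_n\cap[t_n-\delta,t_n+\delta].
\end{equation*}
If the interval $[t_n-\delta,t_n+\delta]$ were not contained in $I_n$, then by continuity $f$ would vanish at some endpoint of $I_n$ inside $[t_n-\delta,t_n+\delta]$, contradicting the previous inequality. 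Hence the lower bound $f\ge \varepsilon/2$ holds on the full interval $[t_n-\delta,t_n+\delta]$.

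\medskip
\noindent\textbf{Step 3: Disjointify and conclude.} By passing to a subsequence, assume $t_{n+1}-t_n>2\delta$ for all $n$, so that the intervals $J_n:=[t_n-\delta,t_n+\delta]$ are pairwise disjoint and each of length $2\delta$. Then
\begin{equation*}
\int_a^{\infty}f(\tau)\,d\tau\ge \sum_{n=1}^{\infty}\int_{J_n}f(\tau)\,d\tau\ge \sum_{n=1}^{\infty}\frac{\varepsilon}{2}\cdot 2\delta=\sum_{n=1}^{\infty}\frac{\varepsilon^{2}}{2C}=+\infty,
\end{equation*}
which contradicts \eqref{eqn:1lem33}. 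Therefore \eqref{eqn:2lem33} must hold.

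\medskip
\noindent\textbf{Main obstacle.} The only delicate point is Step 2: because \eqref{eqn:11lem33} is assumed only on the support $\{f>0\}$, one must use the continuity of $f$ to rule out the possibility that $f$ drops from $\varepsilon$ down to $0$ on a time scale shorter than $\delta$. The argument above resolves this by exploiting the maximality of the connected component $I_n$ of the support containing $t_n$: a drop to zero inside $[t_n-\delta,t_n+\delta]$ would already contradict the linear lower bound derived from \eqref{eqn:11lem33}, so no such drop can occur.
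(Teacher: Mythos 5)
Your proof is correct and follows essentially the same route as the paper's: a contradiction argument that propagates the lower bound $f(t_n)\ge\varepsilon$ to intervals of length $\varepsilon/C$ around each $t_n$ via the derivative bound, passes to a subsequence to make these intervals disjoint, and contradicts the integrability hypothesis \eqref{eqn:1lem33}. Your Step 2, which invokes the maximal connected component of $\{f>0\}$ containing $t_n$ to justify applying \eqref{eqn:11lem33} on the full interval $[t_n-\delta,t_n+\delta]$, makes explicit a continuity point that the paper's argument leaves implicit.
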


We  postpone the proof until Appendix~\ref{ap:0}. Next, we estimate the time-derivative of $R$.
\begin{lemma}\label{lemma 4.3}
For any $(x_i,v_i)_{i=1}^N$ satisfying $\dot{x}_i=v_i$ for all $i \in \{1,2, \cdots, N\}$, we assume that
\begin{align*}\|x_i\|=1,\quad \langle v_i,x_i\rangle=0,\quad  \|v_i\|\leq \mathcal{V}_{max} \hbox{ and } ~x_i  \ne - x_j \hbox{ for any } i,j \in \{1,2, \cdots, N\}.\end{align*}
%Let $\dot{x}_i=v_i$, $i\in \{1,\ldots,N\}$ and
%\begin{align*}x_i+x_j\ne0,\quad \mbox{for any}~i,j\in \{1,\ldots,N\}.\end{align*}
%Assume that
%\begin{align*}\|x_i\|=1,\quad \langle v_i,x_i\rangle=0,\quad \|v_i\|\leq \mathcal{V}_{max}.\end{align*}
Then, the following holds for some constant $C>0$.
\begin{align*}
\Big\|\frac{dR(x_j,x_i)}{dt}\Big\|\leq C \mathcal{V}_{max}+\frac{C}{\| x_i+x_j\|} \mathcal{V}_{max}.
\end{align*}
\end{lemma}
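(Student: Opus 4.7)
The plan is to split $R(x_j,x_i)$ into the four explicit summands of Definition~\ref{mat rot}, bound the derivatives of the three algebraic summands trivially, and concentrate all the work on the singular fourth piece. Writing
\[
R(x_j, x_i) = \langle x_j, x_i\rangle I - x_j x_i^T + x_i x_j^T + (1-\langle x_j,x_i\rangle)\hat u \hat u^T,
\qquad \hat u := \frac{x_j\times x_i}{\|x_j\times x_i\|},
\]
the time derivatives of the first three summands consist entirely of terms of the form $\langle v_\bullet,x_\bullet\rangle I$, $v_\bullet x_\bullet^T$ and $x_\bullet v_\bullet^T$, each controlled in operator norm by $C\mathcal{V}_{max}$ using $\|x_k\|=1$ and $\|v_k\|\leq \mathcal{V}_{max}$. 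The entire difficulty is to bound the derivative of $F:=(1-\langle x_j,x_i\rangle)\hat u \hat u^T$.

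The key step is an algebraic simplification of $F$. Using $\|x_j\times x_i\|^2 = 1-\langle x_j,x_i\rangle^2 = (1-\langle x_j,x_i\rangle)(1+\langle x_j,x_i\rangle)$ and $1+\langle x_j,x_i\rangle = \tfrac12\|x_i+x_j\|^2$, the prefactor $(1-\langle x_j,x_i\rangle)$ cancels and
\[
F = \frac{(x_j\times x_i)(x_j\times x_i)^T}{1+\langle x_j,x_i\rangle} = \frac{2(x_j\times x_i)(x_j\times x_i)^T}{\|x_i+x_j\|^2}.
\]
Combining this with the vector identity $x_j\times x_i = -\tfrac12(x_i+x_j)\times(x_i-x_j)$, I would rewrite $F$ in the symmetric form
\[
F = \tfrac{1}{2}\bigl[\hat s\times(x_i-x_j)\bigr]\bigl[\hat s\times(x_i-x_j)\bigr]^T,
\qquad \hat s := \frac{x_i+x_j}{\|x_i+x_j\|}.
\]
In this form the apparent $1/\|x_i+x_j\|^2$ singularity is absorbed into the unit vector $\hat s$, and $\|\hat s\times(x_i-x_j)\|\leq 2$ holds automatically.

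Now I would apply the product rule to $F$: it suffices to estimate $\tfrac{d}{dt}[\hat s\times(x_i-x_j)] = \dot{\hat s}\times(x_i-x_j) + \hat s\times(v_i-v_j)$. The second summand is bounded by $2\mathcal{V}_{max}$. For the first, direct differentiation yields
\[
\dot{\hat s} = \frac{v_i+v_j}{\|x_i+x_j\|} - \frac{\langle v_i+v_j,\, x_i+x_j\rangle}{\|x_i+x_j\|^3}(x_i+x_j),
\]
so $\|\dot{\hat s}\| \leq 4\mathcal{V}_{max}/\|x_i+x_j\|$ and therefore $\|\dot{\hat s}\times(x_i-x_j)\| \leq 8\mathcal{V}_{max}/\|x_i+x_j\|$. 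Summing the four contributions gives exactly $\|dR(x_j,x_i)/dt\|\leq C\mathcal{V}_{max}+C\mathcal{V}_{max}/\|x_i+x_j\|$.

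The main obstacle I expect is precisely spotting the rewriting in the second paragraph. A naive differentiation of $F$ as written in Definition~\ref{mat rot} produces a term of the form $ww^T\dot\alpha/\alpha^2$ with $w=x_j\times x_i$ and $\alpha = 1+\langle x_j,x_i\rangle$, whose crude bound is of order $1/\|x_i+x_j\|^2$, one power too strong for the stated inequality. The identity $x_j\times x_i=-\tfrac12(x_i+x_j)\times(x_i-x_j)$ reveals a hidden factor of $\|x_i+x_j\|$ inside $w$ itself, and this hidden factor is what cancels one power of the singularity and makes the bound of Lemma~\ref{lemma 4.3} sharp.
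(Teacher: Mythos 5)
Your proof is correct and follows the same overall architecture as the paper's Appendix B: split off the three polynomial summands of $R(x_j,x_i)$, whose derivatives are trivially $O(\mathcal{V}_{max})$, and extract a hidden factor of $\|x_i+x_j\|$ from the cross product to reduce the apparent $\|x_i+x_j\|^{-2}$ singularity of the fourth summand to $\|x_i+x_j\|^{-1}$. The difference is in the bookkeeping for that fourth term. The paper uses the identity $x\times y=(x+y)\times y$ and, for the term produced by differentiating the scalar prefactor $(1+\langle x_i,x_j\rangle)^{-1}$, it must additionally invoke the orthogonality relations $\langle v_i,x_i\rangle=\langle v_j,x_j\rangle=0$ to convert $\langle v_i,x_j\rangle+\langle x_i,v_j\rangle$ into $\langle v_i,x_i+x_j\rangle+\langle x_i+x_j,v_j\rangle$ and gain the last factor of $\|x_i+x_j\|$. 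Your symmetrized packaging $F=\tfrac12\,[\hat s\times(x_i-x_j)][\hat s\times(x_i-x_j)]^T$ with $\hat s=(x_i+x_j)/\|x_i+x_j\|$ is slightly cleaner: the prefactor derivative appears as $\langle v_i+v_j,x_i+x_j\rangle/\|x_i+x_j\|^{3}$, and Cauchy--Schwarz against $x_i+x_j$ already supplies the needed factor, so the hypothesis $\langle v_i,x_i\rangle=0$ is never used in this step. One harmless slip: since $(x_i+x_j)\times(x_i-x_j)=2\,x_j\times x_i$, your identity should read $x_j\times x_i=+\tfrac12(x_i+x_j)\times(x_i-x_j)$; the sign is immaterial because only the rank-one matrix $ww^T$ enters the estimate.
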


We also postpone the proof until Appendix~\ref{ap:1}, as it is technical.

\begin{lemma}\label{lemma 3.4}Let $(x_i,v_i)_{i=1}^N $ be the solution to system \eqref{main} subject to the admissible initial data $(x_i(0),v_i(0))_{i=1}^N $. We assume that   %for any $i,j$,
$\psi_{ij}=\psi_{ji}$ and
\begin{align*}x_i \neq - x_j,~ \hbox{ for all } i,j \in \{1,2, \cdots, N\}.\end{align*}
%\begin{align*}1+\langle x_i,x_j\rangle>0,~ \mbox{for some time interval}~ t\in (0,t_1).\end{align*}
Then, it holds that
%for the interval $t\in (0,t_1)$,
\begin{align*}
\Big|\frac{d}{dt}\| R_{x_j\shortrightarrow x_i}(v_j)-v_i\|^2
\Big|&\leq
C\Big((N\E(0))^{3/2} +\frac{(N\E(0))^{3/2}}{\| x_i+x_j\|}
+\max_{1\leq i,k\leq N} \psi_{ik}N\E(0)
\Big).
\end{align*}
Here, $C>0$ is a generic constant and $\E$ is given in \eqref{eqn:e}.
%the maximal speed is given by
%\begin{align}
%\label{eqn:v}
%\mathcal{V}(t) :=\max_{1\leq i\leq N}\|v_i(t)\|.
%\end{align}
\end{lemma}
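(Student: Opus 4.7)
The plan is to differentiate $\|R(x_j,x_i)v_j - v_i\|^2$ using the product rule and then bound each resulting factor with the a priori estimates already established. Write $W := R(x_j,x_i)v_j - v_i$. Since $R$ is an isometry by Lemma~\ref{lemma 2.4} and $\mathcal V(t)\le \sqrt{N\E(0)}$ by Proposition~\ref{prop 2.6}, one has $\|W\| \le \|v_j\|+\|v_i\| \le 2\sqrt{N\E(0)}$ and $\|R(x_j,x_i)\dot v_j\| = \|\dot v_j\|$. Consequently,
\[
\Big|\tfrac{d}{dt}\|W\|^2\Big| = 2\big|\langle \dot R(x_j,x_i) v_j + R(x_j,x_i)\dot v_j - \dot v_i,\, W\rangle\big| \;\le\; 2\sqrt{N\E(0)}\,\big(\|\dot R(x_j,x_i)\|\,\|v_j\| + \|\dot v_j\| + \|\dot v_i\|\big).
\]
So it remains to bound the three factors on the right.

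For the accelerations, I would substitute the second equation of \eqref{main} and invoke Proposition~\ref{prop 3.1} so that $\|x_i\|=1$ and $\langle v_i,x_i\rangle = 0$. The centripetal piece contributes $\|v_i\|^2/\|x_i\|^2 \le N\E(0)$; the alignment piece contributes at most $\tfrac{1}{N}\sum_j \psi_{ij}\cdot 2\mathcal V(t) \le 2\max_{i,k}\psi_{ik}\sqrt{N\E(0)}$; and each summand of the bonding term satisfies $\|x_k - \langle x_i,x_k\rangle x_i\|^2 = 1 - \langle x_i,x_k\rangle^2 \le 1$, so the bonding contribution is at most $\sigma$. Hence both $\|\dot v_i\|$ and $\|\dot v_j\|$ are controlled by $N\E(0) + 2\max_{i,k}\psi_{ik}\sqrt{N\E(0)} + \sigma$. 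Multiplying by the prefactor $2\sqrt{N\E(0)}$ produces precisely the $(N\E(0))^{3/2}$ term and the $\max_{i,k}\psi_{ik}\cdot N\E(0)$ term in the claim, with the $\sigma$-dependent piece absorbed into the generic constant $C$.

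The only nontrivial step is the $\|\dot R(x_j,x_i)\|\,\|v_j\|$ contribution, where a naive bound on $\dot R$ is singular near antipodal configurations. This is exactly the obstacle resolved by Lemma~\ref{lemma 4.3}, which provides
\[
\|\dot R(x_j,x_i)\| \;\le\; C\mathcal V(t) + \frac{C\mathcal V(t)}{\|x_i+x_j\|} \;\le\; C\sqrt{N\E(0)}\Big(1 + \frac{1}{\|x_i+x_j\|}\Big).
\]
Combining with $\|v_j\|\,\|W\| \le 2N\E(0)$ gives precisely the $(N\E(0))^{3/2}$ and $(N\E(0))^{3/2}/\|x_i+x_j\|$ contributions in the claim. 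Summing all three estimates yields the desired inequality. The delicate point of the entire argument is therefore not this lemma but rather Lemma~\ref{lemma 4.3}: once the singular behavior of $\dot R$ has been explicitly isolated into the $1/\|x_i+x_j\|$ factor there, the derivation above is routine bookkeeping with the a priori bounds from Propositions~\ref{prop 3.1} and \ref{prop 2.6}.
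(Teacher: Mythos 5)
Your argument is essentially the paper's own proof: the same product-rule decomposition of $\frac{d}{dt}\|R_{x_j\shortrightarrow x_i}(v_j)-v_i\|^2$ into the $\dot R\, v_j$, $R\,\dot v_j$, and $\dot v_i$ contributions, the same reliance on Lemma~\ref{lemma 4.3} to isolate the $1/\|x_i+x_j\|$ singularity, and the same a priori bounds from Propositions~\ref{prop 3.1} and~\ref{prop 2.6}. The only deviations are minor: the paper annihilates the centripetal term exactly via the orthogonality $\langle x_i, R_{x_j\shortrightarrow x_i}(v_j)-v_i\rangle=0$ and handles $R\,\dot v_j$ by reducing $I_2^{ij}$ to $I_3^{ji}$ through $R^T=R^{-1}$, where you use cruder norm bounds that land on the same $(N\E(0))^{3/2}$ and $\max\psi_{ik}\,N\E(0)$ terms; and you explicitly retain the bonding-force contribution (of size $\sigma\sqrt{N\E(0)}$), which the paper's displayed computation silently drops --- absorbing it into $C$ makes $C$ depend on $\sigma$ and does not literally fit the stated right-hand side when $N\E(0)$ is small, but this is harmless for the lemma's only use (uniform-in-time boundedness of $\|x_i+x_j\|\,|\frac{d}{dt}\|R_{x_j\shortrightarrow x_i}(v_j)-v_i\|^2|$) and is, if anything, more careful than the original.
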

\begin{proof}
Note that
\begin{align*}
\frac{1}{2}\frac{d}{dt}\| R_{x_j\shortrightarrow x_i}(v_j)-v_i\|^2
&=\Big\langle  \frac{d(R(x_j,x_i)\cdot v_j)}{dt} -\frac{dv_i}{dt},  R(x_j,x_i)\cdot v_j-v_i\Big\rangle
\\
&=
\Big\langle \frac{dR(x_j,x_i)}{dt}\cdot v_j+ R(x_j,x_i)\cdot \frac{dv_j}{dt}-\frac{dv_i}{dt},  R(x_j,x_i)\cdot v_j-v_i\Big\rangle
\\
&=
\Big\langle \frac{dR(x_j,x_i)}{dt}\cdot v_j, R(x_j,x_i)\cdot v_j-v_i\Big\rangle\\
&
\quad+\Big\langle  R(x_j,x_i)\cdot \frac{dv_j}{dt}, R(x_j,x_i)\cdot v_j-v_i\Big\rangle
\\&\quad-\Big\langle  \frac{dv_i}{dt}, R(x_j,x_i)\cdot v_j-v_i\Big\rangle =:I^{ij}_1+I^{ij}_2+I^{ij}_3.
\end{align*}

In the sequel, we will obtain estimates for  $I^{ij}_1$, $I^{ij}_2$ and $I^{ij}_3$, separately. We first consider the $I^{ij}_3$ case. From the second equation of \eqref{main}, we can rewrite $I^{ij}_3$ as follows:
\begin{align}
\begin{aligned}\label{eqn 3.3}
I^{ij}_3&=-\Big\langle  \frac{dv_i}{dt},  R_{x_j\shortrightarrow x_i}(v_j)-v_i\Big\rangle\\
&=-\Big\langle  -\frac{\|v_i\|^2}{\|x_i\|^2}x_i+\sum_{k=1}^N\frac{\psi_{ik}}{N}(R_{x_k\shortrightarrow x_i}(v_k)-v_i)
, ~ R_{x_j\shortrightarrow x_i}(v_j)-v_i\Big\rangle.
\end{aligned}
\end{align}
Note that by Proposition \ref{prop 3.1},
\begin{align}\label{eqn 3.1}
x_i\perp v_i.
\end{align}
From the properties of $R_{x_j\shortrightarrow x_i}$ in Lemma \ref{lemma 2.3}, it follows that
\begin{align}\label{eqn 3.2}
 \langle x_i,  R_{x_j\shortrightarrow x_i}(v_j)\rangle= \langle  R_{x_i\shortrightarrow x_j}(x_i), v_j\rangle= \langle  x_j, v_j\rangle=0.
\end{align}
By \eqref{eqn 3.1} and \eqref{eqn 3.2}, we simplify \eqref{eqn 3.3} as follows:
\begin{align*}
I^{ij}_3&=-\Big\langle  \sum_{k=1}^N\frac{\psi_{ik}}{N}(R_{x_k\shortrightarrow x_i}(v_k)-v_i)
,  R_{x_j\shortrightarrow x_i}(v_j)-v_i\Big\rangle.
\end{align*}
We take the absolute value of the above and use the Cauchy and triangle inequalities   to obtain
\begin{align*}
|I^{ij}_3|&=\bigg|\Big\langle  \sum_{k=1}^N\frac{\psi_{ik}}{N}(R_{x_k\shortrightarrow x_i}(v_k)-v_i)
,  R_{x_j\shortrightarrow x_i}(v_j)-v_i\Big\rangle\bigg|\\
%&\leq& \max_{1\leq l,m \leq N } \psi_{lm} \bigg|\Big\langle  \sum_{k=1}^N(R_{x_k\shortrightarrow x_i}(v_k)-v_i),  R_{x_j\shortrightarrow x_i}(v_j)-v_i\Big\rangle\bigg| \\
&\leq  \max_{1\leq l,m \leq N } \psi_{lm} \max_{1\leq k \leq N}|\langle  R_{x_k\shortrightarrow x_i}(v_k)-v_i
,  R_{x_j\shortrightarrow x_i}(v_j)-v_i\rangle|
\\&\leq   \max_{1\leq l,m \leq N } \psi_{lm} \max_{1\leq k \leq N}\| R_{x_k\shortrightarrow x_i}(v_k)-v_i\|~\|
 R_{x_j\shortrightarrow x_i}(v_j)-v_i\|
 \\&\leq   \max_{1\leq l,m \leq N } \psi_{lm} \max_{1\leq k \leq N} (\| R_{x_k\shortrightarrow x_i}(v_k)\|+\|v_i\|)~(\|
 R_{x_j\shortrightarrow x_i}(v_j)\|+\|v_i\|).
 \end{align*}

We note that by Lemma \ref{lemma 2.4}, the rotation operator $R_{x_j\shortrightarrow x_i}$ conserves the modulus of velocities $v_j$.
\begin{align*}\| R_{x_k\shortrightarrow x_i}(v_j)\|=\|v_j\|,~ \mbox{for any}~ i,j,k \in \{1,\ldots,N\}.\end{align*}
Thus, we have
\begin{align*}
|I^{ij}_3|&\leq   \max_{1\leq l,m \leq N } \psi_{lm} \max_{1\leq k \leq N}(\| v_k\|+\|v_i\|)~(\|
 v_j\|+\|v_i\|).
 \end{align*}
By Proposition \ref{prop 2.6}, the velocities have a uniform upper bound: $\|v_i(t)\|\leq \mathcal{V}(t)\leq \sqrt{N\E(0)}$, for any $i\in \{1,\ldots,N\}$. %for $\mathcal{V}$ given in \eqref{eqn:prop262}. %where $\mathcal{V}(t)=\max_{1\leq i\leq N}\|v_i(t)\|$.
Thus, we obtain the desired result as follows:
\begin{align}
\label{eqn:rr31}
|I^{ij}_3|&\leq 4\max_{1\leq l,m \leq N } \psi_{lm}N\E(0).
 \end{align}

For the $|I_2^{ij}|$ case, we use the $|I_3^{ij}|$ result. Since the rotation operator $R_{x_i\shortrightarrow x_j}$ satisfies
\begin{align*}R_{x_i\shortrightarrow x_j}^{-1} = R_{x_i\shortrightarrow x_j}^{T} = R_{x_j\shortrightarrow x_i},\end{align*}
we reduce $I^{ij}_2$ to $I^{ji}_3$ by using the properties of the rotation operator as follows:
\begin{align*}
I^{ij}_2&=\Big\langle  R_{x_j\shortrightarrow x_i}\Big(\frac{dv_j}{dt}\Big),  R_{x_j\shortrightarrow x_i}(v_j)-v_i\Big\rangle = \Big\langle \frac{dv_j}{dt},  R_{x_i\shortrightarrow x_j}\Big( R_{x_j\shortrightarrow x_i}(v_j)-v_i\Big)\Big\rangle\\
&=
\Big\langle  \frac{dv_j}{dt},  v_j-R_{x_i\shortrightarrow x_j}(v_i)\Big\rangle = I^{ji}_3
%\\
%&=& -\Big\langle    R_{x_i\shortrightarrow x_j}(v_i)-v_j, \frac{dv_j}{dt}\Big\rangle .
\end{align*}
Therefore, the previous result \eqref{eqn:rr31} for  $I^{ij}_3$ implies that
\begin{align}
\label{eqn:rr41}
|I^{ij}_2|&\leq 4\max_{1\leq l,m \leq N } \psi_{lm}N\E(0).
 \end{align}

It now remains to obtain an estimate for $I^{ij}_1$:
\begin{align*}
I^{ij}_1=\Big\langle  \frac{dR(x_j,x_i)}{dt}\cdot v_j,  R_{x_j\shortrightarrow x_i}(v_j)-v_i\Big\rangle.
\end{align*}
We again take the absolute value of $I^{ij}_1$ and use the Cauchy inequality and triangle inequality to obtain
\begin{align*}
|I^{ij}_1|&\leq
\Big\| \frac{dR(x_j,x_i)}{dt}\cdot v_j\Big\|~\|R_{x_j\shortrightarrow x_i}(v_j)-v_i\| \leq \Big\| \frac{dR(x_j,x_i)}{dt}\cdot v_j\Big\|\big(\|R_{x_j\shortrightarrow x_i}(v_j)\|+\|v_i\|\big).
\end{align*}
From the properties of the rotation operator and the Cauchy inequality, it follows that
\begin{align*}\|R_{x_j\shortrightarrow x_i}(v_j)\|=\|v_j\| \hbox{ and } \Big\| \frac{dR(x_j,x_i)}{dt}\cdot v_j\Big\|\leq \Big\|\frac{dR(x_j,x_i)}{dt}\Big\|\|v_j\|.\end{align*}
Thus, it holds that
\begin{align*}
|I^{ij}_1|&\leq
 \Big\|\frac{dR(x_j,x_i)}{dt}\Big\|\|v_j\|(\|v_j\|+\|v_i\|) \leq 2 \Big\| \frac{dR(x_j,x_i)}{dt}\Big\|N\E(0).
\end{align*}
Here, we used  the uniform upper bound of velocities in Proposition \ref{prop 2.6}.
By Lemma \ref{lemma 4.3}, we have
\begin{align}
\label{eqn:rr51}
|I^{ij}_1|\leq C(N\E(0))^{3/2} +\frac{C}{\| x_i+x_j\|} (N\E(0))^{3/2},
\end{align}
where $C$ is a positive constant.

Finally, combining \eqref{eqn:rr31}, \eqref{eqn:rr41} and \eqref{eqn:rr51}, we obtain
\begin{align*}
\Big|\frac{d}{dt}\| R_{x_j\shortrightarrow x_i}(v_j)-v_i\|^2\Big|&\leq 2|I^{ij}_1|+2|I^{ij}_2|+2|I^{ij}_3|\\
&\leq
C\Big((N\E(0))^{3/2} +\frac{(N\E(0))^{3/2}}{\| x_i+x_j\|}
+\max_{1\leq i,k\leq N} \psi_{ik}N\E(0)
\Big).
\end{align*}
\end{proof}

We are ready to prove the velocity alignment result in the main theorem.

\begin{theorem}
\label{thm:43}
For $\psi$ satisfying \eqref{eqn:psi} and any given initial data $(x_i(0),v_i(0))_{i=1}^N $ satisfying the admissible condition in \eqref{eqn:adm}, the solution $(x_i,v_i)_{i=1}^N$ to \eqref{main} with $\sigma\geq0$  satisfies
\begin{align}
\label{eqn:43}
\lim_{t \to \infty} \|x_i(t)+x_j(t)\|\| R_{x_j(t)\shortrightarrow x_i(t)}(v_j(t))-v_i(t)\|=0.
\end{align}
%then there exists the unique global solution to \eqref{main} and the solution has time-asymptotic flocking on a sphere (See Definition~\ref{def:flo}).
%For $\frac{\psi_{ij}}{N}=\psi(\|x_i+x_j\|)$, we assume that
%\begin{enumerate}
%\item there is a positive constant $C$ such that $0\leq \psi(a)\leq C a\quad \mbox{for all }~ a \in [0,2]$,\\
%\item $|\psi'|$ is bounded in $[0,2]$.\\
%\end{enumerate}
%Then the solution $(x_i,v_i)_{i=1}^N$ to \eqref{main}  satisfies
% \begin{align*}\lim_{t \to \infty} \sqrt{\psi(\|x_i+x_j\|)} ~\| R_{x_j\shortrightarrow x_i}(v_j)-v_i\|=0.\end{align*}
\end{theorem}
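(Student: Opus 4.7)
The plan is to apply the Barbalat-type Lemma~\ref{lemma 3.3} to the nonnegative function
\begin{equation*}
f_{ij}(t) := \|x_i(t)+x_j(t)\|^{2}\, \|R_{x_j(t)\shortrightarrow x_i(t)}(v_j(t)) - v_i(t)\|^{2},
\end{equation*}
where $f_{ij}(t) := 0$ when $x_i(t) = -x_j(t)$ (consistent with the convention $\|x_1+x_2\|R(x_1,x_2)=0$ in Definition~\ref{mat rot}). Once we show that $f_{ij} \to 0$, the conclusion \eqref{eqn:43} follows immediately since $\|x_i+x_j\|$ and $\|R_{x_j\shortrightarrow x_i}(v_j) - v_i\|$ are uniformly bounded (by $2$ and $2\sqrt{N\E(0)}$ respectively, the latter from Proposition~\ref{prop 2.6} and Lemma~\ref{lemma 2.4}), so $f_{ij}\to 0$ and $\sqrt{f_{ij}}\to 0$ are equivalent.

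The first step is integrability. Integrating the energy identity \eqref{eqn:prop26} of Proposition~\ref{prop 2.6} on $[0,\infty)$ (using that $\E$ is nonnegative) gives
\begin{equation*}
\int_0^\infty \frac{\psi_{ij}(t)}{N^{2}}\,\|R_{x_j\shortrightarrow x_i}(v_j) - v_i\|^{2}\, dt \le \E(0)<\infty.
\end{equation*}
Since $x_i, x_j \in \mathbb{S}^{2}$, Lemma~\ref{lem:psi} yields $\psi_{ij} \geq C^{-1}\|x_i+x_j\|^{2}$, and consequently
\begin{equation*}
\int_0^\infty f_{ij}(t)\, dt \le C N^{2} \E(0) < \infty.
\end{equation*}

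The second step, and the main obstacle, is to show that $|df_{ij}/dt|$ is uniformly bounded on the support $\{t : f_{ij}(t)>0\}$. On this set, $x_i + x_j \neq 0$, so both $R(x_j,x_i)$ and Lemma~\ref{lemma 3.4} are available. By the product rule,
\begin{equation*}
\frac{df_{ij}}{dt} = 2\langle x_i+x_j,\, v_i+v_j\rangle\,\|R_{x_j\shortrightarrow x_i}(v_j) - v_i\|^{2} + \|x_i+x_j\|^{2}\,\frac{d}{dt}\|R_{x_j\shortrightarrow x_i}(v_j) - v_i\|^{2}.
\end{equation*}
The first summand is bounded by $16(N\E(0))^{3/2}$ using $\|x_i+x_j\|\le 2$, $\|v_i\|,\|v_j\|\le \sqrt{N\E(0)}$, and Lemma~\ref{lemma 2.4}. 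For the second summand, Lemma~\ref{lemma 3.4} gives a bound of the form
\begin{equation*}
\Big|\tfrac{d}{dt}\|R_{x_j\shortrightarrow x_i}(v_j) - v_i\|^{2}\Big| \le C\Big((N\E(0))^{3/2} + \tfrac{(N\E(0))^{3/2}}{\|x_i+x_j\|} + \psi_{\max} N\E(0)\Big).
\end{equation*}
The crucial observation is that the $\|x_i+x_j\|^{2}$ prefactor cancels the singular $1/\|x_i+x_j\|$ term; using $\|x_i+x_j\|\le 2$, the whole second summand is bounded by a constant depending only on $\E(0)$, $N$, and $\psi_{\max}$.

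Finally, applying Lemma~\ref{lemma 3.3} to $f_{ij}$ yields $\lim_{t\to\infty} f_{ij}(t) = 0$ for each pair $(i,j)$, and the boundedness argument above converts this to \eqref{eqn:43}. The main technical point throughout is the compatibility between the zeroth-order singularity of the rotation operator at antipodal configurations (which makes Lemma~\ref{lemma 3.4} singular) and the vanishing weight $\|x_i+x_j\|^{2}$; weighting by exactly the right power of $\|x_i+x_j\|$ both neutralizes the singularity in the derivative bound and, via Lemma~\ref{lem:psi}, converts the weighted-$L^{2}$ dissipation into unweighted integrability of $f_{ij}$.
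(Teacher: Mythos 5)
Your proposal is correct and follows essentially the same route as the paper: integrability of a weighted squared relative velocity via the energy dissipation identity of Proposition~\ref{prop 2.6}, a uniform derivative bound on the support in which the quadratic weight cancels the $1/\|x_i+x_j\|$ singularity of Lemma~\ref{lemma 3.4}, and then the Barbalat-type Lemma~\ref{lemma 3.3}. The only (cosmetic) difference is that you weight by $\|x_i+x_j\|^2$ directly, whereas the paper weights by $\psi_{ij}$ and invokes Lemma~\ref{lem:psi} to pass between the two; since these weights are comparable, the arguments are interchangeable.
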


\begin{proof}

%We suppose that $(x_i,v_i)_{i=1}^N$ is the solution to the main system for any given initial data $(x_i(0),v_i(0))_{i=1}^N $ satisfying the admissible condition.  %Let
%\begin{align*}\frac{\psi_{ij}}{N}(t)=\psi(\|x_i(t)+x_j(t)\|).\end{align*}
We recall the identity in Proposition \ref{prop 2.6} for the case of $\sigma=0$.
\begin{align*}
\frac{d\E}{dt}=-
\sum_{i,j=1}^N\frac{\psi_{ij}(t)}{N^2}\| R_{x_j\shortrightarrow x_i}(v_j)-v_i\|^2.
\end{align*}
Taking the integral of the above with respect to time on $(0,t)$ yields
\begin{align}
\begin{aligned}\label{eq 5.2}
N^2\E(t)-N^2\E(0)
&=-\int_0^{t} \sum_{i,j=1}^N\psi_{ij}(\tau)\| R_{x_j(\tau)\shortrightarrow x_i(\tau)}(v_j(\tau))-v_i(\tau)\|^2d\tau.
\end{aligned}
\end{align}

For $f_{ij}(t) := \psi_{ij}(t)\| R_{x_j(t)\shortrightarrow x_i(t)}(v_j(t))-v_i(t)\|^2\geq 0$, since $f_{ij}(t)$ is nonnegative for any $i,j\in \{1,\ldots,N\}$ and $t>0$,
\eqref{eq 5.2} implies that
\begin{align}
\begin{aligned} \label{eqn:53}
\lim_{t\to \infty}\int_0^{t}f_{ij}(\tau)d\tau
\leq \lim_{t\to \infty}\int_0^{t}\sum_{i=1}^N \sum_{j=1}^N f_{ij}(\tau)d\tau = \lim_{t\to \infty}\Big(N^2\E(0)-N^2\E(t)\Big)
\leq N^2\E(0).
\end{aligned}
\end{align}
 Recall that if $x_i(t)+x_j(t)= 0$, we have $f_{ij}(t)=0.$

We assume that  $x_i(t)+x_j(t)\ne 0$ and  take the absolute value of the derivative of $f_{ij}(t)$ with respect to $t$ to obtain
\begin{align*}
\Big|\frac{df_{ij}(t)}{dt}\Big|&= \bigg| \frac{\langle v_i(t)-v_j(t),x_i(t)-x_j(t)\rangle}{\|x_i(t)-x_j(t)\|}\psi'(\|x_i(t)-x_j(t)\|)~ \| R_{x_j(t)\shortrightarrow x_i(t)}(v_j(t))-v_i(t)\|^2\\
&\quad+ \psi_{ij}\frac{d}{dt}\| R_{x_j(t)\shortrightarrow x_i(t)}(v_j(t))-v_i(t)\|^2\bigg|\\
&\leq
\Big|\frac{\langle v_i(t)-v_j(t),x_i(t)-x_j(t)\rangle}{\|x_i(t)-x_j(t)\|}\psi'(\|x_i(t)-x_j(t)\|) ~ \| R_{x_j(t)\shortrightarrow x_i(t)}(v_j(t))-v_i(t)\|^2\Big|\\
&\quad+  \Big|\psi_{ij}\frac{d}{dt}\| R_{x_j(t)\shortrightarrow x_i(t)}(v_j(t))-v_i(t)\|^2\Big| =: K^{ij}_1+K^{ij}_2.
\end{align*}
%where $\psi'(y)=\frac{d\psi(y)}{dy}$.

We estimate $K^{ij}_1$ as follows:
\begin{align*}
K_1^{ij}%&=&\Big| \frac{\langle v_i(t)-v_j(t),x_i(t)+x_j(t)\rangle}{\|x_i(t)+x_j(t)\|}\psi'(\|x_i(t)+x_j(t)\|) \times\| R_{x_j(t)\shortrightarrow x_i(t)}(v_j(t))-v_i(t)\|^2\Big|\\
&\leq
\Big|\frac{\langle v_i(t)-v_j(t),x_i(t)-x_j(t)\rangle}{\|x_i(t)-x_j(t)\|}\Big| ~|\psi'(\|x_i(t)-x_j(t)\|)|~
\|R_{x_j(t)\shortrightarrow x_i(t)}(v_j(t))-v_i(t)\|^2.
\end{align*}
From Proposition~\ref{prop 2.6}, we have
\begin{align*}
\Big|\frac{\langle v_i(t)-v_j(t),x_i(t)-x_j(t)\rangle}{\|x_i(t)-x_j(t)\|}\Big|
%&=& \frac{|\langle v_i(y)+v_j(t),x_i(t)-x_j(t)\rangle|}{\|x_i(t)-x_j(t)\|}
&\leq \| v_i(t)-v_j(t)\|
%&\leq &\frac{\| v_i(t)-v_j(t)\|~\|x_i(t)-x_j(t)\|}{\|x_i(t)-x_j(t)\|}\\
\leq  2\mathcal{V}(t) \leq 2\sqrt{N\E(0)}.
%&\leq&\| v_i(t)\|+\|v_j(t)\|\\
\end{align*}
Similarly, Proposition~\ref{prop 2.6} and Lemma~\ref{lemma 2.4} yield that
\begin{align*}
\| R_{x_j(t)\shortrightarrow x_i(t)}(v_j(t))-v_i(t)\|^2&\leq
(\| R_{x_j(t)\shortrightarrow x_i(t)}(v_j(t))\|+\|v_i(t)\|)^2 = (\| v_j(t)\|+\|v_i(t)\|)^2 \leq 4N\E(0).
\end{align*}
From the assumption on $\psi$, we conclude that
\begin{align}
\label{eqn:61}
K^{ij}_1&\leq 8C(N\E(0))^{3/2}.
\end{align}

Next, we estimate $K^{ij}_2$. %Since there is a positive constant $C$ such that $0\leq \psi(a)\leq C a$ for all $a \in [0,2]$, it holds that
Lemma~\ref{lem:psi} implies that
\begin{align*}
K^{ij}_2&= \Big|\psi(\|x_i(\tau)-x_j(\tau)\|)\frac{d}{d\tau}\| R_{x_j(\tau)\shortrightarrow x_i(\tau)}(v_j(\tau))-v_i(\tau)\|^2\Big|,\\
%&\leq& |\frac{\psi_{ij}}{N}(\|x_i(\tau)+x_j(\tau)\|)|~\Big|\frac{d}{d\tau}\| R_{x_j(\tau)\shortrightarrow x_i(\tau)}(v_j(\tau))-v_i(\tau)\|^2\Big|\\
%&\leq& \frac{|\frac{\psi_{ij}}{N}(\|x_i(\tau)+x_j(\tau)\|)|}{\|x_i(\tau)+x_j(\tau)\|}~\|x_i(\tau)+x_j(\tau)\|\Big|\frac{d}{d\tau}\| R_{x_j(\tau)\shortrightarrow x_i(\tau)}(v_j(\tau))-v_i(\tau)\|^2\Big|,\\
&\leq C~\|x_i(\tau)+x_j(\tau)\|^2\Big|\frac{d}{d\tau}\| R_{x_j(\tau)\shortrightarrow x_i(\tau)}(v_j(\tau))-v_i(\tau)\|^2\Big|.
\end{align*}
%From the assumption $(1)$, we have %implies that there is a positive constant $C>0$ such that
%\begin{align*}\frac{\frac{\psi_{ij}}{N}(\|x_i(\tau)+x_j(\tau)\|)}{\|x_i(\tau)+x_j(\tau)\|}\leq C\end{align*}
On the other hand, Lemma \ref{lemma 3.4} and $\|x_i(\tau)+x_j(\tau)\| \leq 2$ yield
\begin{align*}
&\|x_i(\tau)+x_j(\tau)\|   ~\Big|\frac{d}{dt}\| R_{x_j\shortrightarrow x_i}(v_j)-v_i\|^2\Big|
\\&\leq
C\|x_i(\tau)+x_j(\tau)\|\Big((N\E(0))^{3/2} +\frac{ (N\E(0))^{3/2}}{\| x_i(\tau)+x_j(\tau)\|}
+\max_{1\leq i,k\leq N} \psi_{ik}N\E(0)
\Big)\\&\leq
 C_1
.
\end{align*}
Thus, it follows from the above estimates that
\begin{align}
\label{eqn:62}
K^{ij}_2&\leq 2 C C_1.
\end{align}
From \eqref{eqn:61} and \eqref{eqn:62}, for any $t>0$, it holds that
\begin{align}
\label{eqn:54}
f(t)=0\quad  \hbox{ or } \quad \Big|  \frac{df_{ij}(t)}{dt}\Big|\leq C
\end{align}
where $C$ is a positive constant.

From \eqref{eqn:53} and \eqref{eqn:54}, we can apply Lemma \ref{lemma 3.3} to $f_{ij}(t)$ to obtain that
\begin{align*}\lim_{t\to \infty }\psi(\|x_i(t)-x_j(t)\|)\| R_{x_j(t)\shortrightarrow x_i(t)}(v_j(t))-v_i(t)\|^2=0.\end{align*}
From Lemma~\ref{lem:psi}, we conclude \eqref{eqn:43}.
\end{proof}

%\dk{Lemma 3.4, Proposition 3.5 and Theorem 3.7 in the second paper are moved here.}

Next, we focus on the flocking theorem. As a direct consequence of Proposition~\ref{prop 2.6}, we have the following inequalities.

\begin{lemma}
\label{lem:bddo}
Let $(x_i,v_i)_{i=1}^N$ be the solution to \eqref{main} and $\sigma >0$. For $t>0$ and $1 \leq i,j \leq N$, it holds that
\begin{align}
\label{eqn:bddo}
\|v_i(t) \|^2 \leq  N\E(0)  \quad\hbox{ and } \quad \|x_i(t) - x_j(t)\|^2 \leq \frac{2N^2 \E(0)}{\sigma}.
\end{align}
\end{lemma}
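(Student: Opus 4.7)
The plan is to derive both inequalities directly from the monotonicity of the total energy $\E$, which is already established in Proposition~\ref{prop 2.6}. Since $\psi_{ij}\geq 0$ for all $i,j$, the identity
\begin{align*}
\frac{d\E}{dt}=-\sum_{i,j=1}^N\frac{\psi_{ij}}{N^2}\|R_{x_j\shortrightarrow x_i}(v_j)-v_i\|^2
\end{align*}
in \eqref{eqn:prop26} is nonpositive, so $\E(t)\leq \E(0)$ for all $t\geq 0$. From here the task reduces to isolating each individual term on the left-hand side of the definition \eqref{eqn:e} of $\E$.

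First I would note that both components $\E_K$ and $\E_C$ are nonnegative by construction (the latter because $\sigma>0$ and it is a sum of squared norms). Therefore $\E_K(t)\leq \E(t)\leq \E(0)$ and $\E_C(t)\leq \E(t)\leq \E(0)$ separately. For the velocity bound, one has for any fixed index $i$
\begin{align*}
\|v_i(t)\|^2 \leq \sum_{k=1}^N \|v_k(t)\|^2 = N\,\E_K(t)\leq N\E(0),
\end{align*}
which is exactly the first inequality in \eqref{eqn:bddo}. For the position bound, for any fixed pair $(i,j)$ one picks out a single term of the configuration sum:
\begin{align*}
\|x_i(t)-x_j(t)\|^2 \leq \sum_{k,l=1}^N \|x_k(t)-x_l(t)\|^2 = \frac{2N^2}{\sigma}\,\E_C(t)\leq \frac{2N^2\E(0)}{\sigma},
\end{align*}
which is the second inequality in \eqref{eqn:bddo}.

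There is essentially no obstacle here: the lemma is a bookkeeping corollary of Proposition~\ref{prop 2.6} together with the splitting $\E=\E_K+\E_C$ into nonnegative summands. The only mild subtlety is recognizing that the factor $\sigma/(2N^2)$ in the definition of $\E_C$ converts the pairwise squared distance into the stated $2N^2\E(0)/\sigma$ bound after bounding one term of the double sum by the whole sum. Note in particular that $\sigma>0$ is used only to divide by $\sigma$ in the position estimate; the velocity bound actually holds for $\sigma\geq 0$ as well.
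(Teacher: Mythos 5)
Your proof is correct and matches the paper's intent exactly: the paper states this lemma without a written proof, calling it ``a direct consequence of Proposition~\ref{prop 2.6},'' and your argument (energy monotonicity, nonnegativity of $\E_K$ and $\E_C$, then bounding a single term of each sum by the whole sum) is precisely the bookkeeping that justifies that claim. Nothing to add.
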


\begin{proposition}
\label{prop:lip}
Let $(x_i,v_i)_{i=1}^N$ be the solution to \eqref{main}. If
\[2\sigma > N^2\E(0),\]
 then $\displaystyle\frac{d}{dt}v_i$ and $\displaystyle \frac{d}{dt}R_{x_j\shortrightarrow x_i}(v_j)$ are  bounded.
\end{proposition}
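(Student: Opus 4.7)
The plan is to assemble the a priori bounds already in hand, use the hypothesis $2\sigma > N^2\E(0)$ to rule out antipodal configurations, and then estimate each term in $\dot v_i$ and in $\frac{d}{dt}R_{x_j\shortrightarrow x_i}(v_j)$ directly from the governing equation.

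First I would collect the pointwise a priori controls. By Proposition~\ref{prop 3.1}, $\|x_i(t)\|\equiv 1$ and $\langle v_i(t),x_i(t)\rangle\equiv 0$; by Proposition~\ref{prop 2.6} (equivalently Lemma~\ref{lem:bddo}), $\|v_i(t)\|\leq \sqrt{N\E(0)}$ and $\|x_i(t)-x_j(t)\|^2\leq 2N^2\E(0)/\sigma$. The crucial consequence of the hypothesis $2\sigma>N^2\E(0)$ is that $\|x_i-x_j\|^2<4$, whence
\[
\|x_i(t)+x_j(t)\|^2 \;=\; 4-\|x_i(t)-x_j(t)\|^2 \;\geq\; 4-\frac{2N^2\E(0)}{\sigma} \;=:\; c_0\;>\;0
\]
uniformly in $t$ and in $i,j$. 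Thus no two agents ever approach antipodal positions, the rotation matrix $R(x_j,x_i)$ is genuinely defined at every time, and the singular factor $1/\|x_i+x_j\|$ appearing in Lemma~\ref{lemma 4.3} is uniformly controlled by $1/\sqrt{c_0}$.

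Next I would bound $\dot v_i$ term by term from the second equation of \eqref{main}. The centripetal term satisfies $\|\|v_i\|^2 x_i/\|x_i\|^2\| = \|v_i\|^2 \leq N\E(0)$. For the alignment term, the coefficient $\psi_{ij}=\psi(\|x_i-x_j\|)$ is bounded by $\|\psi\|_{L^\infty[0,2]}$, and Lemma~\ref{lemma 2.4} together with the speed bound gives $\|R_{x_j\shortrightarrow x_i}(v_j)-v_i\|\leq 2\sqrt{N\E(0)}$. For the bonding term, since $\|x_i\|=\|x_k\|=1$, each summand has norm at most $2$, so the whole sum is controlled by $2\sigma$. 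Summing these gives a time-independent bound on $\|\dot v_i(t)\|$.

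Finally, for $\frac{d}{dt}R_{x_j\shortrightarrow x_i}(v_j) = \frac{dR(x_j,x_i)}{dt}\cdot v_j + R(x_j,x_i)\cdot \dot v_j$, I would apply Lemma~\ref{lemma 4.3} with $\mathcal V_{\max}=\sqrt{N\E(0)}$ to the first term, using the uniform lower bound $\|x_i+x_j\|\geq \sqrt{c_0}$ to kill the singular factor, and bound the second term using that $R(x_j,x_i)$ is orthogonal (Lemma~\ref{lemma 2.4}) together with the boundedness of $\dot v_j$ just established. The main (and only mildly nontrivial) obstacle here is verifying that $\|x_i+x_j\|$ stays uniformly away from zero, which is exactly where the threshold hypothesis $2\sigma>N^2\E(0)$ enters; once that is secured, everything else is a straightforward combination of the already-proven estimates.
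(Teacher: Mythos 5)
Your proposal is correct and follows essentially the same route as the paper: both arguments hinge on using $\|x_i - x_j\|^2 \leq 2N^2\E(0)/\sigma < 4$ to bound $\|x_i+x_j\|$ (equivalently $1+\langle x_i,x_j\rangle$) uniformly away from zero, and then estimate each term of $\dot v_i$ and of the product rule for $\tfrac{d}{dt}R_{x_j\shortrightarrow x_i}(v_j)$ using the a priori bounds from Proposition~\ref{prop 3.1} and Lemma~\ref{lem:bddo}. The only cosmetic difference is that you invoke Lemma~\ref{lemma 4.3} to control $\bigl\|\tfrac{dR(x_j,x_i)}{dt}\bigr\|$, whereas the paper re-expands that derivative inline; the underlying estimate is identical.
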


\begin{proof}
%We show that $(v_i)_{i=1}^N$ are uniformly Lipschitz. It is enough to check that each term of $\dot{v}_i$ in \eqref{main} is uniformly bounded.

Since $x_i \in \mathbb{S}^2$, by Lemma~\ref{lem:bddo},
$\displaystyle -\frac{\|v_i\|^2}{\|x_i\|^2}x_i$  is  bounded.
The orthogonality of the rotation operator in Lemma~\ref{lemma 2.3} and Lemma~\ref{lem:bddo} imply that
\begin{align*}
\| R_{x_j\shortrightarrow x_i}(v_j) - v_i \| \leq   \| v_j \| +  \|v_i \| \leq 2\sqrt{N\E(0)}.
\end{align*}
As $\psi_{ij}$ is bounded, we conclude that the second term of $\dot{v}_i$ in \eqref{main} is bounded. The boundedness of the last term follows from $x_i \in \mathbb{S}^2$.

Next, we prove that $\displaystyle \frac{d}{dt}R_{x_j\shortrightarrow x_i}(v_j)$ is bounded. By the chain rule, it holds that
\begin{align}
\label{eqn:lip21}
\frac{d}{dt}(R_{x_j\shortrightarrow x_i}(v_j)) = R_{x_j\shortrightarrow x_i}\frac{d}{dt}v_j + v_j\frac{d}{dt}R_{x_j\shortrightarrow x_i}.
\end{align}
Similar to $v_i$, the first term in \eqref{eqn:lip21} is uniformly bounded. From the direct computation of the second term in \eqref{eqn:lip21}, it follows that
\begin{align}\begin{aligned}
\label{eqn:lip22}
\frac{d}{dt}(R_{x_j\shortrightarrow x_i}) &=
\frac{d}{dt} \left\{\langle x_{j},x_{i}\rangle  I - x_{j} x_{i}^T + x_{i} x_{j}^T \right\} + \frac{1}{1 +  \langle x_{j},x_{i}\rangle } \frac{d}{dt}  \left\{ (x_{j} \times x_{i})(x_{j} \times x_{i})^T \right\} \\
&\quad + \frac{d}{dt} \left( \frac{1}{1 +   \langle x_{j},x_{i}\rangle } \right) (x_{j} \times x_{i})(x_{j} \times x_{i})^T.
\end{aligned}
\end{align}
The uniform boundedness of $(x_i,v_i)_{i=1}^N$ shows that of the first term in the right hand side of \eqref{eqn:lip22}.

Moreover, by Lemma~\ref{lem:bddo}, we have
\begin{align*}
1+ \langle x_{i},x_{j}\rangle  = \frac{1}{2}\|x_i + x_j\|^2 > 2 - \frac{N\E(0)}{\sigma} > 0.
\end{align*}
Combining this with uniform boundedness of $(x_i,v_i)_{i=1}^N$, we conclude that the second term and the third term in \eqref{eqn:lip22} are uniformly bounded. Therefore,  $\displaystyle \frac{d}{dt}R_{x_j\shortrightarrow x_i}(v_j)$ is bounded.
\end{proof}

\begin{theorem}\label{thm 4.8}Let $(x_i,v_i)_{i=1}^N$ be the solution to \eqref{main} and $\psi$ satisfy \eqref{eqn:psi}. If $2\sigma > N^2\E(0)$, then \eqref{main} has time-asymptotic flocking on a unit sphere.
\end{theorem}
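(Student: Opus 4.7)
The plan is to combine Theorem \ref{thm:43}, which already gives the velocity alignment part of Definition \ref{def:flo}, with a quantitative lower bound on $\|x_i(t)+x_j(t)\|$ that follows from the configuration energy control in Lemma \ref{lem:bddo}. Since Theorem \ref{thm:43} is proved for arbitrary $\sigma\geq 0$, the only additional work under the hypothesis $2\sigma>N^2\E(0)$ is the antipodal avoidance clause.

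First I would note that each agent lies on the unit sphere by Proposition \ref{prop 3.1}, so $\|x_i(t)\|=\|x_j(t)\|=1$. The parallelogram identity then yields
\begin{equation*}
\|x_i(t)+x_j(t)\|^2 + \|x_i(t)-x_j(t)\|^2 = 2\|x_i(t)\|^2 + 2\|x_j(t)\|^2 = 4.
\end{equation*}
Combining this identity with the spatial diameter estimate from Lemma \ref{lem:bddo}, namely $\|x_i(t)-x_j(t)\|^2\leq 2N^2\E(0)/\sigma$, I obtain
\begin{equation*}
\|x_i(t)+x_j(t)\|^2 \geq 4 - \frac{2N^2\E(0)}{\sigma},
\end{equation*}
uniformly in $t\geq 0$ and in $i,j$. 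The assumption $2\sigma>N^2\E(0)$ is precisely what makes the right-hand side strictly positive, so
\begin{equation*}
\liminf_{t\geq 0}\min_{1\leq i,j\leq N}\|x_i(t)+x_j(t)\| \geq \sqrt{4 - \tfrac{2N^2\E(0)}{\sigma}}\, >\, 0,
\end{equation*}
which is the antipodal avoidance part of Definition \ref{def:flo}.

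For the velocity alignment, Theorem \ref{thm:43} applies directly (it is stated for $\sigma\geq 0$ and requires only the admissibility of the initial data and \eqref{eqn:psi}), giving
\begin{equation*}
\lim_{t\to\infty}\max_{1\leq i,j\leq N}\|x_i(t)+x_j(t)\|\,\|R_{x_j(t)\shortrightarrow x_i(t)}(v_j(t))-v_i(t)\|=0.
\end{equation*}
Together with the uniform lower bound on $\|x_i+x_j\|$ just established, both conditions of Definition \ref{def:flo} are satisfied, proving time-asymptotic flocking. No delicate step is expected here: the entire heavy lifting (the Barbalat-type argument, the rotation derivative bound, and the energy dissipation) has already been done in Proposition \ref{prop 2.6}, Lemma \ref{lemma 3.3}, Lemma \ref{lemma 3.4}, and Theorem \ref{thm:43}, and Lemma \ref{lem:bddo} is what converts the bonding energy into a geometric separation from antipodal configurations.
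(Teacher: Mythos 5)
Your proof is correct, and it is a somewhat more economical route than the one the paper takes. You establish the antipodal avoidance exactly as the paper implicitly does: Lemma~\ref{lem:bddo} gives $\|x_i-x_j\|^2\leq 2N^2\E(0)/\sigma<4$, and the parallelogram identity on $\mathbb{S}^2$ converts this into the uniform lower bound $\|x_i+x_j\|^2\geq 4-2N^2\E(0)/\sigma>0$. For the alignment clause you then simply cite Theorem~\ref{thm:43}, whose conclusion is verbatim the velocity-alignment condition of Definition~\ref{def:flo} (the maximum over finitely many pairs causes no issue). The paper instead does not reuse Theorem~\ref{thm:43}: it exploits the same diameter bound to deduce the uniform positive lower bound $\psi_{ij}/N\geq\psi(\sqrt{2N^2\E(0)/\sigma})=:C_\psi>0$, feeds this into the energy identity of Proposition~\ref{prop 2.6} to get integrability of the \emph{unweighted} quantity $\sum_{i,j}\|R_{x_j\shortrightarrow x_i}(v_j)-v_i\|^2$, and reruns the Barbalat-type Lemma~\ref{lemma 3.3} (using Proposition~\ref{prop:lip} for the derivative bound) to conclude that this quantity itself tends to zero. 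That conclusion is formally stronger than the weighted limit in Definition~\ref{def:flo}, though the two are equivalent once antipodal avoidance is in hand; your version buys brevity by reusing already-proved machinery, while the paper's version makes explicit that the relative velocities themselves, not just their products with $\|x_i+x_j\|$, decay. Both arguments are complete and rest on the same ingredients (Proposition~\ref{prop 2.6}, Lemma~\ref{lem:bddo}, and the Barbalat-type lemma).
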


\begin{proof}
From the assumption on $\psi$ and Lemma \ref{lem:bddo},
\[\sup_{t\geq 0}\max_{1\leq i,j\leq N }\|x_i(t) - x_j(t)\|<2,\]
and
\[\frac{\psi_{ij}}{N}>\psi(\sqrt{2N^2 \E(0)/\sigma})=:C_{\psi},\]
for any $i,j\in\{1,\ldots,N\}$ and $t\geq 0$. By Proposition~\ref{prop 2.6}, it holds that
\begin{align*}
C_{\psi} \int_0^t   \| R_{x_j(s) \rightarrow x_i(s)}(v_j(s))-v_i(s)\|^2 ds \leq \E(0) \quad \hbox{ for all } t \in [0, \infty).
\end{align*}
Therefore, $ \sum_{i,j=1}^{N} \| R_{x_j(t) \rightarrow x_i(t)}(v_j(t))-v_i(t)\|^2$ is integrable in $[0, \infty)$.

As $\displaystyle\frac{d}{dt}v_i$ and $\displaystyle\frac{d}{dt}R_{x_j\shortrightarrow x_i}(v_j)$ are  bounded in $[0, \infty)$ for any $i,j\in \{1,\ldots,N\}$ from Lemma~\ref{lem:bddo} and Proposition~\ref{prop:lip}, $ \displaystyle \sum_{i,j=1}^{N} \frac{d}{dt}\| R_{x_j(t) \rightarrow x_i(t)}(v_j(t))-v_i(t)\|^2$ is bounded. From Lemma~\ref{lemma 3.3}, we conclude that
\begin{align*}
\sum_{i,j=1}^{N} \| R_{x_j(t) \rightarrow x_i(t)}(v_j(t))-v_i(t)\|^2\rightarrow 0 \quad \hbox{ as }\quad  t \rightarrow \infty.
\end{align*}
\end{proof}

\section{Conclusion}
\setcounter{equation}{0}
 In this paper, we consider a C-S type flocking model on the unit sphere $\bbs^2$. To derive the flocking model on the unit sphere, we introduced the rotation operator $R$. The rotation operator  has the modulus conservation property, but singularity occurs at antipodal points. Therefore, in order to cancel such singularity, we assumed that $\phi$ vanished at antipodal points. Moreover, we introduce a centripetal force term to obtain  conservation of the modulus, $\|x_i\|\equiv 1$. For the velocity alignment, we employed energy dissipation and a Barbalat's lemma type argument. To define flocking on a spherical surface, in addition to velocity alignment, we consider antipodal points avoidance that the positions of two agents are not located at antipodal points, which corresponds to the boundedness of the relative position of the flocking definition in the flat space. However, it is difficult to control the relative position by the geometric properties of the spherical surface. Therefore, in addition to relative velocity, a bonding force term is added. Using the bonding force term and initial conditions, we controlled the relative positions of the particles.

  \appendix

\section{Proof of Lemma ~\ref{lemma 3.3}}\label{ap:0}
In this section, we present the proof of Lemma~\ref{lemma 3.3}.
%From the integrability of $f$, it follows that the following $a_n$ goes to zero.
%\begin{align*}a_n=\int_n^{n+1} f(x)dx\to 0.\end{align*}
Suppose that \eqref{eqn:2lem33} does not hold. Then
\begin{align*}\limsup_{\tau\to \infty} f(\tau) > 0,\end{align*}
%Note that by the elementary property,
and there is a sequence of real numbers $\{b_n\}$ such that $b_n \to \infty$ as $n\to \infty$ and
\begin{align*}\lim_{n\to \infty} f(b_n) = d\end{align*}
for some $d>0$.
We can choose $b_n$ on the support $\{t\in [a,\infty):f(t)>0\}$ of $f$ and %, i,e. $f(b_n)>0$.
there is $N\in \bbn$ such that for $n>N$,
\begin{align*}f(b_n)>\frac{d}{2}.\end{align*}
%Since there is a constant $C>0$ such that
%\begin{align*}\Big|\frac{df}{dt}\Big|<C, \quad \mbox{on}~ \{t\in [a,\infty):f(t)>0\},\end{align*}
From \eqref{eqn:11lem33}, it holds that %we have the following lower bound of $f$:
\begin{align}\label{eq 2.8}
0<-C|\tau-b_n|+f(b_n) <f(\tau) \mbox{ for all } n>N  \hbox{ and } \tau \in \left(b_n - \frac{d}{2C},~ b_n + \frac{d}{2C} \right).
\end{align}
%If not, then by the mean value theorem, there is  a point $\tau>0$ such that \begin{align*}\Big|\frac{df(\tau)}{dt}\Big|\geq C,\end{align*}
%but this is a contradiction.
By replacing $\{b_n\}$ with its subsequence if necessary,
%\begin{align}
%\label{eqn:lem3313}
%b_{n+1}-b_n>\frac{d}{C},
%\end{align}
we assume that
\begin{align}
\label{eqn:lem3313}
b_{n+1}-b_n>\frac{d}{C}.
\end{align}
%For the given sequence $\{b_n\}$ and $f\geq 0$,

We define a sequence $\{a_n\}$ such that
\begin{align*}a_n:=\int_{b_n-\frac{d}{2C}}^{b_{n+1}-\frac{d}{2C}} f(\tau)d\tau.\end{align*}
Since $f$ is nonnegative and $\{b_n\}$ satisfies \eqref{eqn:lem3313}, we have
\begin{align*}a_n=\int_{b_n-\frac{d}{2C}}^{b_{n+1}-\frac{d}{2C}} f(\tau)d\tau>\int_{b_n-\frac{d}{2C}}^{b_n+\frac{d}{2C}}f(\tau)d\tau,\end{align*}
and  we obtain the lower bound of $a_n$ by using \eqref{eq 2.8}.
\begin{align*}a_n>\int_{b_n-\frac{d}{2C}}^{b_n+\frac{d}{2C}}\Big(-C|\tau-b_n|+f(b_n)\Big)d\tau=\frac{4 f(b_n)d-d^2}{4C}>\frac{d^2}{4C}.\end{align*}
By  definition of $\{a_n\}$, the following holds.
\begin{align*}\lim_{b\to \infty}\int_a^b f(\tau)d\tau=\sum a_n> \lim_{n\to \infty} \frac{d^2}{4C}n=\infty,\end{align*}
which contradicts \eqref{eqn:1lem33} and thus we conclude \eqref{eqn:2lem33}.
%This is a contradiction, so we can conclude that
%\begin{align*}\lim_{\tau\to \infty} f(\tau)=0.\end{align*}

\section{Proof of Lemma~\ref{lemma 4.3}}
\label{ap:1}

In this section, we prove Lemma~\ref{lemma 4.3}. First, we show the following elementary estimates.

\begin{lemma}\label{aux lemma for dR}
%Let $\dot{x}_i=v_i$, $x_i+x_j \ne0$ for any $i, j \in \{1,2, \cdots, N\}$.
For any $(x_i,v_i)_{i=1}^N$ satisfying $\dot{x}_i=v_i$ for all $i \in \{1,2, \cdots, N\}$, we assume that
\begin{align*}\|x_i\|=1,\quad \langle v_i,x_i\rangle=0,\quad  \|v_i\|\leq \mathcal{V}_{max} \hbox{ and } x_i + x_j \ne 0 \hbox{ for any } i,j \in \{1,2, \cdots, N\}.\end{align*}
Then, the following estimate holds:
\begin{align}
\label{eqn:dr10}
\bigg\|\frac{d}{dt}\bigg[\frac{1}{1+\langle x_i,x_j\rangle}(x_i\times x_j)(x_i\times x_j)^T\bigg]\bigg\|\leq \frac{C}{\| x_i+x_j\|} \mathcal{V}_{max},
\end{align}
for a positive constant $C$.
\end{lemma}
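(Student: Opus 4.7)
The plan is to apply the product rule to the matrix-valued function
\[F := \frac{1}{1+\langle x_i, x_j\rangle}(x_i \times x_j)(x_i \times x_j)^T,\]
splitting $\frac{dF}{dt}$ into the contribution where the derivative hits the scalar factor $(1+\langle x_i,x_j\rangle)^{-1}$ (``piece I'') and the contribution where it hits the rank-one matrix $(x_i\times x_j)(x_i\times x_j)^T$ (``piece II''). Each piece will be controlled by exploiting two elementary identities that rewrite the scalar quantities $1+\langle x_i,x_j\rangle$ and $\|x_i \times x_j\|$ in terms of powers of $\|x_i+x_j\|$.

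Since $x_i,x_j \in \mathbb{S}^2$, one has $1+\langle x_i, x_j\rangle = \tfrac{1}{2}\|x_i + x_j\|^2$ and
\[\|x_i \times x_j\|^2 = (1-\langle x_i,x_j\rangle)(1+\langle x_i,x_j\rangle) = \tfrac{1}{4}\|x_i-x_j\|^2\|x_i+x_j\|^2,\]
so the denominator $(1+\langle x_i,x_j\rangle)^{-1}$ contributes a factor of order $\|x_i+x_j\|^{-2}$ and $\|x_i \times x_j\|$ is bounded by $\|x_i+x_j\|$ (using $\|x_i-x_j\|\le 2$). Piece II has the symmetric form $(v_i\times x_j + x_i\times v_j)(x_i\times x_j)^T + (x_i\times x_j)(v_i\times x_j + x_i\times v_j)^T$; its operator norm is bounded by $4\mathcal{V}_{max}\|x_i \times x_j\| \le 4\mathcal{V}_{max}\|x_i+x_j\|$, so after dividing by $\tfrac{1}{2}\|x_i+x_j\|^2$ one directly obtains the desired bound $C\mathcal{V}_{max}/\|x_i+x_j\|$.

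Piece I is more delicate: it contains $(1+\langle x_i,x_j\rangle)^{-2}$, which is of size $\|x_i+x_j\|^{-4}$, together with $\|x_i\times x_j\|^2 \le \|x_i+x_j\|^2$, yielding a priori only a bound of order $\|x_i+x_j\|^{-2}$. The main obstacle, and the crucial step in the proof, is to recover an extra factor of $\|x_i+x_j\|$ in the numerator. This is done using the tangency hypothesis $\langle v_i, x_i\rangle = 0 = \langle v_j,x_j\rangle$ to rewrite
\[\langle v_i, x_j\rangle + \langle x_i, v_j\rangle = \langle v_i, x_i+x_j\rangle + \langle x_i+x_j, v_j\rangle,\]
from which $|\langle v_i, x_j\rangle + \langle x_i, v_j\rangle| \le 2\mathcal{V}_{max}\|x_i+x_j\|$. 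Combining this estimate with the bounds above reduces the singularity from $\|x_i+x_j\|^{-2}$ to the required $\|x_i+x_j\|^{-1}$, and adding the contributions of pieces I and II yields the lemma with a generic constant $C$.
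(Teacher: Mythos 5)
Your proposal is correct and follows essentially the same route as the paper's proof: split the derivative by the product rule, use $1+\langle x_i,x_j\rangle=\tfrac12\|x_i+x_j\|^2$ and $\|x_i\times x_j\|\le\|x_i+x_j\|$ to control the rank-one piece, and use the tangency relations $\langle v_i,x_i\rangle=\langle v_j,x_j\rangle=0$ to rewrite $\langle v_i,x_j\rangle+\langle x_i,v_j\rangle=\langle v_i,x_i+x_j\rangle+\langle x_i+x_j,v_j\rangle$ and gain the extra factor of $\|x_i+x_j\|$ in the scalar-derivative piece. The only differences are cosmetic (you group the four rank-one terms symmetrically and derive $\|x_i\times x_j\|\le\|x_i+x_j\|$ from the Lagrange identity rather than from $x_i\times x_j=(x_i+x_j)\times x_j$).
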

\begin{proof}
We take the time-derivative of the given term. Elementary differentiation shows that
\begin{align*}
&\frac{d}{dt}\bigg(\frac{1}{1+\langle x_i,x_j\rangle}(x_i\times x_j)(x_i\times x_j)^T\bigg)\\
&=\frac{\langle v_i,x_j\rangle+\langle x_i,v_j\rangle}{(1+\langle x_i,x_j\rangle)^2}(x_i\times x_j)(x_i\times x_j)^T
+\frac{1}{1+\langle x_i,x_j\rangle}(v_i\times x_j)(x_i\times x_j)^T
\\
&\qquad+\frac{1}{1+\langle x_i,x_j\rangle}(x_i\times v_j)(x_i\times x_j)^T
+\frac{1}{1+\langle x_i,x_j\rangle}(x_i\times x_j)^T(v_i\times x_j)^T
\\
&\qquad+\frac{1}{1+\langle x_i,x_j\rangle}(x_i\times x_j)^T(x_i\times v_j)^T
\\&=:J_1+J_2+J_3+J_4+J_5.\end{align*}

We claim that
\begin{align}
\label{eqn:dr11}
|J_1|&\leq \frac{C}{\|x_i+x_j\|} \mathcal{V}_{max}
\end{align}
for some constant $C>0$. Note that for any vector $x\in \bbr^3$, $x\times x=0$. Thus the following holds:
\begin{align}\label{eq 3.1}
x\times y  =   (x+y)\times y,~\mbox{for}~ x,y\in\bbr^3.
\end{align}
Moreover, for each $i\in \{1,\ldots,N\}$, the velocity and position are orthogonal, i.e.,
\begin{align}\label{eq 3.2}
\langle v_i,x_i\rangle=0, ~ \mbox{for all $i\in \{1,\ldots,N\}$}.
\end{align}
\eqref{eq 3.1} and \eqref{eq 3.2} yield
\begin{align*}
J_1&=
\frac{\langle v_i,x_j\rangle+\langle x_i,v_j\rangle}{(1+\langle x_i,x_j\rangle)^2}(x_i\times x_j)(x_i\times x_j)^T\\
&=
\frac{\langle v_i,x_j\rangle+\langle x_i,v_j\rangle}{(1+\langle x_i,x_j\rangle)^2}\big((x_i+x_j) \times x_j\big)\big((x_i+x_j)\times x_j\big)^T
\\
&=
\frac{\langle v_i,x_j+x_i\rangle+\langle x_i+x_j,v_j\rangle}{(1+\langle x_i,x_j\rangle)^2}\big((x_i+x_j) \times x_j\big)\big((x_i+x_j)\times x_j\big)^T.\end{align*}
The Cauchy inequality and triangle inequality show that
\begin{align*}
\|J_1\|&\leq&
\frac{C}{(1+\langle x_i,x_j\rangle)^2}\Big(\|v_i\|~\|x_i+x_j\|+\|x_i+x_i\|~\|v_j\|\Big)\|x_i+x_j\|~\|x_j\|~\|x_i+x_j\|~\|x_j\|.
\end{align*}
Here, $C$ is a positive constant.  By  assumption,  $x_i$ and $x_j$ are unit vectors  and $v_i$, $v_j$ are bounded such that
\begin{align*}\|v_i\|, \|v_j\|\leq \mathcal{V}_{max}.\end{align*}
 This implies the following estimate for $J_1$:
\begin{align*}
\|J_1\|&\leq
\frac{C}{(1+\langle x_i,x_j\rangle)^2}\Big(\|v_i\|~\|x_i+x_j\|+\|x_i+x_j\|~\|v_j\|\Big)\|x_i+x_j\|~\|x_j\|~\|x_i+x_j\|~\|x_j\|\\
&\leq
\frac{C}{(1+\langle x_i,x_j\rangle)^2}\big(\|v_i\|+\|v_j\|\big)\|x_i+x_j\|^3\\
&\leq
\frac{C}{(1+\langle x_i,x_j\rangle)^2} \mathcal{V}_{max}\|x_i+x_j\|^3.
\end{align*}
Since we have  $\displaystyle 1+\langle x_i,x_j\rangle=\frac{\|x_i+x_j\|^2}{2}$, we obtain the estimate for $J_1$ in \eqref{eqn:dr11}.

Next, we show an estimate for $J_2$ as
\begin{align}
\label{eqn:dr12}
\|J_2\|\leq \frac{C}{\| x_i+x_j\|} \mathcal{V}_{max}.
\end{align}
Here, we take the matrix norm of $J_2$.
By \eqref{eq 3.1},
\begin{align*}
J_2=\frac{1}{1+\langle x_i,x_j\rangle}(v_i\times x_j)(x_i\times x_j)^T=\frac{1}{1+\langle x_i,x_j\rangle}(v_i\times x_j)\big((x_i+x_j)\times x_j\big)^T.
\end{align*}
Taking the matrix norm on the above equation, we obtain
\begin{align*}
\|J_2\|=\Big\|\frac{1}{1+\langle x_i,x_j\rangle}(v_i\times x_j)\big((x_i+x_j)\times x_j\big)^T\Big\|\leq
\frac{C}{1+\langle x_i,x_j\rangle} \|v_i\|~\|x_j\|~\|x_i+x_j\|~\|x_j\|.
\end{align*}
Then, \eqref{eqn:dr12} follows from boundedness of $x_i$, $x_j$, $v_i$ and $v_j$, and the simple fact that
 $$\displaystyle 1+\langle x_i,x_j\rangle=\frac{\|x_i+x_j\|^2}{2}.$$

From the parallel argument, the following estimates hold for $\|J_{3}\|,~\|J_{4}\|$ and $\|J_{5}\|$:
\begin{align}
\label{eqn:dr13}
\|J_{3}\|,~\|J_{4}\|,~\|J_{5}\|\leq \frac{C}{\| x_i+x_j\|} \mathcal{V}_{max}.
\end{align}
We omit the proof of the above estimates. %since the proofs are similar to $\|J_2\|$ case.
By adding \eqref{eqn:dr11}, \eqref{eqn:dr12} and \eqref{eqn:dr13}, we conclude \eqref{eqn:dr10}.
%Thus, we have
%\begin{align*}
%\bigg\|\frac{d}{dt}\bigg[\frac{1}{1+\langle x_i,x_j\rangle}(x_i\times x_j)(x_i\times x_j)^T\bigg]\bigg\|\leq\frac{C}{\| x_i+x_j\|} \mathcal{V}_{max}
%.\end{align*}

\end{proof}

The result in Lemma \ref{aux lemma for dR} is optimal as in the following example.

\begin{example}
Consider
\begin{align*}
x_1(t)&=\left(t^2 \sin \frac{1}{\sqrt{t}},~-t^2 \cos \frac{1}{\sqrt{t}},~-\sqrt{-\left(t^2 \sin \frac{1}{\sqrt{t}}\right)^2-\left(-t^2 \cos \frac{1}{\sqrt{t}}\right)^2+1}\right),\\
x_2(t)&=(0,0,1).
\end{align*}
Then
\begin{align*}
\frac{1}{1+\langle x_i,x_j\rangle}(x_i\times x_j)(x_i\times x_j)^T=\left(
\begin{matrix}
 \left(\sqrt{1-t^4}+1\right) \cos ^2\frac{1}{\sqrt{t}} & \frac{t^4 \sin \frac{2}{\sqrt{t}}}{2-2 \sqrt{1-t^4}} & 0 \\
 \frac{t^4 \sin \frac{2}{\sqrt{t}}}{2-2 \sqrt{1-t^4}} & \left(\sqrt{1-t^4}+1\right) \sin^2\frac{1}{\sqrt{t}} & 0 \\
 0 & 0 & 0
\end{matrix}
\right)
\end{align*}
and
\begin{align*}
&\hspace{-2em}\frac{d}{dt}\bigg[\frac{1}{1+\langle x_i,x_j\rangle}(x_i\times x_j)(x_i\times x_j)^T\bigg]\\&=
\left(\begin{matrix}
 \frac{\left(\sqrt{1-t^4}+1\right) \sin \frac{2}{\sqrt{t}}}{2 t^{3/2}}-\frac{2 t^3 \cos^2\frac{1}{\sqrt{t}}}{\sqrt{1-t^4}} & m(t)   & 0 \\
  m(t)& -\frac{2 t^3\sin^2\frac{1}{\sqrt{t}} }{\sqrt{1-t^4}}-\frac{\left(\sqrt{1-t^4}+1\right) \sin \frac{2}{\sqrt{t}}}{2 t^{3/2}} & 0 \\
 0 & 0 & 0
\end{matrix}\right),\end{align*}
where
\begin{align*}m(t)=\frac{2 t^3 \left(t^4+2 \sqrt{1-t^4}-2\right) \sin \frac{2}{\sqrt{t}}-t^{5/2} \left(t^4+\sqrt{1-t^4}-1\right) \cos \frac{2}{\sqrt{t}}}{2 \sqrt{1-t^4} \left(\sqrt{1-t^4}-1\right)^2}.\end{align*}
Note that $\|x_1(t)+x_2(t)\|=\sqrt{2-2 \sqrt{1-t^4}}=t^2+O(t^{4})$, \begin{align*}\|v_1(t)\|=\frac{1}{2} \sqrt{\frac{t \left(t^4-16 t-1\right)}{t^4-1}}=\frac{\sqrt{t}}{2}+O(t^{\frac{3}{2}}),\end{align*}
and
\begin{align*}\bigg\|\frac{d}{dt}\bigg[\frac{1}{1+\langle x_i,x_j\rangle}(x_i\times x_j)(x_i\times x_j)^T\bigg]\bigg\|=\Big(\frac{1}{t^{\frac{3}{2}}}+O(t^{\frac{3}{2}})\Big)\sin\frac{2}{\sqrt{t}}+\Big(\frac{1}{t^{\frac{3}{2}}}+O(t^{\frac{3}{2}})\Big)\cos\frac{2}{\sqrt{t}}.\end{align*}
Thus, we conclude that the result in Lemma \ref{aux lemma for dR} is optimal:
\begin{align*}
\bigg\|\frac{d}{dt}\bigg[\frac{1}{1+\langle x_i,x_j\rangle}(x_i\times x_j)(x_i\times x_j)^T\bigg]\bigg\|\sim \frac{\mathcal{V}_{max}}{\| x_i+x_j\|}.
\end{align*}
\end{example}

\medskip

\begin{proof}[{\bf Proof of Lemma~\ref{lemma 4.3}}]
%Assume that $x_i+x_j\ne0$ and let $\theta$ be the angle between $x_i$ and $x_j$.
From Definition \ref{mat rot}, the rotation matrix is given by
\begin{align*}
R(x_j,x_i)%&=&\langle x_i,x_j\rangle I+x_i x_j^T-x_j x_i^T +\frac{(1-\cos \theta)}{\sin^2\theta}(x_i\times x_j)(x_i\times x_j)^T\\
&=\langle x_i,x_j\rangle I+x_i x_j^T-x_j x_i^T +\frac{1}{1+\langle x_i,x_j\rangle}(x_i\times x_j)(x_i\times x_j)^T.
\end{align*}
If we take the time-derivative of the above, then
\begin{align*}
\frac{dR(x_j,x_i)}{dt}
&=\langle v_i,x_j\rangle I+\langle x_i,v_j\rangle I
+v_i x_j^T +x_i  v_j^T\\
&-v_j x_i^T -x_j  v_i^T
+\frac{d}{dt}\bigg[\frac{1}{1+\langle x_i,x_j\rangle}(x_i\times x_j)(x_i\times x_j)^T\bigg].
\end{align*}

Taking the matrix norm $\|\cdot\|=\|\cdot\|_{2}$ leads to
\begin{align*}
\Big\|\frac{dR(x_j,x_i)}{dt}\Big\|
&=\Big\|\langle v_i,x_j\rangle I+\langle x_i,v_j\rangle I
+v_i x_j^T +x_i  v_j^T\\
&\quad-v_j x_i^T -x_j  v_i^T
+\frac{d}{dt}\bigg[\frac{1}{1+\langle x_i,x_j\rangle}(x_i\times x_j)(x_i\times x_j)^T\bigg]\Big\|
\\
&\leq
 \|\langle v_i,x_j\rangle I\|+\|\langle x_i,v_j\rangle I\|
+
\|v_i x_j^T \|
+
\|x_i  v_j^T\|
\\
&\quad
+\|v_j x_i^T \|
 +
 \|x_j  v_i^T\|
 +
 \bigg\|\frac{d}{dt}\bigg[\frac{1}{1+\langle x_i,x_j\rangle}(x_i\times x_j)(x_i\times x_j)^T\bigg]\bigg\|
.
\end{align*}
The six terms except for the last term on the right-hand side of the above inequality is bounded by $\mathcal{V}_{max}$:
\begin{align}\label{eq 3.3}
 \|\langle v_i,x_j\rangle I\|,~\|\langle x_i,v_j\rangle I\|,~\|v_i x_j^T \|,~\|x_i  v_j^T\|,~\|v_j x_i^T \|,~ \|x_j  v_i^T\| \leq C \mathcal{V}_{max},
\end{align}
where $C$ is a positive constant.
From  Lemma \ref{aux lemma for dR}, it follows that
\begin{align}\label{eq 3.4}
\bigg\|\frac{d}{dt}\bigg[\frac{1}{1+\langle x_i,x_j\rangle}(x_i\times x_j)(x_i\times x_j)^T\bigg]\bigg\|\leq \frac{C}{\| x_i+x_j\|} \mathcal{V}_{max}.
\end{align}
%Here $C$ is also a constant depending on the spatial dimension.
\eqref{eq 3.3} and \eqref{eq 3.4} imply the desired result.
\end{proof}~
\newline

\section{Equivalence of the flocking definitions}

\begin{lemma}
\label{lem:equ}Let
\begin{align*} f_k(t):= \|x_i(t) + x_j(t) \| \|  R_{x_j(t)\shortrightarrow x_i(t)}(v_j(t)) - v_i(t) \|^k.\end{align*}
Suppose that $v_i$ and $v_j$ are uniformly bounded in time and $x_i, x_j \in \mathbb{S}^2$. Then, the following conditions
\begin{align*}
\lim_{t \rightarrow \infty} f_k(t) = 0
\end{align*}
are equivalent for any $k>0$.
\end{lemma}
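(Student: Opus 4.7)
The plan is to reduce the statement to the standard fact that for two nonnegative bounded quantities $a,b$, the product $ab^k$ vanishing at infinity is insensitive to the choice of $k>0$, provided $a$ and $b$ stay uniformly bounded. Set $a(t):=\|x_i(t)+x_j(t)\|$ and $b(t):=\|R_{x_j(t)\shortrightarrow x_i(t)}(v_j(t))-v_i(t)\|$, so that $f_k(t)=a(t)b(t)^k$. Since $x_i,x_j\in\mathbb{S}^2$, we have $a(t)\le 2$. By Lemma~\ref{lemma 2.4}, the rotation operator is an isometry, hence $\|R_{x_j\shortrightarrow x_i}(v_j)\|=\|v_j\|$; combining with the triangle inequality and the uniform-in-time bound $\|v_i\|,\|v_j\|\le M$ for some $M>0$, we get $b(t)\le 2M$.

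Given these bounds, I would fix $0<k_1<k_2$ and prove the two implications separately. For the easy direction, write
\begin{equation*}
f_{k_2}(t)=a(t)\,b(t)^{k_2}=f_{k_1}(t)\,b(t)^{k_2-k_1}\le (2M)^{k_2-k_1}f_{k_1}(t),
\end{equation*}
so $f_{k_1}(t)\to 0$ forces $f_{k_2}(t)\to 0$. For the reverse direction, which is the only step where the bound on $a$ is essential, I would use the identity
\begin{equation*}
f_{k_1}(t)=a(t)\,b(t)^{k_1}=a(t)^{1-k_1/k_2}\bigl(a(t)\,b(t)^{k_2}\bigr)^{k_1/k_2}\le 2^{1-k_1/k_2}\,f_{k_2}(t)^{k_1/k_2},
\end{equation*}
valid because $0<k_1/k_2<1$ and $a(t)\le 2$, from which $f_{k_2}(t)\to 0$ implies $f_{k_1}(t)\to 0$.

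Finally, the equivalence for arbitrary $k,k'>0$ follows by transitivity: taking $k_1=\min(k,k')$ and $k_2=\max(k,k')$ reduces the general case to the one just treated. There is no genuine obstacle here; the only thing one must be careful about is to invoke Lemma~\ref{lemma 2.4} so that the isometry property yields a uniform bound on $b(t)$ from the assumed uniform bound on the velocities, and to note that the factor $a(t)^{1-k_1/k_2}$ is controlled precisely because the domain is a unit sphere.
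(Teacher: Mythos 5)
Your proposal is correct and follows essentially the same route as the paper: both arguments rest on the two identities $f_{k_2}=b^{k_2-k_1}f_{k_1}$ and $f_{k_1}=a^{1-k_1/k_2}f_{k_2}^{k_1/k_2}$ (the paper writes the latter as $f_m^{1/m}=\|x_i+x_j\|^{1/m-1/k}f_k^{1/k}$), combined with the uniform bounds $a\le 2$ and $b\le\|v_i\|+\|v_j\|$ from Lemma~\ref{lemma 2.4}. No substantive difference.
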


\begin{proof}
Choose $k > m > 0$. As
\begin{align*}
f_k = \|  R_{x_j\shortrightarrow x_i}(v_j) - v_i \|^{k-m} f_m \hbox{ and } f_m^{\frac{1}{m}} = \|x_i + x_j \|^{\frac{1}{m} - \frac{1}{k}} f_k^{\frac{1}{k}}
\end{align*}
hold, it is enough to check that $\|x_i + x_j \|$ and $\|  R_{x_j\shortrightarrow x_i}(v_j) - v_i \|$ are uniformly bounded in time. As $x_i$ and $x_j$ are in $\mathbb{S}^2$, we have $\|x_i + x_j \| \leq 2$. On the other hand, from Lemma~\ref{lemma 2.4},
\begin{align*}
\|  R_{x_j\shortrightarrow x_i}(v_j) - v_i \| \leq \|  R_{x_j\shortrightarrow x_i}(v_j) \| + \|v_i \| = \|v_j\| + \|v_i\|.
\end{align*}
As $v_i$ and $v_j$ are uniformly bounded in time, we conclude.
\end{proof}

\section*{Acknowledgments}
S.-H. Choi is partially supported by NRF of Korea (no. 2017R1E1A1A03070692) and Korea Electric Power Corporation(Grant number: R18XA02).

\bibliographystyle{amsplain}

\end{document}